\newcommand{\mmod}{\! \sslash \!}
\newcommand{\E}[2]{\prescript{#1}{#2}{E}}
\newcommand{\mc}[1]{\mathcal{#1}}
\newcommand{\ul}[1]{\underline{#1}}
\newcommand{\mb}[1]{\mathbb{#1}}
\newcommand{\mr}[1]{\mathrm{#1}}
\newcommand{\abs}[1]{\lvert #1 \rvert}
\newcommand{\bra}[1]{\langle #1 \rangle}
\newcommand{\br}[1]{\overline{#1}}
\newcommand{\td}[1]{\widetilde{#1}}
\newcommand{\ZZ}{\mathbb{Z}}
\newcommand{\QQ}{\mathbb{Q}}
\newcommand{\FF}{\mathbb{F}}
\newcommand{\TMF}{\mathrm{TMF}}
\newcommand{\tmf}{\mathrm{tmf}}
\newcommand{\bo}{\mathrm{bo}}
\def \HF2{\mr{H}\FF_2}
\newcommand{\bou}{\ul{\bo}}
\newcommand{\tmfu}{\ul{\tmf}}
\newcommand{\bp}[1]{\bou_1^{\otimes {#1}}}
 \newtheorem{thm}[equation]{Theorem}
 \newtheorem{cor}[equation]{Corollary}
 \newtheorem{lem}[equation]{Lemma}
 \newtheorem{prop}[equation]{Proposition}
 \newtheorem{conj}[equation]{Conjecture}
 \newtheorem*{thm*}{Theorem}
 \newtheorem*{cor*}{Corollary}
 \newtheorem*{lem*}{Lemma}
 \newtheorem*{prop*}{Proposition}
 \theoremstyle{definition}
 \newtheorem{rmk}[equation]{Remark}
\newtheorem*{defn*}{Definition}
\newtheorem*{ex*}{Example}
\newtheorem*{exs*}{Examples}
\newtheorem*{rmk*}{Remark}
\newtheorem*{claim*}{Claim}
\numberwithin{equation}{section}
\numberwithin{figure}{section}
\DeclareMathOperator{\Ext}{Ext}
\DeclareMathOperator{\Hom}{Hom}
\DeclareMathOperator*{\hocolim}{hocolim}
\DeclareMathOperator{\sq}{Sq}
\newcommand{\s}{\wedge}
\newcommand{\T}{\ul{\TMF_0(3)}}
\newcommand{\piA}{\pi^{A(2)_*}}
\definecolor{DarkBlue}{rgb}{.1, 0.35, 0.6} 
\newcommand{\bl}[1]{#1}
\title{The structure of the $v_2$-local algebraic $\tmf$ resolution}
\author{M.~Behrens}\address{University of Notre Dame}\email{mbehren1@nd.edu}
\author{P.~Bhattacharya}\address{New Mexico State University}\email{prasit@nmsu.edu}
\author{D.~Culver}\address{Max Plank Institute for Mathematics }\email{dculver@mpim-bonn.mpg.de}
\begin{document}

\begin{abstract}
\bl{We give a complete description of the $E_1$-term of the $v_2$-local as well as $g$-local algebraic $\tmf$ resolution. }
\end{abstract}

\maketitle	

\tableofcontents

\section{Introduction}

Let $\bo$ denote the connective real $K$-theory spectrum.
Mahowald and his collaborators used the $\bo$ resolution (aka the $\bo$-based Adams spectral sequence) to study stable homotopy groups to great effect.  Specifically, they computed the image of the $J$-homomorphism \cite{DavisMahowaldJ}, proved the $2$-primary height $1$ telescope conjecture \cite{Mahowald}, \cite{LellmannMahowald}, computed the unstable $v_1$-periodic homotopy groups of spheres \cite{MahowaldEHP}, and applied homotopy theoretic methods to a variety of geometric problems \cite{DavisGitlerMahowald}.

The spectrum $\bo$ has two distinct advantages that lend itself to these applications at the prime $2$.  Firstly, $\pi_0\bo$ is torsion free and $\pi_*\bo$ is Bott periodic (i.e. $v_1$-torsion free), so it is equipped to detect the zeroth and first layers of the chromatic filtration.  Secondly, $v_1$-periodic homotopy at the prime 2 is more complicated than at odd primes, and this is witnessed by the elements $\eta$ and $\eta^2$ generating additional anomalous torsion \cite{Adams}.  These elements and their $v_1$-multiples are detected by the $\bo$-Hurewicz homomorphism
$$ \pi_*^s \to \pi_* \bo. $$

At chromatic height $2$, the $2$-primary stable stems have a vast collection of anomalous torsion, and a significant portion of this $v_2$-periodic torsion is detected by the spectrum $\tmf$ of topological modular forms (see \cite{tmfhi}).
As such the $\tmf$ resolution represents a significant upgrade to the $\bo$ resolution.  Indeed, partial analysis of the $\tmf$ resolution has resulted in numerous powerful results \cite{BHHM}, \cite{BHHM2}, \cite{tmfZ}, \cite{tmfhi}.

For a spectrum $X$, the \emph{tmf resolution} of $X$ is the tower of cofiber sequences
\begin{equation}\label{eq:tmfres}
\xymatrix{
X \ar[d] & \Sigma^{-1}\br{\tmf} \wedge X \ar[d] \ar[l] & \Sigma^{-2}\br{\tmf}^{\wedge 2} \wedge X \ar[d] \ar[l] & 
\cdots \ar[l]
\\
\tmf \wedge X & \Sigma^{-1}\tmf \wedge \br{\tmf} \wedge X & \Sigma^{-2}\tmf \wedge \br{\tmf}^{\wedge 2} \wedge X
}
\end{equation}   
Here $\br{\tmf}$ is the cofiber of the unit
$$ S \to \tmf \to \br{\tmf}. $$
Applying $\pi_*$ to the tower above results in the \emph{$\tmf$-based Adams spectral sequence}
$$ \E{\tmf}{}^{n,t}_1(X) = \pi_{t} (\tmf \wedge \br{\tmf}^{\wedge n} \wedge X) \Rightarrow \pi_{t-n} X.
$$ 

Ultimately, the successful applications of the $\tmf$-resolution so far have been limited by our ability to compute the $E_1$-page of the $\tmf$-based Adams spectral sequence --- computations to date have relied on computations of the $E_1$-page in certain regions.  Unlike the $\bo$ case, we are not able to completely compute this $E_1$ page for $X = S$.  The goal of this paper is to make a significant step towards rectifying this deficiency.

The computations of the $E_1$-page that have been successfully performed used the classical Adams spectral sequence.  We focus our attention at the prime $2$.  Recall that for a connective spectrum $Y$, the \emph{mod $2$ Adams spectral sequence} (ASS) takes the form
$$ \E{ass}{}^{s,t}_2(Y) = \Ext^{s,t}_{A_*}(\FF_2, H_*Y) \Rightarrow \pi_{t-s}Y^{\wedge}_2 $$
where $H_*$ denotes mod $2$ homology and $A_*$ is the dual Steenrod algebra.
The $E_1$-term of the $\tmf$-resolution can then itself be approached by computing the ASS's
$$ \E{ass}{}^{s,t}_2(\tmf \wedge \br{\tmf}^{n} \wedge X) \Rightarrow \pi_{t-s}(\tmf \wedge \br{\tmf}^n \wedge X) = \E{\tmf}{}_1^{n,t-s}(X). $$
In practice, given the computation of the $E_2$-pages, these Adams spectral sequences can be completely computed, as the majority of the differentials can be deduced from the Adams spectral sequence for $\tmf$ (as computed in \cite{BrunerRognes}).  The $\tmf$-resolution can then be studied through the Miller square \cite{Miller}
$$ 
\xymatrix{
	\E{ass}{}^{s,t}_2(\tmf \wedge \br{\tmf}^{n} \wedge X) \ar@{=>}[r]^-{ASS} \ar@{=>}[d]|{\text{alg tmf res}} & \E{\tmf}{}_1^{n,t-s}(X) \ar@{=>}[d]|{\text{tmf res}}
	\\
	\E{ass}{}^{s+n,t}_2(X) \ar@{=>}[r]_{ASS} & \pi_{t-s-n}X^{\wedge}_2
}
$$
Here, the left side of the square is the \emph{algebraic $\tmf$-resolution}, the analog of the $\tmf$-resolution obtained by applying $\Ext_{A_*}$ to (\ref{eq:tmfres}).  The starting point is therefore the computation of the $E_1$-page of the algebraic tmf resolution of the sphere
$$ \E{ass}{}^{s,t}_2(\tmf \wedge \br{\tmf}^{n}). $$

Analogous to the case of the $\bo$-resolution and the $BP\bra{2}$-resolution \cite{Mahowald} \cite{Culver}, we propose the following conjecture.
\begin{conj}
	The map 
	$$ \E{ass}{}^{s,t}_2(\tmf \wedge \br{\tmf}^{n}) \rightarrow v_2^{-1}\E{ass}{}^{s,t}_2(\tmf \wedge \br{\tmf}^{n}) $$
	is injective for $s > 0$. 
\end{conj}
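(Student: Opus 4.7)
The plan is to reduce the conjecture to an algebraic $g$-injectivity statement about Ext over $A(2)_*$, and then attack that via a splitting of $\br{\tmf}\wedge\tmf$. Since $H_*\tmf \cong A_*\square_{A(2)_*}\FF_2$ as an $A_*$-comodule algebra, the K\"unneth formula together with the standard identity $N\otimes(A_*\square_{A(2)_*}L)\cong A_*\square_{A(2)_*}(N\otimes L)$ gives
$$ H_*(\tmf\wedge\br{\tmf}^{\wedge n}) \cong A_* \square_{A(2)_*}(H_*\br{\tmf})^{\otimes n}, $$
so Milnor's change-of-rings identifies
$$ \E{ass}{}^{s,t}_2(\tmf\wedge\br{\tmf}^{\wedge n}) \cong \Ext^{s,t}_{A(2)_*}\!\bigl(\FF_2,\,(H_*\br{\tmf})^{\otimes n}\bigr). $$
Under this identification the topological $v_2$-action becomes multiplication by a fixed class (call it $g$) in $\Ext_{A(2)_*}(\FF_2,\FF_2)$, so it suffices to show that the latter Ext group is $g$-torsion-free in positive cohomological degree.

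Next I would look for a splitting, possibly only up to a manageable filtration, of $\br{\tmf}\wedge\tmf$ as a $\tmf$-module into pieces of the form $\tmf\wedge Y$ where $Y$ is a small finite spectrum whose $A(2)_*$-comodule homology has Ext lying above the Adams vanishing line of \cite{BrunerRognes}, where $g$-periodicity automatically forces $g$-torsion-freeness. This is the height-$2$ analog of Mahowald's classical splitting of $\br{\bo}\wedge\bo$ that was the workhorse behind the $v_1$-injectivity statement at height $1$. Iterating such a splitting produces a filtration of $(H_*\br{\tmf})^{\otimes n}$ whose associated graded is a direct sum of comodules of this tractable form, and one would then run a Cartan--Eilenberg-type spectral sequence to assemble the full Ext, checking at each stage that $g$-injectivity in positive filtration is preserved under the short exact sequences of $A(2)_*$-comodules appearing as subquotients.

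The hard part will be twofold. First, the literal analog of Mahowald's splitting fails for $\tmf$: $\br{\tmf}\wedge\tmf$ does not split on the nose as a wedge of $\tmf$-modules on nice finite complexes, so one has to identify the correct building blocks --- likely involving $\bo$- and Brown--Gitler-style summands, together with small $\tmf$-modules carrying $v_2$-self maps --- and prove the splitting only up to a controlled error term. Second, and more seriously, one must exclude spurious $g$-torsion arising as extensions between the pieces. This extension problem is the delicate step, and it is presumably where the full strength of the explicit $E_1$-page description announced in the abstract becomes essential: once that description is in hand, $g$-injectivity in positive Adams filtration should follow by direct inspection of the answer.
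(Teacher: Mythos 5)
This statement is a \emph{conjecture} in the paper, not a theorem: the authors explicitly leave it open, remarking only that it is ``consistent with computations in low degrees.'' What the paper actually proves is a complete description of the \emph{target} of the localization map, namely $v_2^{-1}\E{ass}{}^{*,*}_2(\tmf\wedge\br{\tmf}^{\wedge n})$, via the splitting of $\br{\tmfu}^{\otimes n}$ into bo-Brown-Gitler comodules and then into suspensions of $v_2^{-1}\FF_2$, $v_2^{-1}\bou_1$, $v_2^{-1}\bou_1^{\otimes 2}$, and $\ul{\TMF_0(3)}$. Injectivity into that target remains unproved.

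Your proposal is a plan rather than a proof, and you say as much: you explicitly flag the two hard steps (finding the correct $\tmf$-module splitting of $\br{\tmf}\wedge\tmf$ up to controlled error, and ruling out spurious torsion arising from extensions between pieces) and do not resolve either one. That means the proposal does not establish the statement. Two further issues worth flagging. First, a naming collision: you call the $v_2$-power ``$g$,'' but in this paper $g=h_{2,1}^4\in\pi^{A(2)_*}_{20,4}(\FF_2)$ is a different element from $v_2^8\in\pi^{A(2)_*}_{48,8}(\FF_2)$; the conjecture is about $v_2$-injectivity, and conflating the two would lead to a genuinely different (and also open) statement. Second, the reduction you carry out correctly --- the change-of-rings to $\Ext_{A(2)_*}(\FF_2,(H_*\br{\tmf})^{\otimes n})$ and the decomposition into Brown-Gitler-type pieces --- is exactly the framework the paper already sets up in Section~\ref{sec:BrownGitler} and uses to compute the localization; the content of the conjecture is precisely the part your plan postpones. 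So the honest assessment is that you have reproduced the paper's setup and correctly identified where the difficulty lies, but have not supplied a proof, and indeed no proof is known.
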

This conjecture is consistent with computations in low degrees (see, for instance, \cite{BOSS}).  It implies a good-evil decomposition of the $\tmf$-resolution of the sphere, analogous to that of \cite{boASS}, \cite{tmfZ}.

In this paper we give a complete computation of 
$$ v_2^{-1}\E{ass}{}^{*,*}_2(\tmf \wedge \br{\tmf}^{n}). $$
We now summarize the main results.

For a graded Hopf algebra $\Gamma$ over $k$, let $\mc{D}_{\Gamma}$ denote Hovey's stable homotopy category of $\Gamma$-comodules.  Briefly, $\mc{D}_{\Gamma}$ is similar to the derived category, with the chief difference that weak equivalences are defined to be the $\pi^{\Gamma}_{*,*}$-isomorphisms, where for a $\Gamma$-comodule $M$, the homotopy groups $\pi^{\Gamma}_{*,*}$ are defined to be
$$ \pi^{\Gamma}_{n,s}(M) := \Ext^{s, s+n}_{\Gamma}(k, M). $$
For $M \in \mc{D}_{\Gamma}$, we let $\Sigma^{n,s}M$ denote a shift in internal degree by $s+n$ and in cohomological degree by $s$, so we have
$$ \pi^{\Gamma}_{k,l}(\Sigma^{n,s}M) = \pi^{\Gamma}_{k-n, l-s}(M) $$
and
$$ [\Sigma^{n,s}k, M]_{\Gamma} = \pi^{\Gamma}_{n,s}(M). $$
Note that with our conventions $\Sigma^n = \Sigma^{n,0}$, and exact triangles in $\mc{D}_\Gamma$ take the form
$$ A \to B \to C \to \Sigma^{1,-1} A. $$ 
For a spectrum $X$, we shall let
$$ \ul{X} \in \mc{D}_{A_*} $$
denote the object associated to the mod $2$ homology $H_*X$.
In this notation the ASS takes the form
$$ \E{ass}{}^{s,t}_2(X) = \pi^{A_*}_{t-s,s}(\ul{X}) \Rightarrow \pi_{t-s}X^{\wedge}_2. $$ 

Since $\ul{\tmf} = (A \mmod A(2))_*$ \cite{Mathew} (where $A(2)$ is the subalgebra of the mod $2$ Steenrod algebra generated by $\sq^1$, $\sq^2$, and $\sq^4$), we have a change of rings isomorphism
\begin{equation}\label{eq:COR}
 \pi^{A_*}_{*,*}(\ul{\tmf} \otimes M) \cong \pi^{A(2)_*}_{*,*}(M)
 \end{equation}
 for any $M \in \mc{D}_{A_*}$. 
Therefore the $E_1$-term of the algebraic $\tmf$-resolution takes the form
$$ \E{ass}{}_2^{*,*}(\tmf \wedge \br{\tmf}^{\wedge n}) \cong \pi_{*,*}^{A(2)_*} (\br{\ul{\tmf}}^{\otimes n}). $$

There is a decomposition \cite{BHHM}
\begin{equation}\label{eq:splittingtmf^n}
 \br{\ul{\tmf}}^{\otimes n} \simeq \bigoplus_{i_1, \ldots, i_n > 0} \Sigma^{8(i_1 + \cdots + i_n)} \bou_{i_1} \otimes \cdots \otimes \bou_{i_n}
\end{equation} 
in $\mc{D}_{A(2)_*}$, where $\bou_i$ denotes the homology of the $i$th bo-Brown-Gitler spectrum
(see Section~\ref{sec:BrownGitler}).

For an object $M \in \mc{D}_{A(2)_*}$, the localization $v_2^{-1}M$ denotes the localization of $M$ with respect to the element
$$ v_2^{8} \in \pi^{A(2)_*}_{48,8}(\FF_2), $$ 
so we have
$$ v_2^{-1}\E{ass}{}_2^{*,*}(\tmf \wedge \br{\tmf}^{\wedge n}) \cong \pi_{*,*}^{A(2)_*} (v_2^{-1}\br{\ul{\tmf}}^{\otimes n}). $$
We will prove
\begin{thm}[see Corollary \ref{cor:v2bo2j} and (\ref{eq:v2boj2})]\label{thm:bo2j}
There are equivalences in $\mc{D}_{A(2)_*}$
\begin{align*}
	v_2^{-1} \bou_{2j} & \simeq \Sigma^{8j} v_2^{-1} \bou_j \oplus \Sigma^{8j+8,1} v_2^{-1} \bou_{j-1}, \\
	v_2^{-1} \bou_{2j+1} & \simeq v_2^{-1} \Sigma^{8j}\bou_j \otimes \bou_1.
\end{align*}
\end{thm}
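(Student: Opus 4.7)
I would prove the two equivalences by constructing natural $A(2)_*$-comodule maps realizing the claimed decompositions, and then checking that they become equivalences in $\mc{D}_{A(2)_*}$ after inverting $v_2$.

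The basic structural input is the well-known identification
\[ H_*(\bo) = \FF_2[\xib_1^4, \xib_2^2, \xib_3, \xib_4, \ldots], \]
which is a weight-graded polynomial algebra whose weight-$j$ summand is $\bou_j$. Two natural $A(2)_*$-comodule maps respect the weight grading: the multiplication on $H_*(\bo)$ gives comodule maps $\bou_i \otimes \bou_j \to \bou_{i+j}$, and the Frobenius $x \mapsto x^2$ induces a comodule map $\phi \colon \bou_j \to \bou_{2j}$.

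For the odd case, the natural candidate for the splitting is the composite
\[ \Sigma^{8j}\bou_j \otimes \bou_1 \xrightarrow{\phi \otimes \mathrm{id}} \Sigma^{8j}\bou_{2j} \otimes \bou_1 \xrightarrow{\text{mult}} \bou_{2j+1}, \]
with the $\Sigma^{8j}$ dictated by the suspension conventions in~\eqref{eq:splittingtmf^n}. To show this is a $v_2^{-1}\pi^{A(2)_*}_{*,*}$-isomorphism, I would induct on $j$ and reduce to a rank check on $v_2^{-1}$-Margolis homology with respect to the Milnor primitive $Q_2$, which detects $v_2$-periodic information over $A(2)_*$.

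For the even case, no direct multiplicative identification is available. Instead, I would construct a natural short exact sequence of $A(2)_*$-comodules
\[ 0 \longrightarrow \Sigma^{8j+8,1}\bou_{j-1} \longrightarrow \bou_{2j} \longrightarrow \Sigma^{8j}\bou_j \longrightarrow 0, \]
where the quotient arises from (an appropriate section of) the Frobenius and the sub is identified with a $\xib_1^4$-multiple of a weight-$(j-1)$ piece. The cohomological shift $\Sigma^{0,1}$ on the sub reflects that the extension is classified by a nontrivial element of Adams filtration $1$. Then I would split the sequence after inverting $v_2$ by showing its classifying class in $\Ext^{1,*}_{A(2)_*}(\Sigma^{8j}\bou_j, \Sigma^{8j+8,1}\bou_{j-1})$ is $v_2$-torsion, which I would do by induction on $j$ starting from low-weight base cases.

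The main obstacle will be the even case. Two things have to work together: first, extracting the correct natural short exact sequence with exactly the stated internal and cohomological shifts from the $A(2)_*$-comodule structure on $\bou_j$ (not just its underlying $\FF_2$-vector space); second, verifying that the resulting $\Ext^1$-class is annihilated by a power of $v_2$. The inductive structure — the sub at weight $2j$ is built from $\bou_{j-1}$ — makes this tractable, but propagating a handful of base-case Ext computations to all $j$ and carefully tracking the filtration shift is where the bulk of the technical work will lie.
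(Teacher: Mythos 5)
Your proposal goes wrong at the structural level in the even case, and hand-waves past what is actually the hardest step in the paper.

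\textbf{The structural input.} There is no short exact sequence of $A(2)_*$-comodules $0 \to \Sigma^{8j+8,1}\bou_{j-1} \to \bou_{2j} \to \Sigma^{8j}\bou_j \to 0$, and not merely because a cohomological shift $\Sigma^{0,1}$ cannot appear in a literal SES of comodules. The actual input, taken from \cite[Sec.~7]{BHHM}, is a \emph{four-term} exact sequence
\[ 0\to \Sigma^{8j} \ul{\bo}_j \to \ul{\bo}_{2j}\to A(2)\mmod A(1)_* \otimes \ul{\tmf}_{j-1}  \to \Sigma^{8j+9} \ul{\bo}_{j-1} \to 0, \]
in which $\Sigma^{8j}\bou_j$ is the \emph{sub}comodule (the opposite of your ansatz), and $\bou_{j-1}$ appears two steps away, separated by the nonzero comodule $A(2)\mmod A(1)_*\otimes\tmfu_{j-1}$. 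It is only because this middle term is $v_2$-acyclic that one gets a two-step cofiber sequence $\Sigma^{8j}v_2^{-1}\bou_j \to v_2^{-1}\bou_{2j} \to \Sigma^{8j+8,1}v_2^{-1}\bou_{j-1}$ in $\mc{D}_{A(2)_*}$; the $\Sigma^{0,1}$ and the shift from $9$ to $8$ are precisely the record of the extra step. Your proposed identification of the sub as ``a $\xib_1^4$-multiple of a weight-$(j{-}1)$ piece'' also fails a degree count: $\xib_1^4$ shifts both internal degree and weight by $4$, which cannot produce a shift of internal degree $8j+8$ with weight shift $4j+4$.

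\textbf{The vanishing of the attaching map.} Even granting the correct cofiber sequence, the statement that the classifying class $\partial_j$ is $v_2$-torsion is exactly the theorem, and ``induction on $j$ from low-weight base cases'' contains no mechanism to make the induction close up. The paper's proof (Theorem~\ref{thm:partialj}) needs, in addition to the inductive splitting of $v_2^{-1}D\bou_{j-1}\otimes\bou_j$ into $v_2^{-1}\bou_1$, $v_2^{-1}\bp{2}$, and $\T$ summands, a comparison with the $g$-local attaching map $\partial_j'$: both localize to the same class in $v_2^{-1}g^{-1}$, and the groups $\piA_{7,s}(g^{-1}\bou_1)$ and $\piA_{7,s}(v_2^{-1}\bou_1)$ (and the analogous degree-$15$ groups for $\bp{2}$) inject into the double localization but are supported in \emph{disjoint ranges of filtration} $s$, which forces both to vanish. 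This is a statement about filtrations, not about Margolis homology; no localized Margolis-homology rank count sees it. Without some substitute for the $g$-local comparison, your plan does not resolve the extension.

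\textbf{The odd case.} Here you overcomplicate things. The equivalence $v_2^{-1}\bou_{2j+1} \simeq \Sigma^{8j}v_2^{-1}\bou_j\otimes\bou_1$ is immediate from the \emph{short} exact sequence $0 \to \Sigma^{8j}\bou_j\otimes\bou_1 \to \bou_{2j+1} \to A(2)\mmod A(1)_*\otimes\tmfu_{j-1} \to 0$ of comodules, whose cokernel is $v_2$-acyclic; no Margolis-homology rank check is needed. Also note that the Frobenius $x\mapsto x^2$ is not an $A_*$- (or $A(2)_*$-) comodule map in the naive sense, since the coaction $\psi(\zeta_n)=\sum\zeta_{n-i}^{2^i}\otimes\zeta_i$ does not intertwine with squaring on one side only; the map you describe would need to be replaced by the explicit comodule inclusion from \cite{BHHM}.
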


The splittings of (\ref{eq:splittingtmf^n}) and Theorem~\ref{thm:bo2j} inductively imply that in $\mc{D}_{A(2)_*}$ the objects $v_2^{-1}\br{\ul{\tmf}}^{\otimes n}$ split as a wedge of bigraded suspensions of $v_2^{-1}\bou_1^{\otimes k}$.  We are left with identifying these explicitly.

To this end we will introduce an object
$$ \ul{\TMF_0(3)} \in \mc{D}_{A(2)_*} $$
which serves as an algebraic version of the $\tmf$-module $\TMF_0(3)$ (the theory of topological modular forms associated to the congruence subgroup $\Gamma_0(3) < SL_2(\ZZ)$), and prove

\begin{thm}[Proposition \ref{prop:split1} and \ref{prop:split2}]\label{thm:mainsplittings} 
	There are splittings in $\mc{D}_{A(2)_*}$
	\begin{align*}
		v_2^{-1} \bou_1^{\otimes 3} & \simeq 2\Sigma^{16,1} v_2^{-1}\bou_1 \oplus \Sigma^{24,2}\ul{\TMF_0(3)}, \\
		\ul{\TMF_0(3)} \otimes \bou_1 & \simeq \Sigma^{24,3} \ul{\TMF_0(3)} \oplus \Sigma^{40,6} \ul{\TMF_0(3)}.
	\end{align*}
\end{thm}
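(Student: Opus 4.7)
Since $\mc{D}_{A(2)_*}$ is a stable homotopy category with weak equivalences detected by $\pi^{A(2)_*}_{*,*}$, it suffices in each case to exhibit maps realizing the summands on the right and verify that they collectively induce an isomorphism on these Ext groups.

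For the first splitting, the starting point is the decomposition (\ref{eq:splittingtmf^n}) with $n=3$, which realizes $\Sigma^{24}\bou_1^{\otimes 3}$ as a summand of $\br{\ul{\tmf}}^{\otimes 3}$. The summand $\Sigma^{24,2}\ul{\TMF_0(3)}$ should correspond to a ``diagonal'' copy inside $v_2^{-1}\br{\ul{\tmf}}^{\otimes 3}$ reflecting Mathew's topological identification of $\tmf$-cooperations in terms of level structures, while the two $\Sigma^{16,1}v_2^{-1}\bou_1$ summands should come from the ``edge'' pairings of the three tensor factors, matching the summands produced by Theorem~\ref{thm:bo2j} applied to $v_2^{-1}\bou_2$ tensored with $\bou_1$. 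Once the three maps are assembled into a candidate equivalence, I would verify the equivalence by matching Poincar\'e series of $\pi^{A(2)_*}_{*,*}$ on both sides, using the Davis--Mahowald description of $\pi^{A(2)_*}_{*,*}(v_2^{-1}\bou_1)$ together with the computation of $\pi^{A(2)_*}_{*,*}(\ul{\TMF_0(3)})$ coming from the defining description of $\ul{\TMF_0(3)}$ as an $A(2)_*$-comodule.

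For the second splitting, my plan is to bootstrap from the first. Tensoring the first splitting with $\bou_1$ gives a decomposition of $v_2^{-1}\bou_1^{\otimes 4}$ in terms of $v_2^{-1}(\bou_1\otimes\bou_1)$ and $\ul{\TMF_0(3)}\otimes\bou_1$; on the other hand, using Theorem~\ref{thm:bo2j} (in particular $v_2^{-1}\bou_3 \simeq \Sigma^{8}v_2^{-1}\bou_1\otimes\bou_1$, combined with the splitting of $v_2^{-1}\bou_4$) one can compute $v_2^{-1}\bou_1^{\otimes 4}$ independently by pairing the four factors differently and iterating (\ref{eq:splittingtmf^n}). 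Equating the two decompositions and cancelling the common $v_2^{-1}\bou_1$ summands isolates the desired splitting of $\ul{\TMF_0(3)}\otimes\bou_1$ into two shifted copies of $\ul{\TMF_0(3)}$. Alternatively, one may realize the two summands on the right directly from the module structure of $\ul{\TMF_0(3)}$ over $\ul{\tmf}$ (using a natural map $\bou_1 \to \br{\ul{\tmf}}$ to land in $\ul{\TMF_0(3)}$-modules) and then confirm the equivalence via the same Poincar\'e series comparison.

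The main obstacle is the computation of $\pi^{A(2)_*}_{*,*}(\ul{\TMF_0(3)})$ and of the Ext groups of $\ul{\TMF_0(3)}\otimes\bou_1$: one needs $\ul{\TMF_0(3)}$ in a form explicit enough to write down the maps above, and the $A(2)_*$-comodule extension data to be tractable. After $v_2$-inversion everything becomes periodic over a small Laurent polynomial ring, so matching Poincar\'e series is in principle a finite check, but carrying it out cleanly requires careful bookkeeping of the bigraded suspensions appearing in the putative splittings, and in particular ensuring that the ``extra'' $v_2^{-1}\bou_1$ pieces cancel against each other in exactly the way predicted by the two identifications of $v_2^{-1}\bou_1^{\otimes 4}$.
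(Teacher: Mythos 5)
Your proposal has two concrete problems, one of logic and one of substance.

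First, the logical dependency is reversed.  You invoke Theorem~\ref{thm:bo2j} to produce and match the summands in both splittings, but in the paper Theorem~\ref{thm:bo2j} (i.e.\ Corollary~\ref{cor:v2bo2j}) is a \emph{consequence} of Theorem~\ref{thm:mainsplittings}: the decompositions of $v_2^{-1}\bp{k}$ from Propositions~\ref{prop:split1} and \ref{prop:split2} feed the generating-function machinery of Section~\ref{sec:genfuncs}, which in turn feeds the proof that $\partial_j=0$ in Section~\ref{sec:partialj}, which is what yields Theorem~\ref{thm:bo2j}.  Using $v_2^{-1}\bou_2$ and its splitting, or the splitting of $v_2^{-1}\bou_4$, to establish Theorem~\ref{thm:mainsplittings} would be circular.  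The paper's proof of the first splitting is purely a statement about $\bp{3}$ and uses none of the $\bou_j$ technology.

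Second, and more fundamentally, the phrase ``a diagonal copy inside $v_2^{-1}\br{\ul\tmf}^{\otimes 3}$'' is not a construction, and your verification scheme of ``matching Poincar\'e series'' cannot close the gap.  The actual mechanism in the paper is the idempotent decomposition coming from the $C_3$-permutation action on $\bou_1^{\otimes 3}$ (equivalently the $\Sigma_3$-idempotents of Remark~\ref{idempotent}).  Since we are over $\FF_2$ and $|C_3|$ is invertible, one gets an honest direct-sum decomposition $\bp{3}\simeq (\bp{3})^{hC_3}\oplus B$, with $\pi^{A(2)_*}_{*,*}((\bp{3})^{hC_3})=\pi^{A(2)_*}_{*,*}(\bp{3})^{C_3}$.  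The fixed-point summand is then identified with $\Sigma^{24,2}\T$ by the Recognition Principle (Theorem~\ref{thm:recprinc}), which requires matching not just bigraded dimensions but the $v_0$, $h_1$, $h_2$, $v_1^4$, $v_0v_2^2$, $v_2^8$, $v_2^4h_1$, $g$ module actions; the remaining summand $B$ is hit by the explicit map $x_{16}\otimes 1\oplus 1\otimes x_{16}$ from $2\Sigma^{16,1}v_2^{-1}\bou_1$.  Poincar\'e-series agreement alone does not give that a candidate map is a $\pi^{A(2)_*}_{*,*}$-isomorphism; you would still have to argue injectivity or surjectivity, and the module-structure check in the Recognition Principle is precisely what does this work.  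Your instinct for the second splitting (use the $\T$-module structure of $\T\otimes\bou_1$ and extend generators) is on the right track and essentially matches the paper, but again it must be paired with a module-structure comparison on $\piA_{*,*}(\bp{4})$ rather than a raw count, and ``cancelling common summands'' of $v_2^{-1}\bp{4}$ computed two ways is not a step you can take without a Krull--Schmidt-type uniqueness argument, which you have not supplied.
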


The splittings of Theorem~\ref{thm:mainsplittings} imply that the objects $v_2^{-1} \bou_1^{\otimes k}$ split in $\mc{D}_{A(2)_*}$ as a direct sum of bigraded suspensions of copies of $v_2^{-1}\FF_2$, $v_2^{-1}\bou_1$, $v_2^{-1}\bou_1^{\otimes 2}$, and $\ul{\TMF_0(3)}$.

Putting this all together, we have the following theorem (see Corollary~\ref{cor:main} for a more precise formulation).

\begin{thm*}
There is a splitting of 
$$ v_2^{-1} \br{\ul{\tmf}}^{\otimes n} \in \mc{D}_{A(2)_*}$$
into a well-described sum of various bigraded suspensions of 
\begin{itemize}
\item $v_2^{-1}\FF_2$,
\item $v_2^{-1}\bou_1$, 
\item $v_2^{-1}\bou_1^{\otimes 2}$,
\item $\ul{\TMF_0(3)}$.
\end{itemize}
\end{thm*}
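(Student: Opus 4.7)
The plan is to chain the splitting (\ref{eq:splittingtmf^n}) together with Theorems~\ref{thm:bo2j} and~\ref{thm:mainsplittings} via two nested inductions. Because $v_2^{-1}(-)$ is a smashing localization on $\mc{D}_{A(2)_*}$ that commutes with direct sums and bigraded suspensions, applying it to (\ref{eq:splittingtmf^n}) yields
$$v_2^{-1}\br{\ul{\tmf}}^{\otimes n} \simeq \bigoplus_{i_1, \ldots, i_n > 0} \Sigma^{8(i_1 + \cdots + i_n)} v_2^{-1}\bigl(\bou_{i_1} \otimes \cdots \otimes \bou_{i_n}\bigr),$$
and it therefore suffices to split each summand on the right.

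The first reduction is to show that every $v_2^{-1}\bou_j$ (for $j \geq 0$) is equivalent to a direct sum of bigraded suspensions of objects $v_2^{-1}\bou_1^{\otimes k}$ for various $k \geq 0$, where we set $\bou_1^{\otimes 0} := \FF_2$. This follows by strong induction on $j$ from Theorem~\ref{thm:bo2j}: the recursion expresses $v_2^{-1}\bou_{2j}$ and $v_2^{-1}\bou_{2j+1}$ in terms of $v_2^{-1}\bou_j$ and $v_2^{-1}\bou_{j-1}$ (with an extra $\bou_1$ tensor factor in the odd case), and the base cases $v_2^{-1}\bou_0 = v_2^{-1}\FF_2$ and $v_2^{-1}\bou_1$ are already of the desired form. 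Since the $v_2$-local objects form a tensor ideal and tensor distributes over direct sums, we may apply this reduction to each factor $\bou_{i_\ell}$ of the summand independently. The resulting tensor products $v_2^{-1}\bou_1^{\otimes k_1} \otimes \cdots \otimes \bou_1^{\otimes k_n}$ collapse to $v_2^{-1}\bou_1^{\otimes m}$ with $m = k_1 + \cdots + k_n$, so the problem reduces to splitting $v_2^{-1}\bou_1^{\otimes m}$ for each $m \geq 0$.

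The second reduction uses Theorem~\ref{thm:mainsplittings}. For $m \in \{0, 1, 2\}$, the object $v_2^{-1}\bou_1^{\otimes m}$ is already among the four building blocks, so we induct on $m \geq 3$. Rewriting $v_2^{-1}\bou_1^{\otimes m} = v_2^{-1}\bou_1^{\otimes 3} \otimes \bou_1^{\otimes (m-3)}$ and applying the first splitting of Theorem~\ref{thm:mainsplittings} yields
$$v_2^{-1}\bou_1^{\otimes m} \simeq 2\Sigma^{16,1} v_2^{-1}\bou_1^{\otimes (m-2)} \oplus \Sigma^{24,2} \bigl(\ul{\TMF_0(3)} \otimes \bou_1^{\otimes (m-3)}\bigr).$$
The first summand splits by the inductive hypothesis, and the second splits into $2^{m-3}$ bigraded suspensions of $\ul{\TMF_0(3)}$ by iterated application of the second splitting of Theorem~\ref{thm:mainsplittings}.

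The main obstacle is not the existence of the splitting but its explicit form: assembling the two nested recursions into a closed combinatorial description that accurately tracks the multiplicities and the two-parameter bigraded shifts of each of the four building blocks. This bookkeeping is the content of the more precise formulation given in Corollary~\ref{cor:main}.
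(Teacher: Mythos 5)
Your proposal is correct and follows essentially the same two-step reduction the paper uses: first apply (\ref{eq:splittingtmf^n}) and Theorem~\ref{thm:bo2j} (by strong induction on $j$, with the even/odd dichotomy) to reduce every summand to bigraded suspensions of $v_2^{-1}\bou_1^{\otimes m}$, and then apply the two splittings of Theorem~\ref{thm:mainsplittings} iteratively to decompose $v_2^{-1}\bou_1^{\otimes m}$ into the four stated building blocks. You also correctly observe that the nontrivial remaining content is the explicit bookkeeping of multiplicities and bidegree shifts, which the paper encodes via the generating functions of Section~\ref{sec:genfuncs} and records in Corollary~\ref{cor:main}.
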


The most subtle step to all of this is the first equivalence of Theorem~\ref{thm:bo2j}.  Indeed an explicit exact sequence (see  (\ref{eq:boSES1}) of \cite{BHHM}) implies that $v_2^{-1}\bou_{2j}$ is built from $v_2^{-1}\Sigma^{8j}\bou_j$ and $v_2^{-1}\Sigma^{8j+8,1}\bou_{j-1}$ in $\mc{D}_{A(2)_*}$.  The hard part is showing that the attaching map between these two components is trivial.  This is accomplished by showing that if this attaching map is non-trivial, then it is non-trivial after $g$-localization
where $g$ is the generator of $\piA_{20,4}(\FF_2)$.  We then prove the $g$-local attaching map is trivial (see Corollary~\ref{cor:glocal} and Theorem~\ref{thm:glocal}), strengthening the results of \cite{BBT}.

\begin{thm*}
There is a splitting of 
$$ g^{-1} \br{\ul{\tmf}}^{\otimes n} \in \mc{D}_{A(2)_*}$$
into a well-described sum of various bigraded suspensions of 
\begin{itemize}
\item $g^{-1}\FF_2$,
\item $g^{-1}\bou_1$, 
\item $g^{-1}\bou_1^{\otimes 2}$.
\end{itemize}
\end{thm*}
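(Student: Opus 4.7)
The plan is to run the same argument as the $v_2$-local main theorem, exploiting the fact that the $\ul{\TMF_0(3)}$ summand appearing there should be $g$-acyclic. Starting from (\ref{eq:splittingtmf^n}),
$$ g^{-1}\br{\ul{\tmf}}^{\otimes n} \simeq \bigoplus_{i_1,\ldots,i_n > 0} \Sigma^{8(i_1+\cdots+i_n)}\, g^{-1}\bigl(\bou_{i_1}\otimes\cdots\otimes\bou_{i_n}\bigr), $$
so it suffices to decompose each $g^{-1}(\bou_{i_1}\otimes\cdots\otimes\bou_{i_n})$ as a direct sum of bigraded suspensions of $g^{-1}\FF_2$, $g^{-1}\bou_1$, and $g^{-1}\bou_1^{\otimes 2}$. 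By the $g$-local analogue of Theorem~\ref{thm:bo2j} --- which, as the introduction notes, is actually the key input to the $v_2$-local version and is already available via Corollary~\ref{cor:glocal} and Theorem~\ref{thm:glocal} --- we have
\begin{align*}
g^{-1}\bou_{2j} &\simeq \Sigma^{8j}g^{-1}\bou_j \oplus \Sigma^{8j+8,1}g^{-1}\bou_{j-1},\\
g^{-1}\bou_{2j+1} &\simeq g^{-1}\Sigma^{8j}\bou_j \otimes \bou_1.
\end{align*}
Iterating (with base cases $\bou_0 = \FF_2$ and $\bou_1$) reduces every $g^{-1}\bou_i$ to a direct sum of bigraded suspensions of $g^{-1}\bou_1^{\otimes k}$, and distributing tensor factors across the $n$ outer slots then reduces the whole problem to decomposing $g^{-1}\bou_1^{\otimes K}$ for $K \geq 0$.

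For $K \leq 2$ there is nothing to do. For $K \geq 3$, the central step is the claim that $g^{-1}\ul{\TMF_0(3)} \simeq 0$ in $\mc{D}_{A(2)_*}$. Granting this, the splitting of Proposition~\ref{prop:split1} $g$-localizes to
$$ g^{-1}\bou_1^{\otimes 3} \simeq 2\,\Sigma^{16,1}g^{-1}\bou_1, $$
and a straightforward induction $K \mapsto K+2$ shows that every $g^{-1}\bou_1^{\otimes K}$ ($K \geq 3$) is a direct sum of bigraded suspensions of $g^{-1}\bou_1$ or $g^{-1}\bou_1^{\otimes 2}$, depending on the parity of $K$. Reassembling the pieces yields the desired splitting, with an explicit partition-type enumeration of the summands parallel to the $v_2$-local counting.

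The main obstacle is the claim $g^{-1}\ul{\TMF_0(3)} \simeq 0$. My approach would be to compute $\pi^{A(2)_*}_{*,*}(\ul{\TMF_0(3)})$ directly from the algebraic model constructed in the body of the paper and verify that the action of $g$ is locally nilpotent. Heuristically this should reflect the topological fact that $g$ is killed by the canonical map $\tmf \to \TMF_0(3)$: since $\TMF_0(3)$ has height-one-like homotopy and does not support the $g$-periodic family, the algebraic analogue should inherit $g$-nilpotence. If a direct calculation of the full homotopy of $\ul{\TMF_0(3)}$ is too delicate, an alternative is to exhibit a Massey product expressing a high power of $g$ as a boundary in a change-of-rings spectral sequence for $\ul{\TMF_0(3)}$, thereby forcing $g$-acyclicity.
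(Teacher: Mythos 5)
Your reduction to Corollary~\ref{cor:glocal} and an inductive treatment of $g^{-1}\bou_1^{\otimes K}$ is the right high-level structure, and $g^{-1}\T \simeq 0$ is indeed true and used in the paper (it is invoked in the proof of Theorem~\ref{thm:partialj}). However, there is a genuine gap at the step where you ``$g$-localize'' Proposition~\ref{prop:split1}. That proposition is a statement about $v_2^{-1}\bou_1^{\otimes 3}$, so applying $g^{-1}(-)$ to it yields
$$ g^{-1}v_2^{-1}\bou_1^{\otimes 3} \;\simeq\; 2\,\Sigma^{16,1}g^{-1}v_2^{-1}\bou_1 \;\oplus\; \Sigma^{24,2}\,g^{-1}\T \;\simeq\; 2\,\Sigma^{16,1}g^{-1}v_2^{-1}\bou_1,$$
which is a statement about the double localization $g^{-1}v_2^{-1}$, not about $g^{-1}$. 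These are not interchangeable: $v_2^8$ is \emph{not} already inverted in $\piA_{*,*}(g^{-1}\FF_2) = \FF_2[h_{2,1}^{\pm}, v_1, v_2^8]$, so $g^{-1}M \not\simeq g^{-1}v_2^{-1}M$ in general. Thus you have not established the claim $g^{-1}\bou_1^{\otimes 3} \simeq 2\Sigma^{16,1}g^{-1}\bou_1$ that your induction rests on. The paper does not take this route: it proves the $g$-local splitting of $\bou_1^{\otimes 3}$ directly (Proposition~\ref{prop:split3}), by rerunning the idempotent/group-action argument from Proposition~\ref{prop:split1} but observing that $g^{-1}(\bou_1^{\otimes 3})^{hC_3} \simeq 0$ already at the $g$-local level, because $g^{-1}\piA_{*,*}(\bou_1^{\otimes 3})^{C_3} = 0$ by inspection. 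This is cleaner than your route since it never passes through $\T$ (which is by construction a $v_2$-local object) and hence never needs the claim $g^{-1}\T \simeq 0$. The paper then organizes the induction via the polynomial ring $R' = \ZZ[s^{\pm},t^{\pm},x]/(x^3 = 2t^2sx)$ and the generating polynomials $f_j'$, and the final explicit form is Theorem~\ref{thm:glocal}. To repair your argument, replace the $g$-localization of Proposition~\ref{prop:split1} with a direct proof of Proposition~\ref{prop:split3} (or, alternatively, argue that $v_2^8$ acts injectively on $g^{-1}\piA_{*,*}(\bou_1^{\otimes 3})$ and $g^{-1}\piA_{*,*}(\bou_1)$ so that the $g^{-1}v_2^{-1}$ statement descends, but that requires its own verification and is not more economical).
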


The $v_2$-local results of this paper may be applied to understand the $\TMF$-resolution, where
$$ \TMF = \tmf[\Delta^{-1}]. $$
Namely, there are localized ASS's
$$ \pi^{A(2)_*}_{*,*}(v_2^{-1}\ul{\br{\tmf}}^{\otimes s} \otimes \ul{X})
\Rightarrow \pi_* (\TMF \wedge \br{\TMF}^{\wedge s} \wedge X)^{\wedge}_2. $$ 

Our $v_2$-local results also may be used to understand the $v_2$-localized algebraic tmf resolution
$$ v_2^{-1}\piA_{*,*}(\br{\tmfu}^{\otimes n} \otimes M) \Rightarrow v_2^{-1}\pi^{A_*}_{*,*}(M). $$
Here, the $v_2$-localized Ext groups $v_2^{-1}\pi^{A_*}_{*,*}$ are as defined in \cite{MahowaldShick}.

The $g$-local results of this paper may be applied to understand $g$-local Ext over the Steenrod algebra, using the $g$-local algebraic $\tmf$-resolution
$$ \piA_{*,*}(g^{-1} \br{\tmfu}^{\otimes n} \otimes M) \Rightarrow g^{-1}\pi^{A_*}_{*,*}(M). $$

\subsection*{Organization of the paper}

In Section~\ref{sec:BrownGitler} we reduce the study of $\tmfu$ to the bo-Brown-Gitler comodules $\bou_j$.  We review exact sequences which relate these comodules to $\bp{k}$.  Upon $v_2$-localization, we show that these exact sequences give complete decompositions of $v_2^{-1}\bo_j$ in terms of bigraded suspensions of $v_2^{-1}\bp{k}$ for various $k$, \emph{provided certain obstructions $\partial_{j'}$ vanish for $j' \le j/2$.}

In Section~\ref{sec:bo1^k} we review the structure of $\piA_{*,*}(\bp{k})$ for $0 \le k \le 4$.  These will form the computational input for the rest of the paper.

In Section~\ref{sec:TMF3} we construct $\T \in \mc{D}_{A(2)_*}$, our algebraic analog of $\TMF_0(3)$, and establish some basic properties.

In Section~\ref{sec:splitting} we prove a few key splitting theorems that inductively give complete decompositions of $\bp{k} \in \mc{D}_{A(2)_*}$ into indecomposable summands.  Provided the obstructions $\partial_{j'}$ vanish, we therefore get complete decompositions of $v_2^{-1}\bou_j$.

In Section~\ref{sec:genfuncs} we define certain generating functions which conveniently allow for algebraic computation of the putative decompositions of $v_2^{-1}\bou_j$.

In Section~\ref{sec:glocal} we explain the analogs of the $v_2$-local decompositions of $\bou_j$ and $\bp{k}$ in the $g$-local category.  The decompositions of $g^{-1}\bou_j$ depend on the vanishing of certain obstructions $\partial'_{j}$.

In Section~\ref{sec:partialj} we prove our main result: the obstructions $\partial_j$ and $\partial'_j$ vanish for all $j$.  This results in a complete decomposition of $v_2^{-1}\br{\tmfu}^{\otimes n}$ and $g^{-1}\br{\tmfu}^{\otimes n}$.

In Section~\ref{sec:BBT}, we relate our $g$-local results to the computations of Bhattacharya, Bobkova, and Thomas \cite{BBT}, providing a strengthening of their results.

Appendix~\ref{apx:charts} contains charts of $\pi^{A(2)_*}_{*,*}\bou_1^{\otimes k}$ for $0 \le k \le 4$ and $\pi^{A(2)_*}_{*,*}(\T)$.  These are referred to throughout the paper.

\bl{In Appendix~\ref{appendix}, we discuss a stable splitting of $\bo_1^{\s 3}$ and its relationship with Theorem~\ref{thm:mainsplittings}. 
}

\subsection*{Acknowledgments}

The authors are grateful for the comments and corrections of two referees.
The results of this paper were made possible with the assistance of the computational Ext software of R.~Bruner and A.~Perry, and the computer algebra systems Fermat and Sage.  The first author was supported by NSF grants
DMS-1547292 and DMS-2005476.

\section{$\bo$-Brown-Gitler comodules}\label{sec:BrownGitler}

In this section we reduce the analysis of $v_2^{-1}\br{\ul{\tmf}}^{\otimes n}$ to the analysis of $v_2$-local $\bo$-Brown-Gitler comodules. 
These are $A_*$-comodules which are the homology of the bo-Brown-Gitler spectra constructed by \cite{GoerssJonesMahowald}.
Mahowald used integral Brown-Gitler spectra to analyze the bo resolution \cite{Mahowald}.  The bo-Brown-Gitler comodules play a similar role in the algebraic tmf resolution \cite{BHHM}, \cite{MahowaldRezk}, \cite{DavisMahowald},  \cite{BOSS}, \cite{BHHM2}, \cite{tmfhi}.  

Endow the mod $2$ homology of $\bo$ 
$$ \bou \cong A\mmod A(1)_* = \FF_2[\zeta_1^4, \zeta_2^2, \zeta_3, \ldots] $$
(where $\zeta_i$ denotes the conjugate of $\xi_i \in A_*$) with a multiplicative grading by declaring the \emph{weight} of $\zeta_i$ to be 
\begin{equation}\label{eq:weight}
wt(\zeta_i) = 2^{i-1}. 
\end{equation}
The $i$th \emph{$\bo$-Brown-Gitler} comodule is the subcomodule
$$ \bou_i  \subset A \mmod A(1)_* $$
spanned by monomials of weight less than or equal to $4i$.

For an object $M \in \mc{D}_{A(2)_*}$, let 
$$ DM = \Hom_{\FF_2}(M,\FF_2) $$ 
be its $\FF_2$-linear dual.  We record the following useful result.

\begin{prop}\label{prop:Dbo1}
In $\mc{D}_{A(2)_*}$, there is an equivalence 
$$ v_2^{-1} D\bou_1 \simeq \Sigma^{-16,-1}v_2^{-1}\bou_1. $$
\end{prop}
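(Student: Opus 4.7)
The plan is to realize the claimed equivalence via an explicit map in $\mc{D}_{A(2)_*}$ and then verify it becomes an isomorphism on $v_2$-local homotopy groups. The key observation motivating the shift is that $\Sigma^{-16,-1}$ has internal degree $-17$, which matches the top internal degree of $A(2) \mmod A(1)_* \supset \bou_1$: its top class $\zeta_1^4 \zeta_2^2 \zeta_3$ sits in internal degree $17$. This signals a Poincar\'e-type duality for $\bou_1$ that is activated only after $v_2$-inversion.

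First, we record the explicit structure: $\bou_1$ is the $4$-dimensional $A(2)_*$-subcomodule of $A \mmod A(1)_*$ with $\FF_2$-basis $\{1, \zeta_1^4, \zeta_2^2, \zeta_3\}$ in internal degrees $0, 4, 6, 7$ (all in cohomological degree $0$), with coaction inherited from $A_*$. Dually, $D\bou_1$ has the corresponding dual basis in internal degrees $0, -4, -6, -7$. No pure bigrading suspension of $\bou_1$ can match $D\bou_1$ on the nose, confirming that the equivalence is a purely $v_2$-local phenomenon mediated by $v_2 \in \piA_{6,1}(\FF_2)$.

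Next, we construct a candidate map $\phi\colon \bou_1 \to \Sigma^{16,1} D\bou_1$ in $\mc{D}_{A(2)_*}$. By adjunction, such a map corresponds to a class in
$$\piA_{16,1}(D\bou_1 \otimes D\bou_1) \cong \Ext^{1,17}_{A(2)_*}(\FF_2, D\bou_1 \otimes D\bou_1) \cong \Ext^{1,17}_{A(2)}(\bou_1 \otimes \bou_1, \FF_2).$$
I would identify an explicit nonzero $v_2$-periodic class here by direct inspection of the low-filtration Ext computations from \cite{BOSS}. Conceptually, this class arises as one unit of $v_2$ applied to the ``Poincar\'e pairing'' $\bou_1 \otimes \bou_1 \to \Sigma^{10,0}\FF_2$ dual to the element $\zeta_1^4 \zeta_2^2$ (of internal degree $10$) needed to complete $\bou_1 \cdot \bou_1$ to the top of $A(2) \mmod A(1)_*$; the filtration shift $s=1$ records precisely this $v_2$-multiplication.

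Finally, to verify that $v_2^{-1}\phi$ is an equivalence, we compare $v_2^{-1}\piA_{*,*}(\bou_1)$ (known from \cite{BHHM}) with $v_2^{-1}\piA_{*,*}(D\bou_1)$ (computed dually by the same Davis--Mahowald spectral sequence techniques applied to $D\bou_1$) and check that $\phi$ realizes the bigrading shift $(-16,-1)$ between them on generators. The main obstacle is pinpointing the correct class $\phi$ and verifying its non-nilpotency under $v_2$-multiplication; once this is accomplished, the matching of homotopy generators is routine bookkeeping over the known Ext computations.
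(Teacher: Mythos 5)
Your opening observation --- that the shift $-17$ matches the top internal degree $17$ of $A(2)\mmod A(1)_*$ --- is exactly the right clue, but you stop one step short of the punchline. The quotient $(A(2)\mmod A(1)_*)/\bou_1$ is spanned by $\{\zeta_1^4\zeta_2^2,\zeta_1^4\zeta_3,\zeta_2^2\zeta_3,\zeta_1^4\zeta_2^2\zeta_3\}$ in degrees $10,11,13,17$, and as an $A(2)_*$-comodule this is exactly $\Sigma^{17}D\bou_1$. So there is a short exact sequence
$$0 \to \bou_1 \to A(2)\mmod A(1)_* \to \Sigma^{17}D\bou_1 \to 0,$$
and since $v_2^{-1}A(2)\mmod A(1)_* \simeq 0$ (a fact already used throughout the paper), the resulting triangle gives $v_2^{-1}\Sigma^{17}D\bou_1 \simeq \Sigma^{1,-1}v_2^{-1}\bou_1$, i.e.\ $v_2^{-1}D\bou_1 \simeq \Sigma^{-16,-1}v_2^{-1}\bou_1$. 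This is the paper's proof; no map has to be built by hand and no $\Ext$ computation is needed.

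Your alternative plan of constructing $\phi$ and verifying it is a $v_2$-local iso has genuine problems as written. First, the adjunction gives $[\bou_1,\Sigma^{16,1}D\bou_1] = \pi^{A(2)_*}_{-16,-1}(D\bou_1^{\otimes 2})$, not $\pi^{A(2)_*}_{16,1}$; since $D\bou_1^{\otimes 2}$ is an honest comodule this group vanishes, which correctly tells you that no such map exists \emph{before} localizing --- the equivalence is only realized as a map $\bou_1 \to \Sigma^{16,1}v_2^{-1}D\bou_1$, and the relevant $\Ext$ group is one you would have to compute in the localized category. Second, the proposed ``Poincar\'e pairing'' $\bou_1\otimes\bou_1 \to \Sigma^{10,0}\FF_2$ does not exist as a comodule map: $\bou_1 \otimes \bou_1$ has no one-dimensional trivial quotient in degree $10$ (the class $\zeta_1^4\otimes\zeta_2^2 + \zeta_2^2\otimes\zeta_1^4$ is not primitive, and in any case $\zeta_1^4\zeta_2^2$ is not the missing top cell --- the missing cell of $\bou_1\cdot\bou_1 \subset A(2)\mmod A(1)_*$ is $\zeta_1^4\zeta_2^2\zeta_3$ in degree $17$). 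Finally, the plan never actually pins down a class, only says you ``would identify'' one; to make this approach rigorous you would have to exhibit the class and prove it $v_2$-periodically nontrivial, all of which is circumvented by the short exact sequence above.
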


\begin{proof}
Consider the short exact sequence
$$ 0 \to \bou_1 \to A(2)\mmod A(1)_* \to \Sigma^{17} D\bou_1 \to 0. $$
Since we have
$$ v_2^{-1}\pi^{A(2)_*}_{*,*} A(2) \mmod A(1)_* \cong v_2^{-1}\Ext_{A(1)_*}(\FF_2,\FF_2) = 0 $$
it follows that the connecting homomorphism in $\mc{D}_{A(2)_*}$
$$ \Sigma^{17} v_2^{-1}D\bou_1 \to \Sigma^{1,-1}v_2^{-1}\bou_1 $$
is an equivalence.
\end{proof}

Our interest in the bo-Brown-Gitler comodules stems from the fact that there is a splitting of $A(2)_*$-comodules \cite[Cor.~5.5]{BHHM}:
\begin{equation}\label{eq:tmfsplitting}
 \ul{\tmf} \cong \bigoplus_{i \ge 0} \Sigma^{8i} \bou_i 
 \end{equation}
 where $\Sigma^{8j}\bou_j$ is spanned by the monomials of 
$$ \tmfu = A\mmod A(2)_* = \FF_2[\zeta^{8}_1, \zeta_2^4, \zeta_3^2, \zeta_4, \ldots] $$ 
of weight $8j$.
We therefore have a splitting of $A(2)_*$-comodules
\begin{equation}\label{eq:E1decomp}
\br{\ul{\tmf}}^{\otimes n} \cong \bigoplus_{i_1, \ldots, i_n > 0}\Sigma^{8(i_1+\cdots+i_n)}\bou_{i_1} \otimes \cdots \otimes \bou_{i_n}.
\end{equation}

The object
$$ \Sigma^{8(i_1+\cdots+i_n)}\bou_{i_1} \otimes \cdots \otimes \bou_{i_n} \in \mc{D}_{A(2)_*}  $$
can be inductively built from $\bou_1^{\otimes k}$ by means of a set of exact sequences of $A(2)_*$-comodules which relate the $\bou_i$'s \cite[Sec.~7]{BHHM}:
\begin{gather}
0\to \Sigma^{8j} \ul{\bo}_j \to \ul{\bo}_{2j}\to A(2)\mmod A(1)_* \otimes \ul{\tmf}_{j-1}  \to \Sigma^{8j+9} \ul{\bo}_{j-1} \to 0 \label{eq:boSES1},
\\
0 \to \Sigma^{8j} \ul{\bo}_j \otimes \ul{\bo}_1 \to \ul{\bo}_{2j+1}\to A(2)\mmod A(1)_* \otimes \ul{\tmf}_{j-1} \to 0. \label{eq:boSES2}
\end{gather}
Here, $\ul{\tmf}_j$ is the $j$th $\tmf$-Brown-Gitler comodule --- it is the subcomodule of $\tmfu$
spanned by monomials of weight less than or equal to $8j$.

\begin{rmk}
Technically speaking, as is addressed in \cite[Sec.~7]{BHHM}, the comodules 
$$ A(2)\mmod A(1)_* \otimes \ul{\tmf}_{j-1}$$ 
in the above exact sequences have to be given a slightly different $A(2)_*$-comodule structure from the standard one arising from the tensor product.  However, this different comodule structure ends up being $\Ext$-isomorphic to the standard one.  As the analysis of this paper only requires 
\begin{align*}
 v_2^{-1}A(2)\mmod A(1)_* \otimes \ul{\tmf}_{j-1} & \simeq 0,   \\
 g^{-1}A(2)\mmod A(1)_* \otimes \ul{\tmf}_{j-1} & \simeq 0,   
\end{align*}
and these equivalences hold for the non-standard comodule structures, the reader can safely ignore this subtlety.
\end{rmk}

Since 
$$ v_2^{-1}A(2)\mmod A(1)_* \otimes \ul{\tmf}_{j-1} \simeq 0, $$  
The exact sequences (\ref{eq:boSES1}) and (\ref{eq:boSES2}) give rise to a cofiber sequence in $\mc{D}_{A(2)_*}$
\begin{equation}\label{eq:v2boj1}
\Sigma^{8j} v_2^{-1}\ul{\bo}_j \to v_2^{-1}\ul{\bo}_{2j}\to \Sigma^{8j+8,1} v_2^{-1}\ul{\bo}_{j-1} 
\end{equation}
and an equivalence
\begin{equation}\label{eq:v2boj2}
  \Sigma^{8j} v_2^{-1}\ul{\bo}_j \otimes \ul{\bo}_1 \simeq v_2^{-1}\ul{\bo}_{2j+1}. 
 \end{equation}
Thus, (\ref{eq:v2boj1}) and (\ref{eq:v2boj2}) inductively build 
$$ v_2^{-1}\bou_i \in \mc{D}_{A(2)_*} $$ 
out of $v_2^{-1}\bou_1^{\otimes k}$.

The connecting homomorphism of the cofiber sequence (\ref{eq:v2boj1}) 
\begin{equation}\label{eq:partialj}
 \partial_j : v_2^{-1}\Sigma^{8j+8,1}\bou_{j-1} \to v_2^{-1} \Sigma^{8j+1,-1} \bo_{j}
\end{equation}
is the obstruction to the cofiber sequence being split.  We will prove in Section~\ref{sec:partialj} that the connecting homomorphism $\partial_j = 0$ for all $j$, so we have
\begin{equation}\label{eq:splittingconj}
 v_2^{-1}\ul{\bo}_{2j} \simeq v_2^{-1}\Sigma^{8j} \ul{\bo}_j \oplus v_2^{-1}\Sigma^{8j+8,1} \ul{\bo}_{j-1}.
 \end{equation}

%We end this section with some remarks on $v_0^{-1}\pi^{A(2)_*}_{*,*}$.  This material will be used in Section~\ref{sec:bo1^k} to give names to $v_0$ torsion free generators.

%Observe that we have
%\begin{equation}\label{v0localExtA2}
%v_0^{-1}\pi^{A(2)_*}_{*,*}(\FF_2) = \FF_2[v_0^{\pm}, v_1^4, v_2^2].
%\end{equation}
%Note that $c_4, c_6 \in (\tmf_*)_\QQ$ are detected in the $v_0$-localized ASS by $v_1^4$ and $v_0^3v_2^2$, respectively.
%
%We recall from \cite{BOSS} that
%\begin{equation}\label{eq:v0localtmftmf}
%v_0^{-1} \pi_{*,*}^{A(2)_*}(\ul{\tmf}) = \FF_2[v_0^{\pm}, v_1^4, v_2^2][\xib_1^8, \xib^4_2]
%\end{equation}
%and that there is an isomorphism
%\begin{equation}\label{eq:v0localboi}
% v_0^{-1}\pi_{*,*}^{A(2)_*}(\bou_i) \cong \FF_2[v_0^{\pm}, v_1^4, v_2^2]\{\xib_1^{8i'}\xib_2^{4 i''} \}_{i = i' + i''}.
%\end{equation}
%
%We therefore have:
%
%\begin{prop}\label{prop:v0E1}
%There is an isomorphism 
%$$ v_0^{-1}\pi^{A(2)_*}_{*,*}(\ul{\tmf}^{\otimes s}) \cong \FF_2[v_0^\pm, v_1^4, v_2^2]\otimes \FF_2[\xib^8_1, \xib_2^4]^{\otimes s}. $$
%\end{prop}
%
%To make for more compact notation, we will use bars to denote elements of tensor powers:
%$$ x_1|\cdots |x_n := x_1 \otimes \cdots \otimes x_n. $$

\section{The groups $\pi^{A(2)_*}_{*,*}(\bou_1^k)$
}\label{sec:bo1^k}

In the previous section we related the comodules $\bou_j$ to the comodules $\bou_1^{\otimes k}$.  We now review the structure of 
$$ \pi^{A(2)_*}_{*,*} \bou_1^{\otimes k} $$
for $0 \le k \le 4$.  For $k = 0$, this computation was initially performed by May \cite{MayA2} but was first published in \cite{ShimadaIwai}.  For $k = 1$, the computation appears in \cite{DavisMahowaldA2}.  For $k = 2,3$ these computations appeared in \cite[Sec.~6]{BHHM}, where a methodology for performing these computations for $k \ge 2$ is explained.  This same methodology was extended by the authors of this paper to perform the computation for $k = 4$.  It should be emphasized that use of the Ext software of Bruner \cite{Bruner} and Perry \cite{Perry} was crucial for the cases of $2 \le k \le 4$.

In order to give names to the $v_0$-torsion-free generators of $\piA_{*,*}(\bp{k})$, we review the corresponding $v_0$-local computations.  The entire structure of the $v_0$-local algebraic tmf resolution is given in \cite{tmfhi} (see also \cite{BOSS}). 

Observe that we have
\begin{equation}\label{v0localExtA2}
v_0^{-1}\pi^{A(2)_*}_{*,*}(\FF_2) = \FF_2[v_0^{\pm}, v_1^4, v_2^2].
\end{equation}
Note that $c_4, c_6 \in (\tmf_*)_\QQ$ are detected in the $v_0$-localized ASS by $v_1^4$ and $v_0^3v_2^2$, respectively.

We have (regarding $\bou_1$ as a subcomodule of $A\mmod A(2)_*$)
\begin{equation*}
v_0^{-1} \pi_{*,*}^{A(2)_*}(\bou_1) = \FF_2[v_0^{\pm}, v_1^4, v_2^2]\{\zeta_1^8, \zeta^4_2\}
\end{equation*}

We therefore have
an isomorphism 
\begin{equation}\label{eq:v0localbo1^k}
 v_0^{-1}\pi^{A(2)_*}_{*,*}(\bp{k}) \cong \FF_2[v_0^\pm, v_1^4, v_2^2]\otimes \FF_2\{\zeta^8_1, \zeta_2^4\}^{\otimes k}.
 \end{equation}
To make for more compact notation, we will use bars to denote elements of tensor powers:
\begin{equation}\label{eq:bar}
 x_1|\cdots |x_n := x_1 \otimes \cdots \otimes x_n.
 \end{equation}

{\bf $\pmb{\pi_{*,*}^{A(2)_*}(\FF_2):}$ (Figure \ref{fig:ExtA2})}

All of the elements are
$c_4 = v_1^4$-periodic, and $v_2^8$-periodic.  
Exactly one $v_1^4$ multiple
of each element is displayed with the $\bullet$ replaced by a $\circ$. 
Observe the wedge pattern beginning in $t-s = 35$.
This pattern is infinite, propagated horizontally by $h_{2,1}$-multiplication 
and vertically by $v_1$-multiplication.  Here, $h_{2,1}$ is the name of the
generator in the May spectral sequence of bidegree $(t-s,s) = (5,1)$, and
$h_{2,1}^4 = g$.

{\bf $\pmb{\pi_{*,*}^{A(2)_*}(\bou_1^{\otimes k}),\text{ for }k = 1,2,3,4:}$ (Figures \ref{fig:bo1}, \ref{fig:bo1^2}, \ref{fig:bo1^3}, \ref{fig:bo1^4})}

Every element is $v_2^8$-periodic.
However, unlike $\pi_{*,*}^{A(2)_*}(\FF_2)$, not every element of these Ext groups is
$v_1^4$-periodic.  Rather, it is the case that either an element $x \in
\Ext_{A(2)_*}(\bou_1^{\otimes k})$ satisfies 
$v_1^4x = 0$, or it is $v_1^4$-periodic.
Each of the $v_1^4$-periodic 
elements fit into families which look like shifted and truncated copies of
$\pi^{A(1)_*}_{*,*}(\FF_2)$, and are labeled with a $\circ$.  
We have only
included the beginning of these $v_1^4$-periodic patterns in the chart.  
The other
generators are labeled with a $\bullet$.  A $\Box$ indicates a
polynomial algebra $\FF_2[h_{2,1}]$.  Elements which are $v_0$-torsion-free are named in these charts using (\ref{eq:v0localbo1^k}), in the bar notation of (\ref{eq:bar}).

\section{An algebraic model of\text{ $\TMF_0(3)$}}\label{sec:TMF3}

%The spectrum $\TMF_0(3)$ is an analog of $\TMF$ associated to the moduli of elliptic curves with $\Gamma_0(3)$-structures \cite{MahowaldRezk}.

%Mahowald and Rezk \cite{MahowaldRezk}(as worked out in detail by Davis-Mahowald \cite{DavisMahowald}) proposed three different 2-local connective spectra whose $E(2)$-localizations all give $\TMF_0(3)$.

%We wish to imitate \cite{MahowaldRezk}, \cite{DavisMahowald} in the category $\mc{D}_{A(2)_*}$.  The statement and proof of the following mimics \cite[Lem.~3.1]{DavisMahowald}.

\bl{The spectrum $\TMF_0(3)$ is an analog of $\TMF$ associated to the moduli of elliptic curves with with $\Gamma_0(3)$-structures introduced and studied by Mahowald and Rezk \cite{MahowaldRezk}. In fact, Mahowald and Rezk proposed three different connective spectra  whose $E(2)$-localizations are $\TMF_0(3)$ (also see \cite{DavisMahowald}).}

 \bl{We will emulate \cite{MahowaldRezk,  DavisMahowald} in the category of $\mc{D}_{A(2)_*}$ to construct the $\T$. }

\begin{lem}\label{lem:h22tilde}
The composite
$$ \Sigma^{6,2}\FF_2 \xrightarrow{h_2^2} \FF_2 \hookrightarrow \Sigma^7 D\bou_1 $$
extends to a map
$$ \td{h_2^2}: \Sigma^{6,2}\bou_1 \rightarrow \Sigma^7 D\bou_1. $$
\end{lem}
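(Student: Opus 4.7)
The plan is a standard obstruction-theoretic argument. Applying $[-, \Sigma^7 D\bou_1]$ to the cofiber sequence
$$\Sigma^{6,2}\FF_2 \to \Sigma^{6,2}\bou_1 \to \Sigma^{6,2}(\bou_1/\FF_2)$$
in $\mc{D}_{A(2)_*}$, associated to the short exact sequence $0 \to \FF_2 \to \bou_1 \to \bou_1/\FF_2 \to 0$ of $A(2)_*$-comodules, produces a long exact sequence in which the extension $\td{h_2^2}$ exists iff the image $\delta(\alpha)$ of the composite $\alpha$ in the statement vanishes in the obstruction group
$$[\Sigma^{5,3}(\bou_1/\FF_2),\, \Sigma^7 D\bou_1] \;\cong\; \Ext^{3,\,1}_{A(2)_*}\bigl((\bou_1/\FF_2) \otimes \bou_1,\, \FF_2\bigr),$$
where the last isomorphism is Frobenius reciprocity for the Hopf algebra $A(2)_*$.

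To get a handle on this group, I would tensor the original short exact sequence with $\bou_1$ to obtain
$$0 \to \bou_1 \to \bou_1^{\otimes 2} \to (\bou_1/\FF_2) \otimes \bou_1 \to 0,$$
whose associated Ext long exact sequence reduces the computation of $\Ext^{3,1}((\bou_1/\FF_2) \otimes \bou_1, \FF_2)$ to the corresponding Ext groups of $\bou_1^{\otimes 2}$ and $\bou_1$, both recorded in Figures \ref{fig:bo1} and \ref{fig:bo1^2} (after dualization via $\piA_{*,*}D(-)$). The short exact sequence $0 \to \bou_1 \to A(2)\mmod A(1)_* \to \Sigma^{17}D\bou_1 \to 0$ from the proof of Proposition~\ref{prop:Dbo1} can be used to swap any residual occurrence of $D\bou_1$ for computable pieces, the latter by change of rings to $A(1)_*$.

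The main obstacle is to certify that $\delta(\alpha) = 0$. The cleanest scenario is that the obstruction group itself vanishes in this bidegree; failing that, naturality of the connecting map factors $\delta(\alpha)$ through $\delta(h_2^2) \in \Ext^{3,8}_{A(2)_*}(\bou_1/\FF_2, \FF_2)$ followed by the map induced by $\FF_2 \hookrightarrow \Sigma^7 D\bou_1$. I would examine the latter by representing $h_2^2$ in the cobar complex as $[\zeta_1^4 \,|\, \zeta_1^4]$ and chasing through the explicit basis $\{\zeta_1^4,\, \zeta_2^2,\, \zeta_3\}$ of $\bou_1/\FF_2$, expecting the output to land in a bidegree of $\Sigma^7 D\bou_1$ where the target (or the image of $\iota_*$) vanishes.
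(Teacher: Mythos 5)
The paper states this lemma without proof; it is evidently treated as a routine verification, consistent with the paper's reliance elsewhere on Ext charts and Bruner's software. Your obstruction-theoretic setup is correct: the extension exists precisely when the obstruction class vanishes in $[\Sigma^{5,3}(\bou_1/\FF_2),\,\Sigma^7 D\bou_1]$, and I have checked your degree bookkeeping, including the identification of this group with $\Ext^{3,1}_{A(2)_*}\bigl((\bou_1/\FF_2)\otimes\bou_1,\FF_2\bigr)$. One small terminological quibble: this last step is not Frobenius reciprocity but simply the $\Hom$-tensor adjunction for the Spanier--Whitehead dual ($[M, DN] \cong [M\otimes N, \FF_2]$ for $N$ finite); the underlying identification is nonetheless correct. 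The fallback route, factoring $\delta(\alpha)$ as $\iota_*\delta(h_2^2)$ with $\delta(h_2^2) \in \Ext^{3,8}_{A(2)_*}(\bou_1/\FF_2,\FF_2)$, is likewise set up correctly.

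The genuine gap is that the proposal never actually carries out the computation that would certify $\delta(\alpha)=0$. You identify two viable routes (show the obstruction group vanishes outright, or trace the image of $\delta(h_2^2)$ under $\iota_*$) but then stop at ``expecting the output to land in a bidegree where the target vanishes.'' This expectation is not established. Moreover, the promised reduction to Figures~\ref{fig:bo1} and \ref{fig:bo1^2} ``after dualization'' is not a bookkeeping triviality: those figures give $\piA_{*,*}(\bou_1)$ and $\piA_{*,*}(\bp{2})$, not $\piA_{*,*}(D\bou_1)$ or $\piA_{*,*}(D\bp{2})$, and passing to the dual requires a separate argument. The paper in fact provides the right tool for this (the short exact sequence $0\to\bou_1\to A(2)\mmod A(1)_*\to\Sigma^{17}D\bou_1\to 0$ from the proof of Proposition~\ref{prop:Dbo1}), and you mention it, but you would need to push it through to the explicit groups in bidegree $(3,1)$. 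As written, this is a correct and sensible plan for proving the lemma rather than a proof; to make it one, carry the cobar or chart computation to a conclusion, or—as the paper implicitly does—verify the relevant $\Ext$ group by machine.
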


\begin{proof}
The cell structure of $\bou_1$ implies that the obstructions to this extension is the product $h_2 \cdot h_2^2 e_7$, and the Massey products $\bra{h_1, h_2, h_2^2 e_7}$ and $\bra{h_0, h_1, h_2, h_2^2 e_7}$.  These are all zero for dimensional reasons.
\end{proof}

Our algebraic model of $\TMF_0(3)$ is defined to be 
$$ \ul{\TMF_0(3)} := v_2^{-1}(\Sigma^{24,3}D\bou_1 \cup_{\td{h_2^2}}
\Sigma^{24,4}\bou_1) $$
where $\Sigma^{24,3}D\bou_1 \cup_{\td{h_2^2}}
\Sigma^{24,4}\bou_1$ denotes (the $\Sigma^{17,3}$ suspension of) the cofiber of the map $\td{h_2^2}$ of Lemma~\ref{lem:h22tilde}.

Figure~\ref{fig:Dbo1Ubo1} shows a computation of the homotopy of $D\bou_1 \cup_{\td{h_2^2}}
\Sigma^{0,1}\bou_1$.  In this figure, the solid dots correspond to $D\bou_1$ and the open dots correspond to $\bou_1$.  One convenient way of accessing the homotopy of $D\bou_1$ is from the short exact sequence in the proof of Proposition~\ref{prop:Dbo1}.

A chart of $\piA_{*,*}(\T)$ is displayed in Figure~\ref{fig:TMF3}.
Just like in the charts of Figures~\ref{fig:bo1}, \ref{fig:bo1^2}, \ref{fig:bo1^3}, \ref{fig:bo1^4}, each of the $v_1^4$-periodic 
elements fit into families which look like shifted and truncated copies of
$\pi^{A(1)_*}_{*,*}(\FF_2)$, and are labeled with a $\circ$, and only
beginning of these $v_1^4$-periodic patterns are included in the chart.  
The other
generators are labeled with a $\bullet$.  Figure~\ref{fig:TMF3} actually only displays the homotopy groups of a connective version of $\T$ (this is the object $\ul{X}$ from the proof of Proposition~\ref{prop:ring}).  However the $v_2$-periodic homotopy groups are easily deduced from the fact that these homotopy groups are all $v_2^{8}$-periodic.  Diagram~3.4 of \cite{DavisMahowald} gives a nice visualization of what these localized homotopy groups look like.

\begin{figure}
\includegraphics[angle = 90, origin=c, height =.6\textheight]{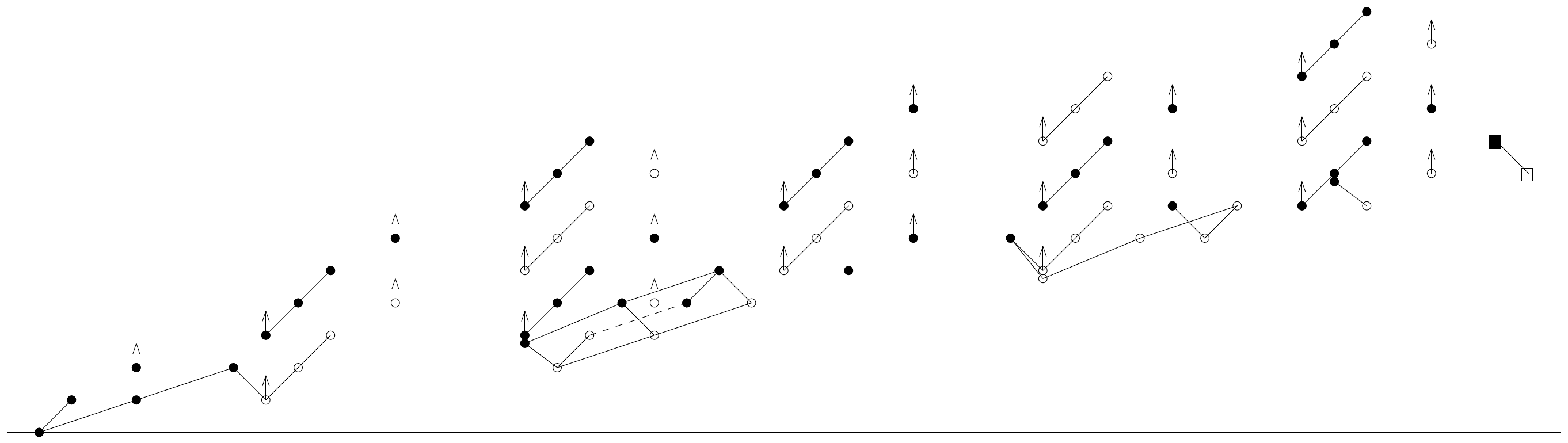}
\caption{Computing the homotopy of $D\bou_1 \cup_{\td{h_2^2}}
\Sigma^{0,1}\bou_1$.}\label{fig:Dbo1Ubo1}
\end{figure}

\begin{lem}\label{lem:rigidity}
Any map
$$ f: \ul{\TMF_0(3)} \to \ul{\TMF_0(3)} $$
which is the identity on $\pi^{A(2)_*}_{0,0}$ is an equivalence. 
\end{lem}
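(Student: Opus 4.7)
The plan is to compute $[\ul{\TMF_0(3)}, \ul{\TMF_0(3)}]_{(0,0)}$ in $\mc{D}_{A(2)_*}$ from the two-cell description of $\ul{\TMF_0(3)}$ and show that the kernel of the evaluation map to $\End_{\FF_2}(\piA_{0,0}(\ul{\TMF_0(3)})) = \FF_2$ consists of classes $e$ with $e^2 = 0$; the lemma then follows, since any such $\mathrm{id} + e$ is invertible with inverse $\mathrm{id} - e$.

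First, invoking Proposition~\ref{prop:Dbo1} to replace $v_2^{-1}D\bou_1$ by $\Sigma^{-16,-1}v_2^{-1}\bou_1$, I rewrite the defining cofiber sequence of $\ul{\TMF_0(3)}$ in the form
$$ \Sigma^{8,2}v_2^{-1}\bou_1 \xrightarrow{\,i\,} \ul{\TMF_0(3)} \xrightarrow{\,p\,} \Sigma^{25,4}v_2^{-1}\bou_1, $$
presenting $\ul{\TMF_0(3)}$ as a two-cell object whose cells are shifts of $v_2^{-1}\bou_1$. Applying $[\ul{\TMF_0(3)},-]$ to this cofiber sequence, and then $[-, \Sigma^{?} v_2^{-1}\bou_1]$ to the same sequence to compute the corners, reduces the calculation of $[\ul{\TMF_0(3)}, \ul{\TMF_0(3)}]_{(0,0)}$ to four specific groups of the form $\piA_{?,?}(v_2^{-1}\bou_1^{\otimes 2})$ (after invoking Proposition~\ref{prop:Dbo1} once more to eliminate the remaining occurrences of $D\bou_1$); these can be read off from Figure~\ref{fig:bo1^2}.

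The two diagonal groups $[\Sigma^a v_2^{-1}\bou_1, \Sigma^a v_2^{-1}\bou_1]_{(0,0)} \cong \piA_{16,1}(v_2^{-1}\bou_1^{\otimes 2})$ each admit an evaluation to $\End(\piA_{0,0}(v_2^{-1}\bou_1)) \cong \FF_2$, and the off-diagonal group $[\Sigma^{25,4}v_2^{-1}\bou_1, \Sigma^{8,2}v_2^{-1}\bou_1]_{(0,0)} \cong \piA_{-1,-1}(v_2^{-1}\bou_1^{\otimes 2})$ parametrizes ghost self-maps of the form $e = i \circ g \circ p$, each of which automatically satisfies $e^2 = 0$ since $p \circ i = 0$. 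Piecing these contributions together via the long exact sequences, and using the hypothesis $f|_{\piA_{0,0}} = \mathrm{id}$ to pin down the identity class in the diagonal piece attached to the bottom cell, I conclude that $f - \mathrm{id}$ lies in the ghost ideal and is square-zero, hence that $f$ is an equivalence.

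The main obstacle will be controlling the diagonal part: verifying that the only element of $\piA_{16,1}(v_2^{-1}\bou_1^{\otimes 2})$ acting as the identity on $\piA_{0,0}(v_2^{-1}\bou_1)$ is the identity class of $v_2^{-1}\bou_1$ (up to an error that can itself be absorbed into a square-zero correction). This reduces to a finite inspection of Figure~\ref{fig:bo1^2} in the relevant bidegrees, using the explicit generators listed in Section~\ref{sec:bo1^k} together with the $v_1$- and $h_{2,1}$-module structure to exclude any additional non-nilpotent class.
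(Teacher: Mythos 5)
Your approach is genuinely different from the paper's, and it has a real gap in the step you yourself flag as ``the main obstacle.'' You want to conclude that $f - \mathrm{id}$ lies in the ghost ideal $i \circ [-] \circ p$, but this requires $f$ to restrict to the \emph{identity} (not merely an equivalence) on both cells of the two-cell presentation. The hypothesis $f|_{\piA_{0,0}(\T)} = \mathrm{id}$ only pins down (at best) the component of $f$ on the bottom cell $\Sigma^{8,2}v_2^{-1}\bou_1$, and even there it only controls the effect on a single bidegree --- it does not a priori force $f|_{A}$ to equal $\mathrm{id}_A$ as an element of $[A,A]_{(0,0)} \cong \piA_{16,1}(v_2^{-1}\bp{2})$. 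More seriously, the hypothesis says nothing at all about the diagonal component on the \emph{top} cell $\Sigma^{24,4}v_2^{-1}\bou_1$: the map from $\piA_{0,0}(\T)$ detects nothing there, so you have no handle on that summand of $[\T,\T]_{(0,0)}$. You also do not verify that $f$ preserves the cell filtration, i.e. that $p \circ f \circ i = 0$ in $[\,\Sigma^{8,2}v_2^{-1}\bou_1,\,\Sigma^{24,4}v_2^{-1}\bou_1\,]_{(0,0)}$; without that, the decomposition of $f$ into diagonal-plus-ghost does not even make sense. (As a minor point, your degree bookkeeping for the top cell and the off-diagonal group appears to be off: with the paper's convention the top cell sits in $\Sigma^{24,4}$ and $[B,A]_{(0,0)} \cong \piA_{33,3}(v_2^{-1}\bp{2})$, not $\piA_{-1,-1}$.)

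To actually close the gap you would need a rigidity statement for $v_2^{-1}\bou_1$ itself (so that the bottom-cell diagonal is forced to be the identity by the hypothesis), together with a separate argument that the top-cell diagonal, about which the hypothesis is silent, is also a unit --- and both of these would presumably again invoke the $\piA_{*,*}(\FF_2)$-module structure of $\piA_{*,*}(v_2^{-1}\bp{2})$. At that point you would be doing roughly the same module-theoretic bookkeeping the paper does directly on $\piA_{*,*}(\T)$: the paper propagates $f(1_{\T}) = 1_{\T}$ along $g$, $v_2^4h_1$, $h_2$ to show $f$ fixes $x_{17}$ and hence is an isomorphism on $v_0^{-1}\piA_{*,*}(\T)$, from which the $\piA_{*,*}$-isomorphism follows. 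Your strategy of computing $[\T,\T]_{(0,0)}$ and exhibiting it as a local ring with residue field $\FF_2$ is a legitimate alternative framing and would give slightly more information (the full endomorphism ring), but as written it postpones rather than solves the essential step, and in the end you would still need the module-structure analysis the paper carries out.
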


\begin{proof}
%The following Massey products are non-trivial, and represent generators.
%\begin{align}
%\bra{h_0, h_1, h_1^2 \cdot 1_{\ul{\TMF_0(3)}}} & \in \pi^{A(2)_*}_{4,3}(\ul{\TMF_0(3)}), \label{eq:Massey1}, \\
%\bra{h_0, h_1, h_1 x_{17}} & \in \pi^{A(2)_*}_{20,5}(\ul{\TMF_0(3)}),  \label{eq:Massey2} \\
%\bra{h_0, h_2^2, x_{17}} & \pi^{A(2)_*}_{24,5}(\ul{\TMF_0(3)}). \label{eq:Massey3}.
%\end{align}
%It follows from (\ref{eq:Massey1}) that $f$ in induces an isomorphism on $\pi^{A(2)_*}_{4,3}$.  It then follows 
%from the $\pi^{A(2)_*}_{*,*}(\FF_2)$-module structure of $\pi^{A(2)_*}_{*,*}(\ul{\TMF_0(3)}$ that $f$ induces an isomorphism on $\pi^{A(2)_*}_{24,5}$.
%Suppose $f$ induced the zero map in $\pi^{A(2)_*}_{17,3}$.  Then by (\ref{eq:Massey3}), this would imply that $f$ induced the zero map on $\pi^{A(2)_*}_{17,3}$, a contradiction.  Thus $f$ is an isomorphism in $\pi^{A(2)_*}_{17,3}$.  It follows from (\ref{eq:Massey2}) that $f$ is an isomorphism in $\pi^{A(2)_*}_{20,5}$. It then follows 
%from the $\pi^{A(2)_*}_{*,*}(\FF_2)$-module structure of $\pi^{A(2)_*}_{*,*}(\ul{\TMF_0(3)})$ that $f$ induces an isomorphism on all of $\pi^{A(2)_*}_{*,*}$.
Let $1_{\T} \in \piA_{0,0}(\T)$ denote the generator.
The $\piA_{*,*}(\FF_2)$-module structure implies $f$ is the identity on 
$g\cdot 1_{\T}$ and $v_2^4h_1$.
It follows from $h_2$ linearity that $f$ is the identity on $x_{17}$ \bl{(see Figure~\ref{fig:TMF3})}.  Therefore $f$ is the identity on $v_2^4 h_1 x_{17}$.
It follows from $h_0$, $h_1$, $h_2$, and $v_1^4$ linearity that $f$ is an isomorphism on   $v_0^{-1}\piA_{*,*}(\T)$.  Here we must use the fact that the $v_0$-localization of $f$ is a map of $v_0^{-1}\pi_{*,*}(\FF_2)$-modules.  It then follows that $f$ is a $\piA_{*,*}$-isomorphism. 
\end{proof}

We have the following algebraic version of the Recognition Principle of Davis-Mahowald-Rezk (see \cite[Prop.~7.2]{MahowaldRezk}).

\begin{thm}[Recognition Principle]\label{thm:recprinc}
Suppose that $X \in \mc{D}_{A(2)_*}$ satisfies
\begin{equation}\label{eq:recprinciso}
 \pi^{A(2)_*}_{*,*}(X) \cong \pi^{A(2)_*}_{*,*}(\ul{\TMF_0(3)})
 \end{equation}
where the above isomorphism
preserves $v_0$, $h_1$, $h_2$, $v_1^4$, $v_0v_2^2$, $v_2^8$, $v_2^4h_1$, and $g$ multiplications.
Then there is an equivalence
$$ X \simeq \ul{\TMF_0(3)}. $$
\end{thm}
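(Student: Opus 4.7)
The plan is to construct a map $f\colon \T \to X$ that is the identity on $\piA_{0,0}$, and then argue it is an equivalence. First observe that both $\T$ and $X$ are $v_2$-local, since by hypothesis their homotopy groups agree as modules over $v_2^8$ and $\T$ is $v_2$-local by construction. Thus we may work entirely in the $v_2$-local subcategory. Realizing the defining cofiber presentation
\begin{equation*}
\Sigma^{24,3}v_2^{-1}D\bou_1 \xrightarrow{\Sigma^{24,3}\td{h_2^2}} \Sigma^{24,4}v_2^{-1}\bou_1 \to \T
\end{equation*}
inside $X$ amounts to producing a map $g\colon \Sigma^{24,4}v_2^{-1}\bou_1 \to X$ whose composite with the attaching map is null, then extending by one step.

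To produce $g$, I would exploit Proposition~\ref{prop:Dbo1} together with the explicit structure of $\piA_{*,*}(v_2^{-1}\bou_1)$ recorded in Figure~\ref{fig:bo1}: a map out of $v_2^{-1}\bou_1$ into any $v_2$-local object is pinned down by its effect on a small set of $\piA_{*,*}(\FF_2)$-module generators, and these generators live in bidegrees where the preserved operations $v_0,h_1,h_2,v_1^4,v_0 v_2^2,v_2^8,v_2^4 h_1,g$ act faithfully. The class corresponding under the given $\piA_{*,*}$-isomorphism to the image of the generator $x_{17} \in \piA_{*,*}(\T)$ from the top cell of the cofiber, together with the preserved multiplications, determines the required values of $g$ on generators.

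Next I would verify that the obstruction $g \circ \Sigma^{24,3}\td{h_2^2}$ vanishes. Because $\td{h_2^2}$ is itself constructed as an extension of $h_2^2$ and the hypothesized isomorphism preserves $h_2$-multiplication, this composite corresponds under the $\piA_{*,*}$-isomorphism to the analogous composite in $\T$, which is null by the very definition of $\T$. A chosen null-homotopy yields the desired extension $f\colon \T \to X$, and by construction $f$ induces the prescribed isomorphism on $\piA_{0,0}$. Now I mimic the argument of Lemma~\ref{lem:rigidity}: the preserved multiplications by $h_1$, $h_2$, $v_1^4$, $v_0 v_2^2$, $v_2^8$, $v_2^4 h_1$, and $g$, applied to the two classes $1_{\T}$ and $x_{17}$, produce every generator of $\piA_{*,*}(\T)$, and $v_0$-linearity together with the preservation of $v_0v_2^2$ and $v_1^4$ handles the $v_0$-local part. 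Thus $f$ is a $\piA_{*,*}$-isomorphism, and since weak equivalences in $\mc{D}_{A(2)_*}$ are by definition $\piA_{*,*}$-isomorphisms, $f$ is an equivalence.

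The main obstacle is the nullity of $g \circ \Sigma^{24,3}\td{h_2^2}$: one must confirm that the preserved multiplicative structure is rich enough to force this composite—which a priori is only controlled up to indeterminacy by $h_2$-linearity—to actually vanish in $X$, rather than merely pair trivially with the tested operations. As in the rigidity lemma, once $v_0$, $h_1$, $h_2$, and $v_1^4$ linearity are imposed, the possible lifts form a coset whose indeterminacy is then eliminated by the further preserved operations. Pinning this down is the technical heart of the proof.
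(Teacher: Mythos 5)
Your high-level strategy — build a map $\T \to X$ cell by cell starting from the bottom class, then invoke rigidity to upgrade it to an equivalence — is the same as the paper's, but the internal structure of $\T$ is garbled in a way that would make the argument fail. The object $\T$ is defined as $v_2^{-1}\bigl(\Sigma^{24,3}D\bou_1 \cup_{\td{h_2^2}} \Sigma^{24,4}\bou_1\bigr)$: the piece $\Sigma^{24,3}D\bou_1$ is the \emph{base}, containing the bottom cell $\Sigma^{17,3}\FF_2$, and $\Sigma^{24,4}\bou_1$ is the cone attached along $\td{h_2^2}$. The cofiber presentation you write, $\Sigma^{24,3}D\bou_1 \xrightarrow{\td{h_2^2}} \Sigma^{24,4}\bou_1 \to \T$, is wrong on two counts: the source and target of $\td{h_2^2}$ are swapped ($\td{h_2^2}$ maps a suspension of $\bou_1$ into a suspension of $D\bou_1$, not the reverse), and the order of the terms in the triangle is backwards. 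Consequently, the plan of first producing a map $g\colon \Sigma^{24,4}v_2^{-1}\bou_1 \to X$ does not produce a map out of any subobject of $\T$, so one cannot extend it to a map $\T\to X$; one must start with a map out of $\Sigma^{24,3}D\bou_1$. Relatedly, $x_{17}$ generates the \emph{bottom} cell, not the ``top cell of the cofiber.''

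The correct order is what the paper does: take $x_{17}\colon \Sigma^{17,3}\FF_2 \to X$ representing the generator, extend it one cell at a time over $\Sigma^{24,3}D\bou_1$ (the obstructions lie in $\piA_{17,4}(X), \piA_{19,4}(X), \piA_{23,4}(X)$, all zero by the hypothesized isomorphism with $\piA_{*,*}(\T)$), then extend over the cells of $\Sigma^{24,4}\bou_1$ (obstructions in $\piA_{23,5}, \piA_{27,5}, \piA_{29,5}, \piA_{30,5}$, again all zero), and $v_2$-localize. This also addresses what you flag as the ``technical heart'': the vanishing of the obstruction to extending past the attaching map requires no delicate multiplicativity argument, since the obstruction group $\piA_{23,5}(X)$ is simply zero by (\ref{eq:recprinciso}). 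The multiplicative hypotheses on the isomorphism are used at the last step, to force the resulting map to be a $\piA_{*,*}$-isomorphism, not to control the extension obstructions.
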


\begin{proof}
Let
$$ x_{17} : \Sigma^{17,3} \FF_2 \rightarrow X $$
represent the generator of $\pi^{A(2)_*}_{17,3}(X)$.  Since 
$$ \pi^{A(2)_*}_{17,4}(X) = \pi^{A(2)_*}_{19,4}(X) = \pi^{A(2)_*}_{23,4}(X) = 0, $$
there exists an extension of $x_{17}$ to a map
$$ \Sigma^{24,3}D\bou_1 \to X. $$ 
Since
$$ \pi^{A(2)_*}_{23,5}(X) = \pi^{A(2)_*}_{27,5}(X) = \pi^{A(2)_*}_{29,5}(X) = \pi^{A(2)_*}_{30,5}(X) = 0 $$
there exists a further extension of this map to a map
$$ \Sigma^{24,3}D\bou_1 \cup \Sigma^{24,4}\bou_1 \to X. $$
The conditions on the isomorphism (\ref{eq:recprinciso}) imply that $X \simeq v_2^{-1}X$.  Thus the map above localizes to a map
$$ v_2^{-1}(\Sigma^{24,3}D\bou_1 \cup \Sigma^{24,4}\bou_1) \to X. $$
The conditions on the isomorphism (\ref{eq:recprinciso}) then force the map above to be a $\pi^{A(2)_*}_{*,*}$-isomorphism.
\end{proof}

For us, \emph{a weak ring object} in $\mc{D}_{A(2)_*}$ is an object $R \in \mc{D}_{A(2)_*}$ with a unit
$$ u: \FF_2 \to R $$ 
and a multiplication
$$ m: R \otimes R \to R $$
such that the two composites
\begin{gather*}
R \otimes \FF_2 \xrightarrow{1 \otimes u} R \otimes R \xrightarrow{m} R,
\\
\FF_2 \otimes R \xrightarrow{u \otimes 1} R \otimes R \xrightarrow{m} R
\end{gather*}
are equivalences.

\begin{prop}\label{prop:ring}
$\ul{\TMF_0(3)}$ is a weak ring object in $\mc{D}_{A(2)_*}$.
\end{prop}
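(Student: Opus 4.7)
Take $u : \FF_2 \to \T$ to be the map representing the generator $1_\T \in \piA_{0,0}(\T) = \FF_2$ visible in Figure~\ref{fig:TMF3}. Once a multiplication $m : \T \otimes \T \to \T$ satisfying $m \circ (u \otimes \mr{id}) = \mr{id}$ has been constructed, the second unit composite $m \circ (\mr{id} \otimes u) : \T \to \T$ sends the generator to $m(1_\T \otimes 1_\T) = 1_\T$, since both $(\mr{id} \otimes u)(1_\T)$ and $(u \otimes \mr{id})(1_\T)$ equal $1_\T \otimes 1_\T \in \piA_{0,0}(\T \otimes \T)$. Thus $m \circ (\mr{id} \otimes u)$ is the identity on $\piA_{0,0}$ and, by Lemma~\ref{lem:rigidity}, an equivalence. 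The content of the proposition is therefore the construction of a retraction of $u \otimes \mr{id} : \T \to \T \otimes \T$.

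To produce this retraction, I would smash the (already $v_2$-localized) defining cofiber sequence
\[
\Sigma^{24,3} v_2^{-1} D\bou_1 \to \T \to \Sigma^{25,4} v_2^{-1} \bou_1
\]
with $\T$. By Theorem~\ref{thm:mainsplittings}, $\T \otimes \bou_1 \simeq \Sigma^{24,3} \T \oplus \Sigma^{40,6} \T$, and combining this with Proposition~\ref{prop:Dbo1} gives $\T \otimes v_2^{-1} D\bou_1 \simeq \Sigma^{8,2} \T \oplus \Sigma^{24,5} \T$. Both ends of the resulting cofiber sequence for $\T \otimes \T$ are therefore direct sums of bigraded suspensions of $\T$ sitting in strictly positive bidegrees, so $\T \otimes \T$ admits a finite iterated filtration by positive bigraded shifts of $\T$.

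The existence of a retraction $m$ of $u \otimes \mr{id}$ then reduces to the vanishing of an obstruction class in $[\mr{cofib}(u \otimes \mr{id}), \Sigma \T]$. Using the filtration of $\mr{cofib}(u \otimes \mr{id})$ inherited from the analysis above, this obstruction is detected by a finite collection of classes in groups of the form $[\Sigma^{a,b}\T, \Sigma \T]$ for the positive bidegrees $(a,b) \in \{(32,5),(48,8),(49,7),(65,10)\}$ that appear in the filtration. Each such group is computable from the cell structure of $\T$ and the chart in Figure~\ref{fig:TMF3}, and should vanish for degree reasons. The main obstacle is exactly this final bookkeeping---precisely identifying which bidegrees contribute and verifying the vanishing of $[\Sigma^{a,b}\T, \Sigma \T]$ in each---which is routine but requires care to track the attaching maps across the iterated extension.
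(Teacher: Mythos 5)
Your approach is fundamentally different from the paper's, but it has a genuine circularity that makes it invalid as written.

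You invoke Theorem~\ref{thm:mainsplittings}, in particular the splitting $\T \otimes \bou_1 \simeq \Sigma^{24,3}\T \oplus \Sigma^{40,6}\T$ (Proposition~\ref{prop:split2}), to filter $\T \otimes \T$ by shifts of $\T$. But the proof of Proposition~\ref{prop:split2} in the paper \emph{requires} the ring structure on $\T$: after establishing an abstract isomorphism of $\piA_{*,*}$-groups, the crucial step is extending the map $\Sigma^{24,3}\FF_2 \oplus \Sigma^{40,6}\FF_2 \to \T \otimes \bou_1$ to a map of $\T$-modules $\Sigma^{24,3}\T \oplus \Sigma^{40,6}\T \to \T \otimes \bou_1$ via the multiplication $m \colon \T \otimes \T \to \T$. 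Without Proposition~\ref{prop:ring} in hand, the $\piA_{*,*}$-isomorphism does not upgrade to an equivalence in $\mc{D}_{A(2)_*}$, and you only have a filtration of $\piA_{*,*}(\T \otimes \T)$ rather than a filtration of the object $\T \otimes \T$ itself. This is exactly the order of logic in the paper: Proposition~\ref{prop:ring} is proven in Section~4, and Propositions~\ref{prop:split1}--\ref{prop:split2} come afterward in Section~5. So your argument proves the ring structure assuming the ring structure.

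Even setting aside the circularity, your proposal is incomplete in a second respect: you frame the argument as an obstruction-vanishing reduction and then acknowledge that verifying $[\Sigma^{a,b}\T, \Sigma^{-1,1}\T] = 0$ in the relevant bidegrees is ``the main obstacle'' and has not been carried out. That is precisely the kind of step that cannot be waved away --- $\T$ has a nontrivial cell structure built from $D\bou_1$ and $\bou_1$, and computing maps between shifts of $\T$ requires genuine input beyond Figure~\ref{fig:TMF3}.

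The paper avoids both issues by a different route: it constructs an explicit $A_*$-comodule model $\ul{X} = \FF_2 \cup_{\td{h_0h_2}} \Sigma^{4,1}\ul{Y}$ following Davis--Mahowald--Rezk, identifies $v_2^{-1}\ul{X} \simeq \T$ via the Recognition Principle (Theorem~\ref{thm:recprinc}), and produces the multiplication by lifting the composite $\ul{X} \otimes \ul{X} \to A(2)\mmod A(1)_*$ through the cofiber sequence~(\ref{eq:cofiberf}). The single obstruction to that lift lives in $[\ul{X} \otimes \ul{X}, \ul{C}]_{A(2)_*}$ and is shown to vanish by a direct computation with Bruner's Ext software. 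The identity-on-bottom-cell argument combined with Lemma~\ref{lem:rigidity} then gives the unit equivalences, exactly as you observe in your first paragraph (that part of your reasoning is correct and matches the paper). If you want a self-contained route, you would need to establish the $\T \otimes \bou_1$ splitting without the module-map shortcut, or else follow the paper and produce the multiplication on an un-localized model where the relevant mapping group can be computed directly.
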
  

\begin{proof}
We shall need to imitate the ``first model'' of \cite{MahowaldRezk}, \cite{DavisMahowald}.
Start with the $A_*$-comodule $\ul{Y}$ described in \cite[Thm.~2.1(a)]{DavisMahowald}.
Then the method of proof for \cite[Thm.~2.1(b)]{DavisMahowald} shows that there exists a map
$$ \td{h_0h_2} : \Sigma^{3,2}\ul{Y} \to \FF_2 $$
in $\mc{D}_{A_*}$ extending $h_0h_2$, so we can take the cofiber
$$ \ul{X} := \FF_2 \cup_{\td{h_0h_2}} \Sigma^{4,1} \ul{Y}. $$
Regarding this cofiber as an object of $\mc{D}_{A(2)_*}$, define
$$ R := v_2^{-1}\ul{X} \in \mc{D}_{A(2)_*}. $$
We will show (a) $R \simeq \ul{\TMF_0(3)}$ and (b) $R$ is a ring object of $\mc{D}_{A(2)_*}$.

For (a), we will compute $\pi^{A(2)_*}_{*,*}(R)$.  To this end, we observe that the methods of the proof of \cite[Thm.~2.1(c)]{DavisMahowald} show that there is a map
$$ f: \ul{X} \to A(2)\mmod A(1)_* $$
which extends the inclusion $\FF_2 \hookrightarrow A(2)\mmod A(1)_*$.  Let $\ul{C}$ be the cofiber of $f$:
\begin{equation}\label{eq:cofiberf}
\ul{X} \xrightarrow{f} A(2)\mmod A(1)_* 
\rightarrow \ul{C}.
\end{equation}
Then the proof of \cite[Thm.~2.1(d)]{DavisMahowald} shows that
$$ \pi^{A(2)_*}_{*,s}(A(2)_* \otimes \ul{C}) \cong 
\begin{cases}
\Sigma^4 A(2)/A(2)(\sq^4, \sq^5\sq^1 )_*, & s = 0, \\
0, & s > 0.
\end{cases}
$$
as an $A(2)_*$-comodule.  
The $A(2)_*$-based Adams spectral sequence for $\ul{C}$ then collapses to give an isomorphism
$$ \pi^{A(2)_*}_{n,s}(\ul{C}) \cong \Ext_{A(2)_*}^{s+n,s}(\FF_2, \Sigma^4 A(2)/A(2)(\sq^4, \sq^5\sq^1 )_*). $$
These Ext groups were computed in \cite[Thm.~2.9]{DavisMahowald}.
The cofiber sequence (\ref{eq:cofiberf}) gives an equivalence
$$ R \simeq \Sigma^{-1,1}v_2^{-1}\ul{C}. $$
We see by inspection of Davis-Mahowald's Ext computation alluded to above that there is an isomorphism
$$ \pi^{A(2)}_{*,*}(\Sigma^{-1,1}v_2^{-1}\ul{C}) \cong \pi^{A(2)_*}_{*,*}(\ul{\TMF_0(3)}) $$
satisfying the hypotheses of the Recognition Principle (Theorem~\ref{thm:recprinc}).  We deduce that there is an equivalence
$$ \ul{\TMF_0(3)} \simeq R. $$

We now just need to prove $R$ is a ring object in $\mc{D}_{A(2)_*}$.  For this we imitate the proof of \cite[Thm.~2.1(e)]{DavisMahowald}.  Namely, consider the composite
$$ \br{m}: \ul{X} \otimes \ul{X} \xrightarrow{f \otimes f} A(2)\mmod A(1)_* \otimes A(2) \mmod A(1)_* \xrightarrow{\mu} A(2)\mmod A(1)_*. $$
By the cofiber sequence (\ref{eq:cofiberf}), the map $\br{m}$ lifts to a map
$$ m: \ul{X} \otimes \ul{X} \to \ul{X} $$
if the composite
$$ \ul{X} \otimes \ul{X} \xrightarrow{\br{m}} A(2)\mmod A(1)_* \to \ul{C} $$
is null.  In the proof of \cite[Thm.~2.1(e)]{DavisMahowald}, it is established using Bruner's Ext software that 
$$ [\ul{X} \otimes \ul{X}, \ul{C}]_{A(2)_*} = 0. $$
Therefore, the lift $m$ exists.  Since it is a lift of $\br{m}$, it is the identity on the bottom cell.  It follows that the composites
\begin{gather*}
\ul{X} \otimes \FF_2 \hookrightarrow \ul{X} \otimes \ul{X} \xrightarrow{m} \ul{X},
\\
\FF_2 \otimes \ul{X} \hookrightarrow \ul{X} \otimes \ul{X} \xrightarrow{m} \ul{X}
\end{gather*}
are the identity on the bottom cell.  It follows from Lemma~\ref{lem:rigidity} that after $v_2$-localization, the composites 
\begin{gather*}
R \otimes \FF_2 \hookrightarrow R \otimes R \xrightarrow{m} R,
\\
\FF_2 \otimes R \hookrightarrow R \otimes R \xrightarrow{m} R
\end{gather*}
 are equivalences.  Thus $m$ gives $R$ the structure of a weak ring object.  (In fact, the analog of Lemma~\ref{lem:rigidity} holds for $\ul{X}$, and so $\ul{X}$ is also a weak ring object.)
\end{proof}

\section{Splitting $\bou_1^{\otimes k}$
}\label{sec:splitting}

In this section we prove our main $v_2$-local splitting theorems, which will be the basis of all of our subsequent $v_2$-local decomposition results.

\begin{prop}\label{prop:split1}
There is a splitting
$$ v_2^{-1} \bp{3} \simeq 2\Sigma^{16,1}v_2^{-1}\bou_1 \oplus \Sigma^{24,2} \T. $$
\end{prop}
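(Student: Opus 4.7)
The plan is to construct explicit maps
$$\phi_1, \phi_2 : \Sigma^{16,1}v_2^{-1}\bou_1 \to v_2^{-1}\bp{3} \quad \text{and} \quad \psi : \Sigma^{24,2}\T \to v_2^{-1}\bp{3},$$
and show that their wedge induces an isomorphism on $\piA_{*,*}$. Since weak equivalences in $\mc{D}_{A(2)_*}$ are precisely $\piA_{*,*}$-isomorphisms, this will prove the splitting.

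The maps $\phi_1, \phi_2$ come from the tensor structure $\bp{3} = \bou_1^{\otimes 3}$. The $v_0$-torsion-free generator $\xib_1^8 : \Sigma^{16,1}\FF_2 \to \bou_1$ from \eqref{eq:v0localbo1^k} can be paired with the unit $u : \FF_2 \to \bou_1$ in one slot, leaving a third slot as the identity of $\bou_1$, to yield a map $\Sigma^{16,1}\bou_1 \to \bp{3}$. There are three such maps---one per choice of which target tensor slot carries the source $\bou_1$---and $v_2$-localizing any two of them supplies $\phi_1$ and $\phi_2$.

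For $\psi$, I would choose a bottom class in bidegree $(24,2)$, the natural candidate being $\xib_2^4 \otimes \xib_2^4 \otimes 1 \in \piA_{24,2}(\bp{3})$ (built from $\xib_2^4 \in \piA_{*,*}(\bou_1)$ together with the unit in the third slot), and then extend cell-by-cell over $\Sigma^{24,2}\T$. Using the defining cofiber presentation $\T = v_2^{-1}(\Sigma^{24,3}D\bou_1 \cup_{\td{h_2^2}} \Sigma^{24,4}\bou_1)$ together with Proposition~\ref{prop:Dbo1} to rewrite $v_2^{-1}D\bou_1 \simeq \Sigma^{-16,-1}v_2^{-1}\bou_1$, this extension reduces to: (a) extending the bottom class to a map out of a shifted copy of $v_2^{-1}\bou_1$ using standard $\bou_1$-obstruction theory on the cells of $\bou_1$, and (b) extending further over the $\td{h_2^2}$-attaching cell, which requires verifying a single $h_2^2$-extension relation in $\piA_{*,*}(\bp{3})$ that can be read off Figure~\ref{fig:bo1^3}. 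With the three maps $\phi_1, \phi_2, \psi$ in hand, the wedge's effect on $\piA_{*,*}$ is compared to $\piA_{*,*}(\bp{3})$ (Figure~\ref{fig:bo1^3}) using the known $\piA$-structures of the summands (Figures~\ref{fig:bo1} and~\ref{fig:TMF3}), confirming a bidegree-by-bidegree isomorphism.

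\emph{The main obstacle.} The crux is step (b): verifying that the chosen $(24,2)$-bottom class carries exactly the $h_2^2$-extension matching $\td{h_2^2}$ in the definition of $\T$. If this extension were absent or of a different form, the putative $\T$-summand would have to be replaced by a twisted cell complex attached nontrivially to the $\bou_1$-summands, and the splitting would fail as stated. Once this extension identification is pinned down from the chart, the remaining bidegree-by-bidegree comparison is direct (if laborious) chart-level bookkeeping.
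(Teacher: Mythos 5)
Your strategy of constructing explicit maps $\phi_1,\phi_2,\psi$ into $v_2^{-1}\bp{3}$ and checking a $\piA_{*,*}$-isomorphism is sound in spirit, but it misses the device that makes the paper's argument go through and leaves a genuine gap in the construction of $\psi$. The paper first uses the $C_3$-action permuting the three tensor factors: since $3$ is invertible in $\FF_2$, there is an idempotent splitting $\bp{3}\simeq(\bp{3})^{hC_3}\oplus B$ \emph{before} doing any homotopy theory. The $\T$-summand is then identified with $v_2^{-1}(\bp{3})^{hC_3}$ by the Recognition Principle (Theorem~\ref{thm:recprinc}), which applies because the target has \emph{exactly} $\piA_{*,*}(\T)$, so every obstruction group in the cell-by-cell extension is already known to vanish. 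Your plan instead tries to run the same extension with target $v_2^{-1}\bp{3}$, which is strictly larger — its homotopy carries extra contributions from the two $\Sigma^{16,1}v_2^{-1}\bou_1$ summands — so the obstruction groups you would need to vanish do not obviously do so. Moreover, the building block $\Sigma^{24,3}D\bou_1\cup_{\td{h_2^2}}\Sigma^{24,4}\bou_1$ for $\T$ has several cells beyond the bottom one (three more in each of the $D\bou_1$ and $\bou_1$ pieces, plus the $\td{h_2^2}$-attachment), so the claim that only ``a single $h_2^2$-extension relation'' must be checked substantially understates the work. Finally, your proposed starting class sits in $\piA_{24,2}$, but the bottom \emph{cell} of $\Sigma^{24,2}\T$'s defining finite complex lies at $(41,5)$ — one must start there, as the Recognition Principle does at $(17,3)$ before the shift; the class at $(24,2)$ is the unit obtained after $v_2$-localization, not a place from which cellular obstruction theory can launch.

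In short, you would need to prove that, in each of the relevant bidegrees, the obstruction either lands in a zero group of $v_2^{-1}\bp{3}$ or else vanishes for a reason you have not supplied; the paper sidesteps all of this by splitting off $(\bp{3})^{hC_3}$ equivariantly and only running the Recognition Principle on that piece. Your $\phi_1,\phi_2$ essentially coincide with the paper's $x_{16}\otimes 1,\,1\otimes x_{16}$, but in the paper they are composed with the canonical projection $v_2^{-1}\bp{3}\to v_2^{-1}B$, so one only needs an isomorphism onto $v_2^{-1}B$ rather than a three-term isomorphism onto the whole of $v_2^{-1}\bp{3}$.
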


\begin{proof}
Since we are working in characteristic 2, there is a decomposition
$$ \bp{3} \simeq (\bp{3})^{hC_3} \oplus B $$
where $C_3$ acts by cyclically permuting the terms, and we have 
$$ \pi^{A(2)_*}_{*,*}((\bp{3})^{hC_3}) = \pi^{A(2)_*}_{*,*}(\bp{3})^{C_3}. $$
It is easily checked, using the names of the generators in Figure~\ref{fig:bo1^3}, that there is an isomorphism
$$ v_2^{-1}\pi^{A(2)_*}_{*,*}((\bp{3})^{hC_3}) \cong \piA_{*,*}(\T). $$
A direct application of the Recognition Principle (Theorem~\ref{thm:recprinc}) shows that
$$ v_2^{-1}(\bp{3})^{hC_3} \simeq \Sigma^{24,2} \T. $$
Let
$$ x_{16}: \Sigma^{16,1} \FF_2 \to \bou_1^{\otimes 2} $$
correspond to the generator of $\pi_{16,1}^{A(2)_*}(\bp{2})$.  Then the composite
$$ \Sigma^{16,1} v_2^{-1}\bou_1 \oplus \Sigma^{16,1} v_2^{-1}\bou_1 \xrightarrow{x_{16} \otimes 1 \oplus 1 \otimes x_{16}} v_2^{-1}\bp{3} \to v_2^{-1}B $$
is seen to be a $\pi^{A(2)_*}_{*,*}$-isomorphism, hence an equivalence.
\end{proof}

\begin{prop}\label{prop:split2}
There is a splitting
$$ \T \otimes \bou_1 \simeq \Sigma^{24,3}\T \oplus \Sigma^{40,6}\T. $$
\end{prop}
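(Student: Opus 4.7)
The plan is to mimic the strategy of Proposition~\ref{prop:split1}: compute $\piA_{*,*}(\T \otimes \bou_1)$, identify two putative summands by inspection, build explicit maps realizing them using the weak ring structure on $\T$ (Proposition~\ref{prop:ring}), and verify the resulting map is an equivalence by checking it is a $\piA_{*,*}$-isomorphism.

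First, I would determine $\piA_{*,*}(\T \otimes \bou_1)$ by tensoring the splitting of Proposition~\ref{prop:split1} with $\bou_1$ to obtain
$$ v_2^{-1}\bou_1^{\otimes 4} \simeq 2\Sigma^{16,1}v_2^{-1}\bou_1^{\otimes 2} \oplus \Sigma^{24,2}(\T \otimes \bou_1). $$
Subtracting the contribution of the first summand (Figure~\ref{fig:bo1^2}) from the chart of $\piA_{*,*}(v_2^{-1}\bou_1^{\otimes 4})$ (Figure~\ref{fig:bo1^4}) leaves $\piA_{*,*}(\T \otimes \bou_1)$, and by direct inspection this residue splits, as a module over the actions of $v_0, h_1, h_2, v_1^4, v_2^8, g$, as two shifted copies of $\piA_{*,*}(\T)$ (Figure~\ref{fig:TMF3}), with bottom generators in bidegrees $(24,3)$ and $(40,6)$.

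Second, I would pick classes $a_1 : \Sigma^{24,3}\FF_2 \to \T \otimes \bou_1$ and $a_2 : \Sigma^{40,6}\FF_2 \to \T \otimes \bou_1$ representing these two generators, and extend each $a_i$ to a $\T$-module map using the multiplication $m : \T \otimes \T \to \T$ from Proposition~\ref{prop:ring}:
$$ a_i^\sharp : \Sigma^{n_i,s_i}\T \xrightarrow{1_{\T} \otimes a_i} \T \otimes \T \otimes \bou_1 \xrightarrow{m \otimes 1_{\bou_1}} \T \otimes \bou_1. $$
Since each $a_i^\sharp$ sends the unit $1_{\T}$ to $a_i$ and is $\piA_{*,*}(\T)$-linear, the identification in the first step implies that $a_i^\sharp$ realizes the inclusion of the $i$th summand $\Sigma^{n_i,s_i}\piA_{*,*}(\T)$ into $\piA_{*,*}(\T \otimes \bou_1)$. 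Therefore the sum
$$ a_1^\sharp \oplus a_2^\sharp : \Sigma^{24,3}\T \oplus \Sigma^{40,6}\T \to \T \otimes \bou_1 $$
is a $\piA_{*,*}$-isomorphism, hence an equivalence in $\mc{D}_{A(2)_*}$.

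The main obstacle is the first step: verifying, by careful comparison of Figures~\ref{fig:bo1^2}, \ref{fig:bo1^4}, and \ref{fig:TMF3}, that $\piA_{*,*}(\T \otimes \bou_1)$ is really a free $\piA_{*,*}(\T)$-module on two generators in the claimed bidegrees, with no nontrivial $\piA_{*,*}(\T)$-module extensions between the two copies. This is the same flavor of chart-bookkeeping used in Proposition~\ref{prop:split1} and the rigidity argument of Lemma~\ref{lem:rigidity}.
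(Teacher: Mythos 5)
Your proposal follows essentially the same route as the paper's proof: tensor the Proposition~\ref{prop:split1} splitting with $\bou_1$ to isolate $\Sigma^{24,2}\T \otimes \bou_1$ inside $v_2^{-1}\bou_1^{\otimes 4}$, identify the residual $\piA_{*,*}$ by chart comparison as two shifted copies of $\piA_{*,*}(\T)$, and then use the weak ring structure of $\T$ from Proposition~\ref{prop:ring} to extend the two bottom-cell classes to a map of $\T$-modules that one checks is a $\piA_{*,*}$-isomorphism. The only cosmetic difference is that the paper also explicitly records compatibility with $v_0v_2^2$ and $v_2^4h_1$ multiplications in the bookkeeping step, which you should include when carrying out the chart verification.
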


\begin{proof}
Tensoring the splitting of Proposition~\ref{prop:split1} with $\bou_1$, we have 
$$ v_2^{-1} \bp{4} \simeq 2\Sigma^{16,1} v_2^{-1}\bp{2} \oplus \Sigma^{24,2}\T \otimes \bou_1.$$
Examination of $\piA_{*,*}(\bp{4})$ (Figure~\ref{fig:bo1^4}) reveals that
\begin{multline*}
 \piA_{*,*}(v_2^{-1} \bp{4}) \simeq 
 \\
 2\piA_{*,*}(\Sigma^{16,1} v_2^{-1}\bp{2}) \oplus \piA_{*,*}(\Sigma^{48,5}\T) \oplus \piA_{*,*}(\Sigma^{64,8}\T). 
 \end{multline*}
It follows that there is an isomorphism
$$ \piA_{*,*}(\T \otimes \bou_1) \cong \piA_{*,*}(\Sigma^{24,3}\T) \oplus \piA_{*,*}(\Sigma^{40,6}\T). $$
Moreover, one can check from the $\piA_{*,*}(\FF_2)$-module structure of $\piA_{*,*}(\bp{4})$ that the isomorphism preserves multiplication by 
$$ v_0, v_1^4, v_0v_2^2, v_2^8, h_1, h_2, g, v_2^4h_1. $$
The map
$$ \Sigma^{24,3}\FF_2 \oplus \Sigma^{40,6}\FF_2 \to \T \otimes \bou_1 $$
which maps the two generators in gives rise to a map of $\T$-modules
$$ \Sigma^{24,3}\T \oplus \Sigma^{40,6}\T \to \T \otimes \bou_1. $$
One can then use $\piA_{*,*}(\FF_2)$-module structures to determine that this map is an isomorphism on $\piA_{*,*}$.
\end{proof}

\begin{rmk}\label{rmk:bo1^ksplit}
Propositions~\ref{prop:split1} and \ref{prop:split2} allow one to inductively compute a splitting of $v_2^{-1}\bp{k}$ in $\mc{D}_{A(2)_*}$ as a sum of suspensions of $v_2^{-1}\bou_1$, $v_2^{-1}\bp{2}$ and $\T$.
For example, we have
\begin{align*}
v_2^{-1}\bp{4} & \simeq (2\Sigma^{16,1}v_2^{-1}\bou_1 \oplus \Sigma^{24,2}\T)\otimes \bou_1 \\
& 2\Sigma^{16,1}v_2^{-1}\bp{2} \oplus \Sigma^{24,2}\T\otimes \bou_1 \\
& 2\Sigma^{16,1}v_2^{-1}\bp{2} \oplus \Sigma^{48,5}\T \oplus \Sigma^{64,8}\T.
\end{align*}
In the next case, we can further simplify the answer using $v_2^8$ periodicity.
\begin{align*}
v_2^{-1}\bp{5} 
 \simeq  \: & (2\Sigma^{16,1}v_2^{-1}\bp{2} \oplus \Sigma^{48,5}\T \oplus \Sigma^{64,8}\T)\otimes \bou_1 \\
 \simeq \: &  2\Sigma^{16,1}v_2^{-1}\bp{3} \oplus \Sigma^{48,5}\T\otimes \bou_1 \oplus \Sigma^{64,8}\T\otimes \bou_1 \\
 \simeq \: &  4\Sigma^{32,2}v_2^{-1}\bou_1 \oplus 2\Sigma^{40,3}\T \oplus \Sigma^{72,8}\T \\
 & \: \oplus 2\Sigma^{88,11}\T \oplus \Sigma^{104,14}\T \\ 
 \simeq \: &  4\Sigma^{32,2}v_2^{-1}\bou_1 \oplus \Sigma^{24}\T \oplus 4\Sigma^{40,3}\T \oplus \Sigma^{56,6}\T.
\end{align*}
We similarly may compute
\begin{equation}\label{eq:bo1^6}
\begin{split}
v_2^{-1}\bp{6} \simeq \: & 4\Sigma^{32,2}v_2^{-1}\bp{2} \oplus \Sigma^{48,3}\T \oplus 5\Sigma^{64,6} \T \\
& \: \oplus 5\Sigma^{32,1}\T \oplus \Sigma^{48,4}\T.
\end{split}
\end{equation}
\end{rmk}

%We note the following consequence of the proof of Proposition~\ref{prop:split1}.
% INCORRECT - I think D\T \simeq \Sigma^{0,1}\T
%\begin{prop}\label{prop:DT}
%There is an equivalence 
%$$ v_2^{-1}D\T \simeq \Sigma^{-48,-3}\T. $$
%\end{prop}
%
%\begin{proof}
%By the proof of Proposition~\ref{prop:split1}, we have
%$$ v_2^{-1} (\bou_1^{\otimes 3})^{hC_3} \simeq \T. $$
%Since we are working in characteristic $2$, the transfer
%$$ (\bou_1^{\otimes 3})_{hC_3} \to (\bou_1^{\otimes 3})^{hC_3} $$
%is an equivalence.  We therefore have (using Proposition~\ref{prop:Dbo1})
%\begin{align*}
%v_2^{-1}D\T & \simeq v_2^{-1}D((\bou_1^{\otimes 3})_{hC_3}) \\
%& \simeq v_2^{-1}(D(\bou_1^{\otimes 3}))^{hC_3} \\
%& \simeq v_2^{-1}\Sigma^{-48,-3}(\bou_1^{\otimes 3})^{hC_3} \\
%& \simeq \Sigma^{-48,-3}\T.
%\end{align*}
%\end{proof}

Finally, we will find the following splitting to be useful.

\begin{prop}\label{prop:tmf3^2splitting}
There is a splitting
$$ \T^{\otimes 2} \simeq \T \oplus \Sigma^{0,-1}\T \oplus \Sigma^{16,2} \T \oplus \Sigma^{32,5}\T. $$ 
\end{prop}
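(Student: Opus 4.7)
The plan is to compute $v_2^{-1}\bp{6}$ in two different ways and compare. First, tensoring Proposition~\ref{prop:split1} with itself and applying Proposition~\ref{prop:split2} to resolve the arising $\T \otimes \bou_1$ factors gives
$$v_2^{-1}\bp{6} \simeq 4\Sigma^{32,2}v_2^{-1}\bp{2} \oplus 4\Sigma^{64,6}\T \oplus 4\Sigma^{80,9}\T \oplus \Sigma^{48,4}\T^{\otimes 2}.$$
Second, tensoring the decomposition of $v_2^{-1}\bp{5}$ from Remark~\ref{rmk:bo1^ksplit} with $\bou_1$, applying Proposition~\ref{prop:split2} throughout, and simplifying via the $v_2^8$-periodicity equivalence $\Sigma^{96,12}\T \simeq \Sigma^{48,4}\T$ gives
$$v_2^{-1}\bp{6} \simeq 4\Sigma^{32,2}v_2^{-1}\bp{2} \oplus \Sigma^{48,3}\T \oplus 5\Sigma^{64,6}\T \oplus 5\Sigma^{80,9}\T \oplus \Sigma^{48,4}\T.$$
Taking $\piA_{*,*}$ of both decompositions and cancelling contributions bidegreewise as graded $\FF_2$-vector spaces, I deduce the abstract bigraded isomorphism
$$\piA_{*,*}(\T^{\otimes 2}) \cong \piA_{*,*}(\T \oplus \Sigma^{0,-1}\T \oplus \Sigma^{16,2}\T \oplus \Sigma^{32,5}\T).$$

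To upgrade this to an equivalence in $\mc{D}_{A(2)_*}$, I use the weak ring structure on $\T$ from Proposition~\ref{prop:ring}. The unit provides a map $\T \xrightarrow{u \otimes 1} \T^{\otimes 2}$ retracted by $m$, splitting off the first summand. For each remaining bidegree $(n,s) \in \{(0,-1),(16,2),(32,5)\}$, I choose a class $\alpha \in \piA_{n,s}(\T^{\otimes 2})$ corresponding to the appropriate summand and extend it using the $\T$-module structure on $\T^{\otimes 2}$:
$$ \widetilde{\alpha}: \Sigma^{n,s}\T \simeq \Sigma^{n,s}\FF_2 \otimes \T \xrightarrow{\alpha \otimes 1_\T} \T^{\otimes 2} \otimes \T \xrightarrow{m \otimes 1_\T} \T^{\otimes 2}. $$
Assembling these components gives a candidate equivalence $\T \oplus \Sigma^{0,-1}\T \oplus \Sigma^{16,2}\T \oplus \Sigma^{32,5}\T \to \T^{\otimes 2}$, and I verify it induces the abstract $\piA_{*,*}$-isomorphism above by tracing generators under $\piA_{*,*}(\FF_2)$-multiplication and comparing with the structure of $\piA_{*,*}(\T)$ displayed in Figure~\ref{fig:TMF3}.

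The main obstacle is choosing the classes $\alpha$ so that their extensions realize the intended summands rather than falling into lower-filtration classes of $\piA_{*,*}(\T^{\otimes 2})$. Concretely, the $\alpha$'s are identified with the ``extra'' generators appearing in the second decomposition of $v_2^{-1}\bp{6}$ that are not already accounted for in the first decomposition; locating them explicitly inside $\piA_{*,*}(\Sigma^{48,4}\T^{\otimes 2})$ via the $\piA_{*,*}(\FF_2)$-module structure (and products such as $v_2^4 h_1$ and $g$ which multiplicatively generate $\piA_{*,*}(\T)$) ensures each $\widetilde{\alpha}$ yields a genuine $\T$-summand, completing the proof.
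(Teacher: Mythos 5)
Your proof is correct and follows essentially the same approach as the paper's: both compute $v_2^{-1}\bou_1^{\otimes 6}$ in two ways, cancel common summands to extract the abstract $\piA_{*,*}(\FF_2)$-module isomorphism for $\T^{\otimes 2}$, and then realize it by extending $\piA_{*,*}(\T)$-module generators via the $\T$-module structure on $\T^{\otimes 2}$ coming from Proposition~\ref{prop:ring}. The extra care you take in selecting the classes $\alpha$ is implicit in the paper's phrase ``extend the $\piA_{*,*}(\T)$-module generators,'' and your use of $v_2^8$-periodicity ($\Sigma^{80,9}\T \simeq \Sigma^{32,1}\T$, $\Sigma^{96,12}\T \simeq \Sigma^{48,4}\T$) matches the paper's bookkeeping.
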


\begin{proof}
Smashing the splitting of Proposition~\ref{prop:split1} with itself, and applying Proposition~\ref{prop:split2} and $v_2^8$-periodicity, we have
\begin{align*}
 v_2^{-1}\bou_1^{\otimes 6} 
 & \simeq 4\Sigma^{32,2}\bp{2} \oplus 4\Sigma^{40,3}\bou_1 \otimes \T \oplus \Sigma^{48,4}\T^{\otimes 2} \\
  & \simeq 4\Sigma^{32,2}\bp{2} \oplus 4\Sigma^{64,6}\T \oplus 4\Sigma^{80,9}\T \oplus \Sigma^{48,4}\T^{\otimes 2} \\ 
  & \simeq 4\Sigma^{32,2}\bp{2} \oplus 4\Sigma^{64,6}\T \oplus 4\Sigma^{32,1}\T \oplus \Sigma^{48,4}\T^{\otimes 2}. \\ 
 \end{align*}
On the other hand, by (\ref{eq:bo1^6}), we have
\begin{align*} v_2^{-1}\bp{6} \simeq \: & 4\Sigma^{32,2}v_2^{-1}\bp{2} \oplus \Sigma^{48,3}\T \oplus 5\Sigma^{64,6} \T \\
& \: \oplus 5\Sigma^{32,1}\T \oplus \Sigma^{48,4}\T. 
\end{align*}
Making use of $\piA_{*,*}(\FF_2)$ module structures, we deduce that there is an isomorphism
\begin{multline*}
 \piA_{*,*}(\T^{\otimes 2}) \cong \\
 \piA_{*,*}(\Sigma^{0,-1}\T \oplus \Sigma^{16,2} \T \oplus \Sigma^{-16,-3}\T \oplus \T) \\
 \cong \piA_{*,*}(\Sigma^{0,-1}\T \oplus \Sigma^{16,2} \T \oplus \Sigma^{32,5}\T \oplus \T)
 \end{multline*}
 of $\piA_{*,*}(\FF_2)$-modules.  Since $\T^{\otimes 2}$ is a $\T$-module, we can extend the $\piA_{*,*}(\T)$-module generators of $\piA_{*,*}(\T^{\otimes 2})$ to a map
$$\Sigma^{0,-1}\T \oplus \Sigma^{16,2} \T \oplus \Sigma^{32,5}\T \oplus \T \to \T^{\otimes 2} $$
which is a $\piA_{*,*}$-isomorphism, hence an equivalence.
\end{proof}

\section{Generating functions}\label{sec:genfuncs}

In this section we will describe a useful combinatorial way of computing decompositions of $v_2^{-1}\bp{k}$ and $v_2^{-1}\bou_j$.

We will represent the objects of $\mc{D}_{A(2)_*}$ of the form
\begin{equation}\label{eq:genform}
\Sigma^{8i_1,j_1}v_2^{-1}\bp{k_1}\otimes \T^{\otimes l_1} \oplus \cdots \oplus \Sigma^{8i_n, j_n}v_2^{-1}\bp{k_n}\otimes \T^{\otimes l_n}
\end{equation}
by elements of $\ZZ[s^{\pm},t^{\pm},x,y]$:
$$
 t^{i_1}s^{j_1}x^{k_1}y^{l_1} + \cdots + t^{i_n}s^{j_n}x^{k_n}y^{l_n}. 
$$

Propositions~\ref{prop:split1}, \ref{prop:split2}, and $v_2$-periodicity impose some relations on this polynomial ring --- we therefore work in the quotient ring
\begin{equation}\label{eq:R}
 R := \ZZ[s^{\pm},t^{\pm},x,y]/(x^3 = 2t^2sx+t^3s^2y, \: xy := t^3s^3y+t^5s^6y, \: t^6s^8 = 1).
\end{equation}
Note that these relations imply
$$ y^2 = y + s^{-1}y + t^2s^2y + t^4s^5y. $$
This relation reflects the splitting of Prop~\ref{prop:tmf3^2splitting}.

We may use the relations of $R$ to reduce $x^k$ to a sum of monomials whose terms are of the form $t^is^jx$, $t^is^jx^2$, and $t^is^jy$.  These reduced forms of $x^k$ correspond to splittings of $v_2^{-1}\bp{k}$.  For example, the splitting (\ref{eq:bo1^6}) corresponds to the expression
$$ x^6 = 5 s^6 t^8 y + s^4 t^6 y + s^3 t^6 y + 5 s t^4 y + 4 s^2 t^4 x^2 $$
in $R$.  Table~\ref{tab:bo1^k} shows the reduced forms of $x^k$ in $R$ for $k \le 16$.

\begin{table}
\begin{tabular}{rcl}
\hline
&&\\
 $x^3$ & $=$ & $s^2 t^3 y + 2 s t^2 x$ \\
 $x^4$ & $=$ & $s^5 t^6 y + t^2 y + 2 s t^2 x^2$ \\
 $x^5$ & $=$ & $s^6 t^7 y + 4 s^3 t^5 y + t^3 y + 4 s^2 t^4 x$ \\
 $x^6$ & $=$ & $5 s^6 t^8 y + s^4 t^6 y + s^3 t^6 y + 5 s t^4 y + 4 s^2 t^4 x^2$ \\
 $x^7$ & $=$ & $6 s^7 t^9 y + s^6 t^9 y + 14 s^4 t^7 y + s^2 t^5 y + 6 s t^5 y + 8 s^3 t^6 x$ \\
 $x^8$ & $=$ & $20 s^7 t^{10} y + 7 s^5 t^8 y + 7 s^4 t^8 y + 20 s^2 t^6 y + s t^6 y + t^4 y + 8 s^3 t^6 x^2$ \\
 $x^9$ & $=$ & $8 s^7 t^{11} y + s^6 t^9 y + 48 s^5 t^9 y + s^4 t^9 y + 8 s^3 t^7 y + 27 s^2 t^7 y + 27 t^5 y$ \\
 && $+ 16 s^4 t^8 x$ \\
 $x^{10}$ & $=$ & $s^7 t^{12} y + 35 s^6 t^{10} y + 35 s^5 t^{10} y + s^4 t^8 y + 75 s^3 t^8 y + 9 s^2 t^8 y$ \\
 && $+ 9 s t^6 y + 75 t^6 y + 16 s^4 t^8 x^2$ \\
 $x^{11}$ & $=$ & $10 s^7 t^{11} y + 166 s^6 t^{11} y + 10 s^5 t^{11} y + 44 s^4 t^9 y + 110 s^3 t^9 y + s^2 t^9 y$ \\
 && $+ s^2 t^7 y + 110 s t^7 y + 44 t^7 y + 32 s^5 t^{10} x$ \\
 $x^{12}$ & $=$ & $154 s^7 t^{12} y + 154 s^6 t^{12} y + s^5 t^{12} y + 11 s^5 t^{10} y + 276 s^4 t^{10} y$ \\
 && $+ 54 s^3 t^{10} y + 54 s^2 t^8 y + 276 s t^8 y + 11 t^8 y + t^6 y + 32 s^5 t^{10} x^2$ \\
 $x^{13}$ & $=$ & $584 s^7 t^{13} y + 65 s^6 t^{13} y + s^6 t^{11} y + 208 s^5 t^{11} y + 430 s^4 t^{11} y$ \\
 && $+ 12 s^3 t^{11} y + 12 s^3 t^9 y + 430 s^2 t^9 y + 208 s t^9 y + t^9 y + 65 t^7 y + 64 s^6 t^{12} x$ \\
 $x^{14}$ & $=$ & $638 s^7 t^{14} y + 13 s^6 t^{14} y + 77 s^6 t^{12} y + 1014 s^5 t^{12} y + 273 s^4 t^{12} y$ \\
 && $ + s^3 t^{12} y + s^4 t^{10} y + 273 s^3 t^{10} y + 1014 s^2 t^{10} y + 77 s t^{10} y + 13 s t^8 y + 638 t^8 y$ \\
 && $ + 64 s^6 t^{12} x^2$ \\
 $x^{15}$ & $=$ & $350 s^7 t^{15} y + s^6 t^{15} y + 14 s^7 t^{13} y + 911 s^6 t^{13} y + 1652 s^5 t^{13} y$ \\
 && $ + 90 s^4 t^{13} y + 90 s^4 t^{11} y + 1652 s^3 t^{11} y + 911 s^2 t^{11} y + 14 s t^{11} y + s^2 t^9 y$ \\
 && $ + 350 s t^9 y + 2092 t^9 y + 128 s^7 t^{14} x$ \\
 $x^{16}$ & $=$ & $104 s^7 t^{16} y + 440 s^7 t^{14} y + 3744 s^6 t^{14} y + 1261 s^5 t^{14} y + 15 s^4 t^{14} y$ \\ 
 && $+ 15 s^5 t^{12} y + 1261 s^4 t^{12} y + 3744 s^3 t^{12} y + 440 s^2 t^{12} y + s t^{12} y + 104 s^2 t^{10} y$ \\
 && $ + 2563 s t^{10} y + 2563 t^{10} y + t^8 y + 128 s^7 t^{14} x^2$ \\
 &&\\
\hline
\end{tabular}
\caption{Reduced expressions for $x^k$ in $R$ corresponding to decompositions of $v_2^{-1}\bp{k}$.}\label{tab:bo1^k}
\end{table}

In light of Propositions~\ref{prop:Dbo1} we can also compute the duals of objects of the form (\ref{eq:genform}) represented as an element of $R$ via the ring map:
\begin{align*}
D: R & \to R \\
t & \mapsto t^{-1} \\
s & \mapsto s^{-1} \\
x & \mapsto t^{-2}s^{-1} \cdot x \\
y & \mapsto s \cdot y
\end{align*}
Note the formula $D(y) = sy$ is forced by the relations of $R$ since
\begin{align*}
2t^{-4}s^{-2}x + t^{-3}s^{-1}y & = t^{-6}s^{-3}x^3 \\
& = D(x^3) \\
& = D(2t^2sx+t^3s^2y) \\
& = 2t^{-4}s^{-2}x+t^{-3}s^{-2}Dy.
\end{align*}
We note however that Proposition~\ref{prop:split1} and Proposition~\ref{prop:Dbo1} can be used to deduce that $v_2^{-1}D\T \simeq \Sigma^{0,1}\T$.

\emph{Now assume that the connecting morphisms $\partial_j$ (\ref{eq:partialj}) are trivial for for $1 \le j \le j_0$.}  (We will eventually prove $\partial_j$ is always zero in Theorem~\ref{thm:partialj}.)
Then we can inductively define elements of $R$ which encode the splitting of $v_2^{-1}\bou_j$ for $j \le 2j_0+1$.  These are the \emph{bo-Brown-Gitler} polynomials, introduced in \cite[Sec.~8]{BHHM2}.  Their definition comes from (\ref{eq:v2boj2}) and (\ref{eq:splittingconj}).
\begin{equation}\label{eq:fj}
\begin{split}
 f_0 & := 1, \\ 
 f_1 & := x, \\
 f_{2j+1} & := t^{j} x \cdot f_{j}, \\
 f_{2j} & := t^{j}f_j + t^{j+1}s \cdot f_{j-1}.  
\end{split}
\end{equation} 
Table~\ref{tab:boj} shows reduced expressions for $f_j$ in $R$ for $j \le 16$.

\begin{table}
\begin{tabular}{rcl}
\hline
&& \\
 $f_1$ & $=$ & $x$ \\
 $f_2$ & $=$ & $t x + s t^2$ \\
 $f_3$ & $=$ & $t x^2$ \\
 $f_4$ & $=$ & $s t^3 x + t^3 x + s t^4$ \\
 $f_5$ & $=$ & $t^3 x^2 + s t^4 x$ \\
 $f_6$ & $=$ & $t^4 x^2 + s t^5 x + s^2 t^6$ \\
 $f_7$ & $=$ & $s^2 t^7 y + 2 s t^6 x$ \\
 $f_8$ & $=$ & $s t^6 x^2 + s t^7 x + t^7 x + s t^8$ \\
 $f_9$ & $=$ & $s t^7 x^2 + t^7 x^2 + s t^8 x$ \\
 $f_{10}$ & $=$ & $t^8 x^2 + s^2 t^9 x + 2 s t^9 x + s^2 t^{10}$ \\
 $f_{11}$ & $=$ & $s^2 t^{11} y + s t^9 x^2 + 2 s t^{10} x$ \\
 $f_{12}$ & $=$ & $s t^{10} x^2 + t^{10} x^2 + s^2 t^{11} x + s t^{11} x + s^2 t^{12}$ \\
 $f_{13}$ & $=$ & $s^2 t^{13} y + s t^{11} x^2 + s^2 t^{12} x + 2 s t^{12} x$ \\
 $f_{14}$ & $=$ & $s^2 t^{14} y + s t^{12} x^2 + s^2 t^{13} x + 2 s t^{13} x + s^3 t^{14}$ \\
 $f_{15}$ & $=$ & $s^5 t^17 y + t^{13} y + 2 s t^{13} x^2$ \\
 $f_{16}$ & $=$ & $s^3 t^{16} y + s t^{14} x^2 + 2 s^2 t^{15} x + s t^{15} x + t^{15} x + s t^{16}$ \\
 && \\
\hline
\end{tabular}
\caption{Reduced expressions for $f_j$ in $R$.}\label{tab:boj}
\end{table}

\section{$g$-local computations}\label{sec:glocal}

We will now consider the $g$-local bo-Brown-Gitler comodules, for 
$$ g = h^4_{2,1} \in \piA_{20,4}(\FF_2). $$  
The $g$-local results of this section will be crucial for the main result of Section~\ref{sec:partialj}.

Because the terms $A(2)\mmod A(1)_* \otimes \tmfu_{j-1}$ in (\ref{eq:boSES1}) and (\ref{eq:boSES2}) are $g$-locally acyclic in $\mc{D}_{A(2)_*}$, we have cofiber sequences
\begin{equation}\label{eq:glocal1}
 \Sigma^{8j}g^{-1}\bou_j \to g^{-1}\bou_{2j} \to \Sigma^{8j+8,1} g^{-1}\bou_{j-1} \xrightarrow{\partial'_j} \Sigma^{8j+1,-1} g^{-1}\bou_{j}
 \end{equation}
and equivalences
\begin{equation}\label{glocal2} 
g^{-1}\bou_{2j+1} \simeq \Sigma^{8j}g^{-1}\bou_j \otimes \bou_1.
\end{equation}

We therefore get a $g$-local story completely analogous to the $v_2$-local story, except much easier, because there are no `$\T$'-terms.

\begin{prop}\label{prop:split3}
There is a splitting
$$ g^{-1}\bou_1^{\otimes 3} \simeq 2\Sigma^{16,1} g^{-1}\bou_1. $$
\end{prop}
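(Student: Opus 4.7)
The plan is to mimic the proof of Proposition~\ref{prop:split1} in the $g$-local category. Since we work in characteristic $2$, the $C_3$-action permuting the three tensor factors gives a decomposition
$$ \bp{3} \simeq (\bp{3})^{hC_3} \oplus B $$
in $\mc{D}_{A(2)_*}$, with $\piA_{*,*}((\bp{3})^{hC_3}) = \piA_{*,*}(\bp{3})^{C_3}$. The proposition will follow once I establish two things: (i) the fixed-point summand becomes contractible after $g$-localization, i.e.\ $g^{-1}(\bp{3})^{hC_3}\simeq 0$, and (ii) using the generator $x_{16}:\Sigma^{16,1}\FF_2 \to \bou_1^{\otimes 2}$ from the proof of Proposition~\ref{prop:split1}, the composite
$$ \Sigma^{16,1}g^{-1}\bou_1 \oplus \Sigma^{16,1}g^{-1}\bou_1 \xrightarrow{x_{16}\otimes 1 \oplus 1\otimes x_{16}} g^{-1}\bp{3} \to g^{-1}B $$
is an equivalence.

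For (i), the cleanest route is to invoke Proposition~\ref{prop:split1}, which already identifies $v_2^{-1}(\bp{3})^{hC_3} \simeq \Sigma^{24,2}\T$, and then to verify $g^{-1}\T \simeq 0$ by inspecting $\piA_{*,*}(\T)$ in Figure~\ref{fig:TMF3}; every generator there is seen to be $g$-torsion. Alternatively one can check the vanishing of the $C_3$-fixed part of $g^{-1}\piA_{*,*}(\bp{3})$ directly from Figure~\ref{fig:bo1^3}. Either way this is a finite chart-level computation, and it is the only place where the $g$-local story genuinely diverges from the $v_2$-local one.

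For (ii), with the fixed-point summand disposed of, the projection $g^{-1}\bp{3} \to g^{-1}B$ is an equivalence and we need only check that the composite above is a $\piA_{*,*}$-isomorphism. This is read off from Figure~\ref{fig:bo1^3}: after $g$-inversion, all generators of $\piA_{*,*}(\bp{3})$ not detected by $C_3$-fixed classes lie in the image of $x_{16}\otimes 1$ or $1\otimes x_{16}$ and survive $g$-localization as exactly two shifted copies of $g^{-1}\piA_{*,*}(\bou_1)$. Being an isomorphism on $\piA_{*,*}$, the map is then an equivalence in $\mc{D}_{A(2)_*}$, yielding $g^{-1}\bp{3}\simeq 2\Sigma^{16,1}g^{-1}\bou_1$. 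The main (and essentially only) obstacle is the vanishing in (i); everything else is parallel to, and in fact simpler than, the $v_2$-local argument.
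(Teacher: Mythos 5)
Your proposal follows essentially the same approach as the paper: the paper's proof is a two-line reduction to Proposition~\ref{prop:split1}, noting only that the argument simplifies because $g^{-1}(\bp{3})^{hC_3}\simeq 0$, which it establishes by the same chart inspection you describe as your second route (checking that $g^{-1}\piA_{*,*}(\bp{3})^{C_3}=0$ from Figure~\ref{fig:bo1^3}). One small caution on the route you list as ``cleanest'': from $v_2^{-1}(\bp{3})^{hC_3}\simeq\Sigma^{24,2}\T$ and $g^{-1}\T\simeq 0$ you directly get only $v_2^{-1}g^{-1}(\bp{3})^{hC_3}\simeq 0$, and deducing $g^{-1}(\bp{3})^{hC_3}\simeq 0$ from this requires knowing that $v_2$-localization is injective on $g^{-1}\piA_{*,*}((\bp{3})^{hC_3})$ --- true here because $\piA_{*,*}(\bp{3})$ is $v_2^8$-torsion-free and $g$-localization is a filtered colimit so preserves injectivity of $v_2$, but this step should be made explicit. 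The direct inspection in your second route (which is the paper's route) sidesteps the issue entirely.
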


\begin{proof}
This follows the proof of Proposition~\ref{prop:split1}, except the situation is simpler because 
$$ g^{-1}(\bou_1^{\otimes 3})^{hC_3} \simeq 0 $$
since $g^{-1}\piA_{*,*}(\bou_1^{\otimes 3})^{C_3}$ is zero by inspection. 
\end{proof}

We also have the following $g$-local analog of Proposition~\ref{prop:Dbo1}, whose proof is identical.

\begin{prop}\label{prop:glocalDbo1}
We have
$$ g^{-1}D\bou_1 \simeq \Sigma^{-16,-1}g^{-1}\bou_1. $$
\end{prop}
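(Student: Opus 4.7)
The plan is to run the proof of Proposition~\ref{prop:Dbo1} verbatim, substituting $g^{-1}$ for $v_2^{-1}$ throughout. The two ingredients needed are (i) the short exact sequence of $A(2)_*$-comodules
$$ 0 \to \bou_1 \to A(2)\mmod A(1)_* \to \Sigma^{17} D\bou_1 \to 0 $$
(in which $\Sigma^{17}$ denotes a pure internal-degree shift), and (ii) the $g$-local acyclicity of the middle term. Ingredient (i) does not depend on the chosen localization, and ingredient (ii) is already recorded in Section~\ref{sec:BrownGitler}: specializing the observation
$$ g^{-1}A(2)\mmod A(1)_* \otimes \ul{\tmf}_{j-1} \simeq 0 $$
to $j=1$ (so that $\ul{\tmf}_0 \simeq \FF_2$) immediately yields $g^{-1}(A(2)\mmod A(1)_*) \simeq 0$ in $\mc{D}_{A(2)_*}$.

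With these facts in hand, the short exact sequence above determines a cofiber triangle in $\mc{D}_{A(2)_*}$, and $g$-localizing collapses the middle term, leaving a cofiber sequence whose outer terms must be related by the triangulated suspension. Unpacking the bigraded suspension conventions (the triangulated suspension corresponds to $\Sigma^{1,-1}$ in the $(n,s)$-bigrading used for $\pi^{A(2)_*}_{n,s}$), one reads off
$$ \Sigma^{17,0} g^{-1}D\bou_1 \simeq \Sigma^{1,-1} g^{-1}\bou_1, $$
which rearranges to the claimed equivalence $g^{-1}D\bou_1 \simeq \Sigma^{-16,-1} g^{-1}\bou_1$.

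There is no substantive obstacle here; everything is formal once the $g$-acyclicity of $A(2)\mmod A(1)_*$ is invoked, which is exactly why the excerpt remarks that the proof is identical to that of Proposition~\ref{prop:Dbo1}. The only thing worth double-checking is the bigraded bookkeeping for the connecting map, but this is the same computation that produced the $(-16,-1)$ shift in the $v_2$-local statement.
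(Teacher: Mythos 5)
Your proposal is correct and matches the paper's approach: the paper explicitly states that the proof of Proposition~\ref{prop:glocalDbo1} is identical to that of Proposition~\ref{prop:Dbo1}, which is exactly the substitution you perform. Your degree bookkeeping (the $\Sigma^{17,0}$ shift combining with the triangulated $\Sigma^{1,-1}$ to give $\Sigma^{-16,-1}$) and the $g$-acyclicity of $A(2)\mmod A(1)_*$ are both handled correctly.
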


Thus we may analyze the decompositions of $g^{-1}\bou_j$ by means of generating functions analogous to Section~\ref{sec:genfuncs}.  In light of Proposition~\ref{prop:split3}, instead of working in the ring $R$, we work in the ring 
$$ R' := \ZZ[s^{\pm},t^{\pm},x]/(x^3 = 2t^2sx). $$
By Proposition~\ref{prop:glocalDbo1}, we may encode $g$-local Spanier-Whitehead duality by the function
\begin{align*}
D: R' & \to R' \\
s & \mapsto s^{-1} \\
t & \mapsto t^{-1} \\
x & \mapsto t^{-2}s^{-1}x 
\end{align*}
Define elements $f'_j \in R'$ by the same inductive definition (\ref{eq:fj}) used to define the elements $f_j \in R$.
A simple induction reveals the following.

\begin{lem}\label{lem:f'j}
The elements $f'_j \in R'$ take the form
$$ f'_j = \begin{cases}
\sum_i (a_{i,j} s^i t^j + b_{i,j}s^i t^{j-1}x + c_{i,j}s^i t^{j-2} x^2), & j \: \rm{even}, \\
\sum_i (b_{i,j}s^i t^{j-1}x + c_{i,j}s^i t^{j-2} x^2), & j \: \rm{odd}, \\
\end{cases}
$$
for $a_{i,j}, b_{i,j}, c_{i,j} \in \mb{N}$. 
\end{lem}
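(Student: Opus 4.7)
The plan is to prove the lemma by strong induction on $j$, using the defining recursion (\ref{eq:fj}) and the single relation $x^3 = 2t^2 s x$ of $R'$. The base cases $j=0$ and $j=1$ are immediate: $f'_0 = 1 = s^0 t^0$ (so $a_{0,0} = 1$) fits the even form, and $f'_1 = x = s^0 t^0 x$ (so $b_{0,1} = 1$) fits the odd form. For the inductive step I would treat the odd and even cases separately, and within each case split further according to the parity of the smaller index appearing in the recursion.

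For the odd case $j = 2k+1$, I use $f'_{2k+1} = t^k x \cdot f'_k$. If $k$ is even, the inductive hypothesis writes $f'_k$ as a sum of monomials of the three types $s^i t^k$, $s^i t^{k-1} x$, $s^i t^{k-2} x^2$, so multiplication by $t^k x$ produces terms of types $s^i t^{2k} x$, $s^i t^{2k-1} x^2$, and $s^i t^{2k-2} x^3$. The offending $x^3$ terms are converted via $x^3 = 2t^2 s x$ into terms of type $2 s^{i+1} t^{2k} x$, which has the allowed shape $s^{i'} t^{j-1} x$ with a non-negative coefficient. If instead $k$ is odd, then $f'_k$ has only the two types $s^i t^{k-1} x$ and $s^i t^{k-2} x^2$; the same argument applies, with the $x^3$ contribution reducing to another non-negative multiple of $s^{i'} t^{2k} x$.

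For the even case $j = 2k$, I use $f'_{2k} = t^k f'_k + t^{k+1} s \cdot f'_{k-1}$. No $x^3$ can appear here because neither $f'_k$ nor $f'_{k-1}$ is multiplied by $x$. Since $k$ and $k-1$ have opposite parities, exactly one of $f'_k$, $f'_{k-1}$ contains a ``pure $t$'' contribution (by induction), and multiplication by the monomials $t^k$ or $t^{k+1} s$ sends each of the three allowed monomial types at level $k$ (resp.\ $k-1$) to one of the three allowed types at level $2k$, always with a factor of either $1$ or $s$ on the $s$-variable. Collecting terms yields a sum of the desired form with $a_{i,2k}, b_{i,2k}, c_{i,2k} \in \mb{N}$.

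There is no real obstacle here beyond bookkeeping; the only substantive point is that the sole defining relation of $R'$ has the form $x^3 = 2 t^2 s x$ with a \emph{non-negative} coefficient on the right-hand side, so every reduction step preserves non-negativity of the coefficients $a_{i,j}, b_{i,j}, c_{i,j}$. The inductive structure of the definition (\ref{eq:fj}) together with this positivity makes the lemma fall out directly.
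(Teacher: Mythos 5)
Your proof is correct and fills in exactly the ``simple induction'' that the paper asserts without detail: strong induction on $j$ split by the parity of $j$ and of the smaller index in the recursion (\ref{eq:fj}), with the sole relation $x^3 = 2t^2sx$ used to eliminate $x^3$ terms while preserving non-negativity of coefficients. This is the same approach the paper intends.
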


\section{The attaching maps $\partial_j$ and $\partial_j'$}\label{sec:partialj}

\begin{thm}\label{thm:partialj}
The attaching maps $\partial_j$ (\ref{eq:partialj}) and $\partial'_j$ (\ref{eq:glocal1}) are zero for all $j$.
\end{thm}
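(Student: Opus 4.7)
The plan is to prove both vanishing claims simultaneously by strong induction on $j$, handling the $g$-local case first and then leveraging it to pin down the $v_2$-local case. Fix $j$ and assume $\partial_k = 0$ and $\partial'_k = 0$ for all $k < j$. Then the recursions (\ref{eq:v2boj1})--(\ref{eq:v2boj2}) and (\ref{eq:glocal1})--(\ref{glocal2}), combined with Propositions~\ref{prop:split1} and \ref{prop:split2}, yield explicit splittings of $v_2^{-1}\bou_i$ (for $i \le 2j-1$) into bigraded suspensions of $\{v_2^{-1}\FF_2,\, v_2^{-1}\bou_1,\, v_2^{-1}\bp{2},\, \T\}$ encoded by $f_i \in R$, and analogous splittings of $g^{-1}\bou_i$ into suspensions of $\{g^{-1}\FF_2,\, g^{-1}\bou_1,\, g^{-1}\bp{2}\}$ encoded by $f'_i \in R'$.

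For the $g$-local case, Lemma~\ref{lem:f'j} ensures every summand of $g^{-1}\bou_{j-1}$ and $g^{-1}\bou_j$ has the form $\Sigma^{*,*} g^{-1}\bou_1^{\otimes k}$ with $k \in \{0,1,2\}$, so $\partial'_j$ is a matrix whose entries lie in a finite list of groups $[g^{-1}\bou_1^{\otimes a},\, g^{-1}\bou_1^{\otimes b}]_{A(2)_*}$ in bidegrees dictated by the exponents appearing in $f'_{j-1}$ and $f'_j$. Rewriting each such group via Spanier--Whitehead duality (Proposition~\ref{prop:glocalDbo1}) as $\piA_{*,*}$ of a suitable tensor power of $g^{-1}\bou_1$, I would check from the $g$-localized versions of Figures~\ref{fig:bo1} and \ref{fig:bo1^2} that every matrix entry lies in a bidegree where the target group vanishes, forcing $\partial'_j = 0$.

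For the $v_2$-local case the same bookkeeping presents $\partial_j$ as a matrix of classes in $[v_2^{-1}A,\, v_2^{-1}B]_{A(2)_*}$-groups with $A, B \in \{\FF_2, \bou_1, \bp{2}, \T\}$. The new feature is the summand $\T$, and some of these Hom-groups are genuinely non-zero, so a direct vanishing argument is unavailable. Instead, since $g$ survives $v_2$-localization, $g$-local modules form a further Bousfield localization of $v_2$-local modules, and there is a canonical localization map
\[ [\Sigma^{8j+8,1} v_2^{-1}\bou_{j-1},\, \Sigma^{8j+1,-1} v_2^{-1}\bou_j]_{A(2)_*} \longrightarrow [\Sigma^{8j+8,1} g^{-1}\bou_{j-1},\, \Sigma^{8j+1,-1} g^{-1}\bou_j]_{A(2)_*} \]
carrying $\partial_j$ to $\partial'_j$. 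The plan is to show that this map is injective on the subgroup in which $\partial_j$ lives; combined with the $g$-local step this forces $\partial_j = 0$.

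The main obstacle is this injectivity claim. Using Proposition~\ref{prop:Dbo1} together with the identification $v_2^{-1} D\T \simeq \Sigma^{0,1} \T$ observed at the end of Section~\ref{sec:genfuncs}, and the splittings indexed by $f_{j-1}$ and $f_j$, it reduces to verifying that for each pair $(A, B) \in \{\FF_2, \bou_1, \bp{2}, \T\}^2$ the group $\piA_{*,*}(v_2^{-1}A \otimes v_2^{-1}DB)$ is $g$-torsion-free in the finite list of bidegrees that arise. This becomes a bounded case check against Figures~\ref{fig:ExtA2}--\ref{fig:bo1^2} and Figure~\ref{fig:TMF3}, where $g$-multiplication acts visibly as $h_{2,1}^4$; the delicate cases are those involving $\T$, where the weak ring structure (Proposition~\ref{prop:ring}) and the rigidity lemma~\ref{lem:rigidity} are used to identify the relevant classes in $\piA_{*,*}(\T \otimes D\T)$.
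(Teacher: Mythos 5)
There is a genuine gap, and the overall architecture of your argument does not match the paper's. You propose a two-step logic: first kill $\partial'_j$ by a direct $g$-local vanishing, then transport this to $\partial_j$ along a ``localization map'' from $v_2$-local to $g$-local Hom groups. Both steps have problems.

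First, the direct $g$-local vanishing is false. After using duality and the inductive decompositions encoded by $D(f'_{j-1})\cdot f'_j$, the components of $\partial'_j$ land in groups of the form $\piA_{7,2-i}(g^{-1}\bou_1)$ and $\piA_{15,2-i}(g^{-1}\bp{2})$ for various integers $i$ (including $i \le 0$, since $D$ negates $s$-exponents). These groups vanish only for $s \ge 1$; for $s \le 0$ the $g$-localized Ext groups of $\bou_1$ and $\bp{2}$ are nonzero, being free over $\FF_2[h_{2,1}^{\pm},v_2^8]$ in appropriate internal degrees. So there is no range-free vanishing argument for $\partial'_j$ alone. Second, the map you invoke does not exist: $g^{-1}M$ is not a localization of $v_2^{-1}M$. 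Both $v_2^{-1}M$ and $g^{-1}M$ map to the joint localization $v_2^{-1}g^{-1}M$, but there is no natural map $v_2^{-1}M \to g^{-1}M$, and inverting $g$ on $v_2^{-1}M$ produces $v_2^{-1}g^{-1}M$, not $g^{-1}M$. So ``inject $\partial_j$ into $\partial'_j$'' is not a well-posed strategy.

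The paper instead runs a symmetric pinching argument in the common localization. It shows that $\partial_j$ and $\partial'_j$ have the same image in the $v_2^{-1}g^{-1}$-local Hom group, that the $\T$-summands of the $v_2$-local target contribute nothing because $\piA_{8m+7,n}(\T)=0$ for all $m,n$, and that for the remaining $\bou_1$- and $\bp{2}$-components both maps $\piA_{7,s}(g^{-1}\bou_1)\hookrightarrow \piA_{7,s}(v_2^{-1}g^{-1}\bou_1)\hookleftarrow \piA_{7,s}(v_2^{-1}\bou_1)$ (and likewise in degree $15$ for $\bp{2}$) are injective with $\piA_{7,s}(g^{-1}\bou_1)=0$ for $s\ge 1$ and $\piA_{7,s}(v_2^{-1}\bou_1)=0$ for $s\le 1$. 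These complementary vanishing ranges cover all $s$, so for each component at least one side vanishes, and injectivity then forces the other side to vanish too. Your proposal would need to be restructured along these lines: you cannot settle $\partial'_j$ in isolation, you must kill $\partial_j$ and $\partial'_j$ together by comparing them in $v_2^{-1}g^{-1}$-local Hom groups, and you should record the separate and much easier fact that $\T$ never contributes in the relevant stem.
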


\begin{proof}
Write the exact sequence (\ref{eq:boSES1}) as a splice of two short exact sequences
 $$ 
 \xymatrix@R-2em@C-1em{
 && 0 \ar[dr] && \quad 0 \quad \quad \quad \\
 &&& K \ar[dr] \ar[ur] \\
 0 \ar[r] & \Sigma^{8j} \ul{\bo}_j \ar[r] & \ul{\bo}_{2j} \ar[ur] \ar[rr] && A(2)\mmod A(1)_* \otimes \ul{\tmf}_{j-1}  \ar[r] & \Sigma^{8j+9} \ul{\bo}_{j-1} \ar[r] & 0} $$
and let
\begin{gather*}
\Sigma^{8j}\bou_j \to \bou_{2j} \to K \xrightarrow{\alpha} \Sigma^{8j+1,-1} \bou_{j} \\
\Sigma^{8j+8,1} \bou_{j-1} \xrightarrow{\beta} K \to A(2)\mmod A(1)_* \otimes \ul{\tmf}_{j-1}  \to \Sigma^{8j+9} \bou_{j-1}
\end{gather*}
be the cofiber sequences in $\mc{D}_{A(2)_*}$ induced from these short exact sequences.
Then we have the following commutative diagram in $\mc{D}_{A(2)_*}$.
\begin{equation*}
\xymatrix@C+1em{
\\
\Sigma^{8j+8,1}v_2^{-1}\bou_{j-1} \ar@/^{2pc}/[rr]^{\partial_j} \ar[r]^-{\simeq}_-{v_2^{-1}\beta} \ar[d] & v_2^{-1}K  \ar[r]_-{v_2^{-1}\alpha} \ar[d] &  \Sigma^{8j+1,-1} v_2^{-1}\bou_j  \ar[d] \\
\Sigma^{8j+8,1}v_2^{-1}g^{-1}\bou_{j-1} \ar[r]^-{\simeq}_-{v_2^{-1}g^{-1}\beta} & v_2^{-1}g^{-1}K  \ar[r]_-{v_2^{-1}g^{-1}\alpha} &  \Sigma^{8j+1,-1} v_2^{-1}g^{-1}\bou_j \\
\Sigma^{8j+8,1}g^{-1}\bou_{j-1} \ar@/_{2pc}/[rr]_{\partial_j'} \ar[u] \ar[r]^-{\simeq}_-{g^{-1}\beta} & g^{-1}K  \ar[u] \ar[r]_-{g^{-1}\alpha} &  \ar[u] \Sigma^{8j+1,-1} g^{-1}\bou_j }
\end{equation*}
We therefore have
\begin{equation}\label{eq:partialcompatible}
g^{-1}\partial_j = v_2^{-1} \partial_{j}'.
\end{equation}
Now, Assume inductively that $\partial_k$ and $\partial'_k$ are zero for $k < j$.  Then for $k < 2j+1$, $v_2^{-1}\bou_k$ and $g^{-1}\bou_k$ decomposes in $\mc{D}_{A(2)_*}$ as a sum of terms corresponding to the terms of $f_k$ and $f'_k$, respectively.  Note that we have
\begin{align*}
\partial_j \in \piA_{7,2}(v_2^{-1}D(\bou_{j-1})\otimes \bou_j), \\ 
\partial'_j \in \piA_{7,2}(g^{-1}D(\bou_{j-1})\otimes \bou_j). 
\end{align*}
It follows from Lemma~\ref{lem:f'j} that 
$$ D(f'_{j-1})\cdot f'_j = \sum_i(\alpha_i s^i x + \beta_i s^i t^{-1} x^2) $$
for $\alpha_i, \beta_i \in \mb{N}$, and therefore
\begin{equation}\label{eq:glocalDboj-1boj}
 g^{-1}D(\bou_{j-1})\otimes \bou_j \simeq \bigoplus_i(\alpha_i \Sigma^{0,i} g^{-1}\bou_1 + \beta_i \Sigma^{-8,i} g^{-1}\bp{2}).
 \end{equation}
Note that there is a map of rings 
$$ \phi: R' \to R $$ 
sending $s$ to $s$, $t$ to $t$, and $x$ to $x$.  
We have
$$ f_k \equiv \phi(f'_k) \mod y. $$
We therefore have
$$ D(f_{j-1})\cdot f_j = \sum_i(\alpha_i s^i x + \beta_i s^i t^{-1} x^2) + \sum_{k,l}\gamma_{k,l}s^kt^l y. $$
It follows that we have
\begin{equation}\label{eq:v2localDboj-1boj}
 v_2^{-1}D(\bou_{j-1})\otimes \bou_j \simeq \bigoplus_i(\alpha_i \Sigma^{0,i} v_2^{-1}\bou_1 + \beta_i \Sigma^{-8,i} v_2^{-1}\bp{2}) \oplus \bigoplus_{k,l}\gamma_{k,l}\Sigma^{8l,k}\T.
 \end{equation}

Note that 
$$ \piA_{8m+7,n}(\T) = 0 $$
for all $n,m$, so the the only potential non-zero components of $\partial_j$ under the decomposition (\ref{eq:v2localDboj-1boj}) are the components
\begin{align*}
(\partial_j)_i^{(1)} & \in \pi_{7,2-i}(\alpha_iv_2^{-1}\bou_1), \\
(\partial_j)_i^{(2)} & \in \pi_{15,2-i}(\beta_iv_2^{-1}\bp{2}).
\end{align*}
Similarly, let 
\begin{align*}
(\partial'_j)_i^{(1)} & \in \pi_{7,2-i}(\alpha_ig^{-1}\bou_1), \\
(\partial'_j)_i^{(2)} & \in \pi_{15,2-i}(\beta_ig^{-1}\bp{2})
\end{align*}
denote the components of $\partial'_j$ under the splitting (\ref{eq:glocalDboj-1boj}).  

Note that the splittings (\ref{eq:glocalDboj-1boj}) and (\ref{eq:v2localDboj-1boj}) are compatible under the maps
$$ g^{-1}D(\bou_{j-1})\otimes \bou_j \to v_2^{-1}g^{-1} D(\bou_{j-1})\otimes \bou_j \leftarrow v_2^{-1} D(\bou_{j-1})\otimes \bou_j $$
since $g^{-1}\T \simeq 0$, and by (\ref{eq:partialcompatible}) $\partial_j'$ and $\partial_j$ map to the same element of 
$$ \piA_{7,2}(v_2^{-1}g^{-1} D(\bou_{j-1})\otimes \bou_j). $$
We therefore deduce that under the maps
\begin{gather*}
\alpha_i g^{-1}\bou_1 \to \alpha_i v_2^{-1}g^{-1}\bou_1 \leftarrow \alpha_i v_2^{-1}\bou_1, \\
\beta_i g^{-1}\bp{2} \to \beta_i v_2^{-1}g^{-1}\bp{2} \leftarrow \beta_i v_2^{-1}\bp{2}
\end{gather*}
we have
\begin{align*}
 v_2^{-1} (\partial'_j)_i^{(1)} & = g^{-1}  (\partial_j)_i^{(1)}, \\
 v_2^{-1} (\partial'_j)_i^{(2)} & = g^{-1}  (\partial_j)_i^{(2)}. \\
\end{align*}
However, direct inspection of $\piA_{*,*}(\bou_1)$ and $\piA_{*,*}(\bp{2})$ reveals:
\begin{itemize}
\item The maps
\begin{gather*}
\piA_{7,s}(g^{-1}\bou_1) \hookrightarrow \piA_{7,s}(v_2^{-1}g^{-1}\bou_1) \hookleftarrow \piA_{7,s}(v_2^{-1}\bou_1), \\
\piA_{15,s}(g^{-1}\bp{2}) \hookrightarrow \piA_{15,s}(v_2^{-1}g^{-1}\bp{2}) \hookleftarrow \piA_{15,s}(v_2^{-1}\bp{2})
\end{gather*}
are injections for all $s$.
\vspace{10pt}

\item We have 
\begin{align*}
\piA_{7,s}(g^{-1}\bou_1) & = 0, \\
\piA_{15,s}(g^{-1}\bp{2}) & = 0
\end{align*}
for $s \ge 1$.
\vspace{10pt}

\item We have 
\begin{align*}
\piA_{7,s}(v_2^{-1}\bou_1) & = 0, \\
\piA_{15,s}(v_2^{-1}\bp{2}) & = 0
\end{align*}
for $s \le 1$.
\end{itemize}
It follows that we must have
\begin{align*}
(\partial_j)^{(1)}_i & = 0, \\
(\partial'_j)^{(1)}_i & = 0, \\
(\partial_j)^{(2)}_i & = 0, \\
(\partial'_j)^{(2)}_i & = 0.
\end{align*}
\end{proof}

\begin{cor}\label{cor:glocal}
We have 
$$ g^{-1}\bou_{2j} \simeq \Sigma^{8j} g^{-1}\bou_j \oplus \Sigma^{8j+8,1}g^{-1}\bou_{j-1}. $$
Therefore, if we write $f'_j$ in the form
$$ f'_j = \sum_i (a_{i,j} s^i t^j + b_{i,j}s^i t^{j-1}x + c_{i,j}s^i t^{j-2} x^2)
$$
then we have
\begin{multline*} 
g^{-1}\bou_j \simeq 
\bigoplus_i (a_{i,j} \Sigma^{8j,i}g^{-1}\FF_2 \oplus b_{i,j}\Sigma^{8(j-1),i} g^{-1}\bou_1 \oplus c_{i,j}\Sigma^{8(j-2),i}g^{-1}\bp{2}).
\end{multline*}
\end{cor}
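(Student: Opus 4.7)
The first splitting is an immediate consequence of Theorem~\ref{thm:partialj}. Specifically, the cofiber sequence (\ref{eq:glocal1}) reads
\begin{equation*}
\Sigma^{8j}g^{-1}\bou_j \to g^{-1}\bou_{2j} \to \Sigma^{8j+8,1} g^{-1}\bou_{j-1} \xrightarrow{\partial'_j} \Sigma^{8j+1,-1} g^{-1}\bou_{j}
\end{equation*}
and the vanishing $\partial'_j = 0$ from Theorem~\ref{thm:partialj} means that the connecting morphism is null, so the cofiber sequence splits as claimed.

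The plan for the second statement is a straightforward induction on $j$, tracking the correspondence between the inductive definition (\ref{eq:fj}) of $f'_j$ in $R'$ and the splittings of $g^{-1}\bou_j$ in $\mc{D}_{A(2)_*}$. The base cases $g^{-1}\bou_0 \simeq g^{-1}\FF_2$ and $g^{-1}\bou_1 \simeq g^{-1}\bou_1$ match $f'_0 = 1$ and $f'_1 = x$. For the inductive step, the splitting just established corresponds to the recursion $f'_{2j} = t^j f'_j + t^{j+1}s \cdot f'_{j-1}$, while the equivalence (\ref{glocal2}) corresponds to the recursion $f'_{2j+1} = t^j x \cdot f'_j$ (here tensoring a decomposition by $\bou_1$ literally corresponds to multiplying by $x$ in $R'$). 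Thus the association $f'_j \mapsto [g^{-1}\bou_j]$ between elements of $R'$ and isomorphism classes in $\mc{D}_{A(2)_*}$ is compatible with the recursions, and we obtain a splitting of $g^{-1}\bou_j$ whose summands are indexed by the monomial terms that appear after writing $f'_j$ in reduced form in $R'$.

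To reach the final assertion we need to reduce arbitrary monomials in $R'$ to the normal form of Lemma~\ref{lem:f'j}. The only nontrivial reduction is repeated use of the relation $x^3 = 2t^2sx$, which is precisely the statement of Proposition~\ref{prop:split3} translated into the generating-function formalism; each such reduction corresponds to an actual splitting of $g^{-1}\bou_1^{\otimes 3}$ as a summand inside $g^{-1}\bou_1^{\otimes k}$. Applying these reductions exhausts all powers $x^k$ with $k \ge 3$, so each $f'_j$ reduces to a $\ZZ$-linear combination of monomials of the form $s^i t^j$, $s^i t^{j-1}x$, and $s^i t^{j-2}x^2$, with coefficients $a_{i,j}, b_{i,j}, c_{i,j} \in \mb{N}$ as in Lemma~\ref{lem:f'j}. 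Reading off the corresponding summands yields exactly the asserted decomposition of $g^{-1}\bou_j$ as a direct sum of suspensions of $g^{-1}\FF_2$, $g^{-1}\bou_1$, and $g^{-1}\bp{2}$.

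There is no real obstacle here: all the hard work is packaged into Theorem~\ref{thm:partialj} and Proposition~\ref{prop:split3}. The only thing one needs to verify carefully is that the dictionary between algebraic operations in $R'$ (multiplication by $t$, $s$, $x$, and application of the relation $x^3 = 2t^2sx$) and operations on $\mc{D}_{A(2)_*}$-objects (bigraded suspension, tensor with $\bou_1$, and replacement via the splitting of Proposition~\ref{prop:split3}) actually commutes with the recursive formation of the $f'_j$, which is a routine bookkeeping check given the structure already established in Section~\ref{sec:genfuncs}.
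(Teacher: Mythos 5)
Your proof is correct and follows essentially the same reasoning the paper has in mind (the paper gives no explicit proof for this corollary, leaving it implicit that it follows from Theorem~\ref{thm:partialj} together with the generating-function formalism of Section~\ref{sec:genfuncs} and Proposition~\ref{prop:split3}). The first splitting is indeed immediate from $\partial'_j = 0$ and the cofiber sequence (\ref{eq:glocal1}), and your dictionary between the recursions defining $f'_j$ in $R'$ and the operations in $\mc{D}_{A(2)_*}$ (shift by $\Sigma^{8,0}$ for $t$, $\Sigma^{0,1}$ for $s$, tensoring by $\bou_1$ for $x$, and the relation $x^3 = 2t^2sx$ for Proposition~\ref{prop:split3}) is exactly what reduces the computation to Lemma~\ref{lem:f'j}.
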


\begin{cor}\label{cor:v2bo2j}
We have 
$$ v_2^{-1}\bou_{2j} \simeq \Sigma^{8j} v_2^{-1}\bou_j \oplus \Sigma^{8j+8,1}v_2^{-1}\bou_{j-1}. $$
Therefore, if we write $f_j$ in the form
$$ f_j = \sum_i (a_{i,j} s^i t^j + b_{i,j}s^i t^{j-1}x + c_{i,j}s^i t^{j-2} x^2)
+\sum_{k,l}d_{j,k,l}s^k t^l y $$
then we have
\begin{multline*} 
v_2^{-1}\bou_j \simeq 
\bigoplus_i (a_{i,j} \Sigma^{8j,i}v_2^{-1}\FF_2 \oplus b_{i,j}\Sigma^{8(j-1),i} v_2^{-1}\bou_1 \oplus c_{i,j}\Sigma^{8(j-2),i}v_2^{-1}\bp{2}) \\
\oplus \bigoplus_{k,l} d_{k,l}\Sigma^{8l,k}\T. \end{multline*}
\end{cor}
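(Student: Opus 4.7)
The first equivalence is immediate from Theorem~\ref{thm:partialj}. Since the connecting homomorphism $\partial_j$ of the cofiber sequence (\ref{eq:v2boj1}) vanishes, the sequence splits, yielding
$$ v_2^{-1}\bou_{2j} \simeq \Sigma^{8j}v_2^{-1}\bou_j \oplus \Sigma^{8j+8,1}v_2^{-1}\bou_{j-1}. $$

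For the explicit decomposition controlled by $f_j$, the plan is to run a strong induction on $j$. The base cases $j=0,1$ match $f_0=1$ and $f_1=x$ directly. For the inductive step I separate by parity using the recursion (\ref{eq:fj}). When $j=2k+1$ is odd, I invoke the equivalence (\ref{eq:v2boj2}) to get $v_2^{-1}\bou_{2k+1}\simeq \Sigma^{8k}v_2^{-1}\bou_k\otimes \bou_1$, apply the inductive decomposition of $v_2^{-1}\bou_k$, and then tensor each summand with $\bou_1$, reducing using Propositions~\ref{prop:split1} and~\ref{prop:split2}. On the generating-function side this is exactly multiplication by $t^k x$ followed by reduction in $R$, which produces $f_{2k+1}=t^k x\cdot f_k$. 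When $j=2k$ is even, I apply the first part of the corollary together with the inductive decompositions of $v_2^{-1}\bou_k$ and $v_2^{-1}\bou_{k-1}$; on the algebraic side this corresponds to $t^k f_k + t^{k+1}s\cdot f_{k-1}$, which is $f_{2k}$.

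The remaining task is to confirm that the dictionary between monomials in $R$ and bigraded suspensions of indecomposables in $\mc{D}_{A(2)_*}$ is consistent throughout the reductions, namely $s^i t^j\leftrightarrow \Sigma^{8j,i}v_2^{-1}\FF_2$, $s^i t^{j-1}x\leftrightarrow \Sigma^{8(j-1),i}v_2^{-1}\bou_1$, $s^i t^{j-2}x^2\leftrightarrow \Sigma^{8(j-2),i}v_2^{-1}\bp{2}$, and $s^k t^l y\leftrightarrow \Sigma^{8l,k}\T$. This follows because the defining relations of $R$ in (\ref{eq:R}) are forced respectively by Proposition~\ref{prop:split1} (the relation $x^3=2t^2 s x+t^3 s^2 y$), Proposition~\ref{prop:split2} (the relation $xy=t^3 s^3 y+t^5 s^6 y$), and $v_2^8$-periodicity (the relation $t^6 s^8=1$), so every reduction step in $R$ corresponds to a genuine equivalence of summands in $\mc{D}_{A(2)_*}$. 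There is no real obstacle here: the hard work has been done in Theorem~\ref{thm:partialj}, and this corollary amounts to unpacking that vanishing result through the splitting theorems of Section~\ref{sec:splitting} in the bookkeeping language of Section~\ref{sec:genfuncs}.
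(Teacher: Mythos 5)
Your proof is correct and takes essentially the same route the paper intends: the paper states this as an immediate corollary of Theorem~\ref{thm:partialj} combined with the generating-function formalism of Section~\ref{sec:genfuncs}, which is exactly what you unpack via the parity induction. The only cosmetic remark is that the dictionary between monomials of $R$ and summands in $\mc{D}_{A(2)_*}$ is already built into the definition of $R$ in~(\ref{eq:R}) and the encoding at the start of Section~\ref{sec:genfuncs}, so your final paragraph is confirming a convention rather than proving a new fact; but spelling it out does no harm.
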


\begin{cor}\label{cor:main}
Consider the element 
$$ h := tf_1w + t^2f_2w^2 + t^3 f_3 w^3 \cdots \in R[[w]]. $$
Write the coefficient of $w^j$ in $h^n$ as
$$ \sum_i (a^{(n)}_{i,j} s^i t^{2j} + b^{(n)}_{i,j}s^i t^{2j-1}x + c^{(n)}_{i,j}s^i t^{2j-2} x^2)
+\sum_{j,k,l}d^{(n)}_{k,l}s^k t^l y $$
then the weight $8j$ summand of $v_2^{-1}\br{\tmfu}^{\otimes n}$ decomposes as 
\begin{multline*} 
\bigoplus_i (a^{(n)}_{i,j} \Sigma^{16j,i}v_2^{-1}\FF_2 \oplus b^{(n)}_{i,j}\Sigma^{16j-8,i} v_2^{-1}\bou_1 \oplus c^{(n)}_{i,j}\Sigma^{16j-16,i}v_2^{-1}\bp{2}) \\
\oplus \bigoplus_{k,l} d^{(n)}_{j,k,l}\Sigma^{8l,k}\T. \end{multline*}
\end{cor}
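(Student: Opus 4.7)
The plan is to combine the weight splitting (\ref{eq:E1decomp}) with Corollary~\ref{cor:v2bo2j} and then read the result off the generating function $h$. By (\ref{eq:E1decomp}), the weight-$8j$ summand of $\br{\ul{\tmf}}^{\otimes n}$ is
$$ \bigoplus_{\substack{i_1 + \cdots + i_n = j \\ i_k > 0}} \Sigma^{8j}\,\bou_{i_1}\otimes\cdots\otimes\bou_{i_n}. $$
Corollary~\ref{cor:v2bo2j} writes each $v_2^{-1}\bou_{i_k}$ as a direct sum of bigraded suspensions indexed by the monomials of $f_{i_k}\in R$, under the dictionary sending $s^at^b$, $s^at^bx$, $s^at^bx^2$, $s^at^by$ to $\Sigma^{8b,a}v_2^{-1}\FF_2$, $\Sigma^{8b,a}v_2^{-1}\bou_1$, $\Sigma^{8b,a}v_2^{-1}\bp{2}$, $\Sigma^{8b,a}\T$, respectively; the suspension $\Sigma^{8i_k}$ corresponds to multiplication by $t^{i_k}$.

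The central observation is that the three relations defining $R$ in (\ref{eq:R}), together with $y^2 = y + s^{-1}y + t^2s^2y + t^4s^5y$, are precisely the splittings of Propositions~\ref{prop:split1} and~\ref{prop:split2}, the splitting of $\T^{\otimes 2}$ proved in Section~\ref{sec:splitting}, and $v_2^8$-periodicity. Consequently, multiplication in $R$ models tensor product of the indecomposable summands in $\mc{D}_{A(2)_*}$, and reducing any monomial to normal form (with $x$-degree at most $2$ and $y$-degree at most $1$) produces an explicit splitting. Thus the weight-$8j$ summand of $v_2^{-1}\br{\tmfu}^{\otimes n}$ is described by
$$ \sum_{i_1+\cdots+i_n=j}\prod_k t^{i_k}f_{i_k}\in R, $$
which by construction of $h$ is the coefficient of $w^j$ in $h^n$. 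Reducing term-by-term then produces the claimed direct-sum decomposition.

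The one nontrivial bookkeeping point, which I take to be the main obstacle, is verifying that the non-$y$ part of the reduced element carries exactly the $t$-exponents $2j$, $2j-1$, $2j-2$ claimed. The mechanism is that the relation $x^3 = 2t^2sx + t^3s^2y$ preserves the invariant ``$t$-exponent plus $x$-exponent'' on the non-$y$ part: both sides satisfy $t$-exp $+$ $x$-exp $= 3$. Every non-$y$ monomial of $t^{i_k}f_{i_k}$ already satisfies $t$-exp $+$ $x$-exp $= 2i_k$, so any non-$y$ product over the $i_k$ satisfies $t$-exp $+$ $x$-exp $= 2j$; after reducing the $x$-degree to $0$, $1$, or $2$ via the $x^3$-relation, the $t$-exponent is forced to $2j$, $2j-1$, or $2j-2$ respectively. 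The $y$-monomials, by contrast, acquire arbitrary $(s,t)$-degrees via the relations $xy = t^3s^3y + t^5s^6y$ and the $y^2$-relation, which accounts for the unconstrained sum $\sum_{k,l}d^{(n)}_{j,k,l}s^kt^ly$ appearing in the statement.
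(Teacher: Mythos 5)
Your proof is correct and matches the paper's intended (implicit) argument: the corollary follows immediately by combining the weight decomposition (\ref{eq:E1decomp}) with Corollary~\ref{cor:v2bo2j} and the observation that multiplication in $R$ encodes tensor products of the indecomposable summands via the defining relations. Your bookkeeping remark about the invariant ``$t$-exponent plus $x$-exponent'' on the non-$y$ part is a useful clarification (noting, as you tacitly do, that the relation $t^6s^8=1$ is never applied to the non-$y$ part during the normal-form reduction), but it is not a different approach.
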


\section{Applications to the $g$-local algebraic $\tmf$-resolution}\label{sec:BBT}

Consider the quotient Hopf algebra $C_* := \FF_2[\zeta_2]/(\zeta_2^4)$ of $A(2)_*$, with
$$ \pi^{C_*}_{*,*}(\FF_2) = \FF_2[v_1, h_{2,1}]. $$
The second author, Bobkova, and Thomas computed the $P_2^1$-Margolis homology of the tmf-resolution, and in the process computed the structure of $A \mmod A(2)^{\otimes n}_*$ as $C_*$-comodules.  From this one can read off the Ext groups
$$ h^{-1}_{2,1}\pi^{C_*}_{*,*}(\tmfu^{\otimes n}) $$
(see \cite[Thm.~3.12]{tmfhi}).

The groups $h^{-1}_{2,1}\pi^{C_*}_{*,*}$ are closely related to the groups $g^{-1}\piA_{*,*}$.  In \cite[Cor.~3.11]{tmfhi}, it is proven that for $M \in \mc{D}_{A(2)_*}$, there is a $v_2^8$ Bockstein spectral sequence
\begin{equation}\label{eq:P21ss}
h^{-1}_{2,1}\pi_{*,*}^{C_*}(M)\otimes \FF_2[v_2^8] \Rightarrow g^{-1}\piA_{*,*}(M).
\end{equation} 
In this section we would like to explain how Corollary~\ref{cor:glocal} can be used to compute $g^{-1}\piA_{*,*}(\tmfu^{\otimes n})$.  By relating this to \cite{BBT}, we will show that in the case of $M = \tmfu^{\otimes n}$, the spectral sequence (\ref{eq:P21ss}) collapses (Theorem~\ref{thm:glocal}).

We follow \cite{tmfhi} in our summary of the results of \cite{BBT}.
The coaction of $C_*$ is encoded in the dual action of the algebra $E[Q_1, P_2^1]$ on $\tmfu^{\otimes n}$.
Define elements
\begin{align*}
x_{i,j} & = 1\otimes  \cdots \otimes 1 \otimes \underbrace{\zeta_{i+3}}_{j} \otimes 1 \otimes \cdots \otimes 1,  \\
t_{i,j} & = 1\otimes \cdots \otimes 1 \otimes \underbrace{\zeta^{4}_{i+1}}_{j} \otimes 1 \otimes \cdots \otimes 1
\end{align*}
in $\tmfu^{\otimes n}$.  

%The weight filtration on $\tmfu$ induces a \emph{multi-weight filtration} on $A \mmod A(2)^{\otimes n}_*$ indexed by $n$-tuples of weights.
%The generators $x_{i,j}$ and $t_{i,j}$ have multi-weight
%$$ (0, \ldots ,0, \underbrace{2^{i+2}}_{j}, 0, \ldots, 0). $$
%For sets of multi-indices
%\begin{align*} 
%I & = \{ (i_1, j_1), \ldots, (i_k, j_k) \}, \\
%I' & = \{ (i'_1, j'_1), \ldots, (i'_{k'}, j'_{k'}) \}
%\end{align*}
%with $I \cap I' = \emptyset$,
%let 
%$$ x_I t_{I'} \in \tmfu $$
%denote the corresponding monomial. 
%The action of the algebra $E[Q_1, P_2^1]$ on the $\FF_2$-submodule of $A\mmod A(2)_*^{\otimes n}$ spanned by such monomials is given by
%\begin{align*}
%Q_1(x_{I}t_{I'}) & = \sum_\ell x_{I-\{(i_\ell, j_\ell)\}} t_{I' \cup \{(i_\ell, j_\ell)\}}, \\
%P_2^1(x_I t_{I'}) & = \sum_{\ell < \ell'} x_{I-\{(i_\ell,j_\ell),(i_{\ell'}, j_{\ell'})\}} t_{I' \cup \{(i_\ell,j_\ell),(i_{\ell'}, j_{\ell'})\}}. 
%\end{align*}
For an \emph{ordered} set 
$$ J = ((i_1, j_1), \ldots, (i_k, j_k)) $$  
of multi-indices, let
$$ \abs{J} := k $$
denote the number of pairs of indices it contains.
Define linearly independent sets of elements
$$ \mc{T}_J \subset \tmfu^{\otimes n} $$
inductively as follows.  Define
$$ \mc{T}_{(i,j)} = \{x_{i,j}\}. $$
For $J$ as above with $\abs{J}$ odd, define
\begin{align*}
\mc{T}_{J, (i,j)} & = \{ z \cdot x_{i,j} \}_{z \in \mc{T}_J}, \\
\mc{T}_{J, (i,j), (i',j')} & = \{ Q_1(z \cdot x_{i,j})x_{i',j'} \}_{z \in \mc{T}_J} \cup \{ Q_1(z \cdot x_{i',j'})x_{i,j} \}_{z \in \mc{T}_J}.
\end{align*} 
Let 
$$ N_J \subset \tmfu^{\otimes n} $$ 
denote the $\FF_2$-subspace with basis
$$ Q_1 \mc{T}_J := \{ Q_1(z) \}_{z \in \mc{T}_J}. $$
While the set $\mc{T}_J$ depends on the ordering of $J$, the subspace $N_J$ does not.

Finally, for a set of pairs of indices
$$ J = \{(i_1,j_1), \cdots , (i_k,j_k)\} $$ 
as before, define
$$ x_Jt_J := x_{i_1,j_1}t_{i_1,j_1} \cdot \cdots \cdot x_{i_k,j_k}t_{i_k,j_k}. $$

The following can be read off of the computations of \cite{BBT}.

\begin{thm}[Bhattacharya-Bobkova-Thomas]\label{thm:BBT}
As modules over $\FF_2[h^\pm_{2,1}, v_1]$, we have
\begin{multline*} 
h_{2,1}^{-1}\pi^{C_*}_{*,*}(\tmfu_*^{\otimes n}) = \\
\FF_2[h_{2,1}^{\pm}] \otimes \bigg( \FF_2[v_1]\{x_{J'}t_{J'}\}_{J'} \oplus 
\bigoplus_{\abs{J} \: \mr{odd}} N_J\{x_{J'}t_{J'}\}_{J \cap J' = \emptyset} 
\\
\oplus \bigoplus_{\abs{J} \ne 0 \: \mr{even}}
\FF_2[v_1]/v_1^2 \otimes N_J\{x_{J'}t_{J'}\}_{J \cap J' = \emptyset} \bigg)
\end{multline*}
where $J$ and $J'$ range over the subsets of
$$ \{ (i,j) \: : \: 1 \le i, 1 \le j \le n \} $$ 
and $v_1$ acts trivially on $N_J$ for $\abs{J}$ odd.
\end{thm}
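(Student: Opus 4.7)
The approach is to translate the main computations of Bhattacharya-Bobkova-Thomas into the stated form, using the identification between $h^{-1}_{2,1}$-localized $C_*$-Ext and $P_2^1$-Margolis homology. Since $C_* = \FF_2[\zeta_2]/\zeta_2^4$ has $\pi^{C_*}_{*,*}(\FF_2) = \FF_2[v_1, h_{2,1}]$ with $h_{2,1}$ detected by the element dual to $P_2^1$, inverting $h_{2,1}$ reduces the problem to computing the $P_2^1$-Margolis homology $H(\tmfu^{\otimes n}; P_2^1)$ together with its residual $Q_1$- and $v_1$-actions.

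Next, I would invoke the BBT basis theorem: their work gives an explicit $\FF_2$-basis for this Margolis homology organized by pairs $(J, J')$ of disjoint subsets of the index set $\{(i,j) \: : \: 1 \le i,\: 1 \le j \le n\}$. The subset $J'$ indexes the ``polynomial'' generators $x_{J'} t_{J'}$, while $J$ indexes classes assembled via iterated $Q_1$-compositions, packaged by the sets $\mc{T}_J$ and $N_J = Q_1 \mc{T}_J$. The recursion $\mc{T}_{J,(i,j),(i',j')} = \{Q_1(z \cdot x_{i,j})x_{i',j'}\} \cup \{Q_1(z \cdot x_{i',j'})x_{i,j}\}$ is engineered so that exactly those iterated compositions which survive $P_2^1$-homology are enumerated; this is precisely what \cite{BBT} establish via a case analysis on the interaction between $P_2^1$-action and the tensor structure on $\tmfu^{\otimes n}$.

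With the basis in hand, I would determine the $v_1$-module structure by direct inspection of how $v_1$ acts on each basis class. The three cases in the theorem correspond to three parity behaviors of $\abs{J}$: when $J = \emptyset$, the class $x_{J'} t_{J'}$ is $v_1$-free, producing the $\FF_2[v_1]\{x_{J'}t_{J'}\}$ summand; when $\abs{J}$ is odd, the presence of a single unpaired $Q_1$ annihilates all $v_1$-multiples, yielding the trivial $v_1$-action on $N_J\{x_{J'}t_{J'}\}$; and when $\abs{J}$ is even and nonzero, paired $Q_1$'s produce nontrivial $v_1$-multiplication but with $v_1^2 = 0$, giving the $\FF_2[v_1]/v_1^2$-module factor.

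The main obstacle is the combinatorial bookkeeping: one must check that the recursive construction of $\mc{T}_J$ produces exactly the basis enumerated by pairs of disjoint subsets $J, J'$, with no overcounting or omissions, and verify the parity-based $v_1$-action pattern on each class individually. Since \cite{BBT} carry out the essential $P_2^1$-Margolis calculations in detail, the remaining content here is reorganizing their output into the clean module-theoretic form above; the verification reduces to matching indexing conventions and confirming the $v_1$-torsion heights on the explicit generators produced by the BBT recursion.
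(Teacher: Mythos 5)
Your proposal matches the paper's treatment: the paper offers no proof beyond the assertion that the theorem ``can be read off of the computations of \cite{BBT},'' following the summary in \cite{tmfhi}, and your sketch is precisely this translation — identifying $h_{2,1}^{-1}$-localized $C_*$-Ext with $P_2^1$-Margolis homology, invoking the BBT basis indexed by disjoint pairs $(J, J')$, and matching the $v_1$-torsion heights to the parity of $\abs{J}$. This is the same route, fleshed out slightly more than the paper bothers to.
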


We now explain how the equivalences
\begin{align*}
 g^{-1} \bou_{2j} & \simeq  \Sigma^{8j} g^{-1}\bou_j \oplus \Sigma^{8j+8,1}g^{-1}\bou_{j-1}, \\
 g^{-1} \bou_{2j+1} & \simeq \Sigma^{8j} g^{-1} \bou_j \otimes \bou_1 
\end{align*}
are related to Theorem~\ref{thm:BBT}.  This analysis comes from the definitions of the maps in the exact sequences (\ref{eq:boSES1}) and (\ref{eq:boSES2}).  The definitions of these maps are give in \cite[Sec.~7]{BHHM}.
For a set $J$ of indices of the form
$$ J = \{ (i_1,1), \cdots, (i_k,1) \}, $$
define $J+\Delta$ to be the set
$$ J+\Delta = \{ (i_1+1,1), \cdots, (i_k+1,1) \}. $$
Then the induced maps on homotopy are determined by:
\begin{align*}
\piA_{*,*}(\Sigma^{8j} g^{-1}\bou_j) & \to \piA_{*,*}(g^{-1}\bou_{2j}) \\
 N_{J}\{x_{J'}t_{J'}\} & \mapsto N_{J+\Delta}\{x_{J'+\Delta}t_{J'+\Delta}\} \\
 \\
 \piA_{*,*}(\Sigma^{8j+8,1} g^{-1}\bou_{j-1} & 
 \to \piA_{*,*}(g^{-1}\bou_{2j}) \\
 N_{J}\{x_{J'}t_{J'}\} & \mapsto h_{2,1} \cdot N_{J+\Delta}\{x_{1,1}t_{1,1}x_{J'+\Delta}t_{J'+\Delta}\} \\
 \\
 \piA_{*,*}(\Sigma^{8j} g^{-1}\bou_j \otimes \bou_1) & = \piA_{*,*}(g^{-1}\bou_{2j+1}) \\
 N_{J \cup \{(1,2)\}}\{x_{J'}t_{J'}\} & \mapsto N_{(J+\Delta) \cup \{(1,1)\}}\{x_{J'+\Delta}t_{J'+\Delta}\}.
\end{align*}
We have (with $g = h^4_{2,1}$)
\begin{align*}
\piA_{*,*}(g^{-1}\FF_2) & = \FF_2[h^{\pm}_{2,1}, v_1, v_2^8], \\
\piA_{*,*}(g^{-1}\bou_1) & = \FF_2[h^{\pm}_{2,1}, v_1, v_2^8]/(v_1) \{t_{1,1}\}, \\
\piA_{*,*}(g^{-1}\bp{2}) & = \FF_2[h^{\pm}_{2,1}, v_1, v_2^8]/(v_1^2) \{Q_1(x_{1,1}x_{1,2})\}.
\end{align*}

Corollary~\ref{cor:glocal} therefore implies the following extension of Theorem~\ref{thm:BBT}.

\begin{thm}\label{thm:glocal}
As modules over $\FF_2[h^\pm_{2,1}, v_1, v_2^8]$, we have
\begin{multline*} 
g^{-1}\piA_{*,*}(\tmfu_*^{\otimes n}) = \\
\FF_2[h_{2,1}^{\pm}, v_2^8] \otimes \bigg( \FF_2[v_1]\{x_{J'}t_{J'}\}_{J'} \oplus 
\bigoplus_{\abs{J} \: \mr{odd}} N_J\{x_{J'}t_{J'}\}_{J \cap J' = \emptyset} 
\\
\oplus \bigoplus_{\abs{J} \ne 0 \: \mr{even}}
\FF_2[v_1]/v_1^2 \otimes N_J\{x_{J'}t_{J'}\}_{J \cap J' = \emptyset} \bigg)
\end{multline*}
where $J$ and $J'$ range over the subsets of
$$ \{ (i,j) \: : \: 1 \le i, 1 \le j \le n \} $$ 
and $v_1$ acts trivially on $N_J$ for $\abs{J}$ odd.
\end{thm}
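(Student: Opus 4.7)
The plan is to compute $g^{-1}\piA_{*,*}(\tmfu^{\otimes n})$ directly using the splittings already established, and then to observe that the answer coincides with Theorem~\ref{thm:BBT} tensored with $\FF_2[v_2^8]$. This simultaneously yields the stated formula and shows that the $v_2^8$-Bockstein spectral sequence (\ref{eq:P21ss}) collapses for $M = \tmfu^{\otimes n}$.

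First, I would combine the $A(2)_*$-comodule splitting (\ref{eq:E1decomp}) with Corollary~\ref{cor:glocal} and the equivalence (\ref{glocal2}), applied inductively, to decompose $g^{-1}\tmfu^{\otimes n}$ in $\mc{D}_{A(2)_*}$ as a direct sum of bigraded suspensions of the three objects $g^{-1}\FF_2$, $g^{-1}\bou_1$, and $g^{-1}\bp{2}$. Applying $\piA_{*,*}$ and using the three explicit module computations recorded just before the theorem statement then describes $g^{-1}\piA_{*,*}(\tmfu^{\otimes n})$ as an abstract direct sum of $\FF_2[h_{2,1}^{\pm}, v_1, v_2^8]$-modules whose summands are either $v_1$-torsion-free, annihilated by $v_1$, or annihilated by $v_1^2$.

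Next, I would match this abstract decomposition with the basis $\{x_{J'}t_{J'}\}$, $\{N_J\{x_{J'}t_{J'}\}\}$ of Theorem~\ref{thm:BBT}, using the explicit formulas for the induced maps on homotopy displayed just before the statement. Those formulas describe how each application of the splitting of $g^{-1}\bou_{2j}$ or the equivalence $g^{-1}\bou_{2j+1} \simeq \Sigma^{8j}g^{-1}\bou_j \otimes \bou_1$ acts on the indexing data by a shift $J \mapsto J+\Delta$ together with the insertion of either a factor $x_{1,1}t_{1,1}$ or a $Q_1$-linked pair. Tracking this induction should identify the $g^{-1}\FF_2$-summands with the monomials $x_{J'}t_{J'}$, the $g^{-1}\bou_1$-summands with $N_J\{x_{J'}t_{J'}\}$ for $\abs{J}$ odd, and the $g^{-1}\bp{2}$-summands with $N_J\{x_{J'}t_{J'}\}$ for $\abs{J}$ nonzero and even, with the disjointness condition $J \cap J' = \emptyset$ arising naturally at each step from the distinction between the two sources $\bou_{2j}$ and $\bou_{2j+1}$.

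The main obstacle will be the combinatorial bookkeeping in the previous step: verifying that the parity of $\abs{J}$ is preserved by each splitting and that the resulting indexing coincides precisely with that of \cite{BBT}. Once the identification is made, the $v_1$-torsion pattern on the three basic building blocks matches the three families appearing in Theorem~\ref{thm:BBT} term by term, and tensoring with $\FF_2[h_{2,1}^{\pm}, v_2^8]$ yields the displayed formula. In particular, the resulting module has exactly the $\FF_2$-dimension of $h_{2,1}^{-1}\pi^{C_*}_{*,*}(\tmfu^{\otimes n}) \otimes \FF_2[v_2^8]$ in each bidegree, so the Bockstein spectral sequence (\ref{eq:P21ss}) has no room for differentials and must collapse.
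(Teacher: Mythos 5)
Your plan follows the paper's argument: decompose $g^{-1}\tmfu^{\otimes n}$ into bigraded suspensions of $g^{-1}\FF_2$, $g^{-1}\bou_1$, $g^{-1}\bp{2}$ via Corollary~\ref{cor:glocal} and (\ref{glocal2}), match the summands with the Bhattacharya--Bobkova--Thomas basis using the displayed formulas for the induced maps $N_J\{x_{J'}t_{J'}\} \mapsto N_{J+\Delta}\{\cdots\}$, apply the known $\piA_{*,*}$ of the three building blocks, and read off collapse of the Bockstein spectral sequence (\ref{eq:P21ss}) as a byproduct. One small slip: you invoke (\ref{eq:E1decomp}), which is the splitting of $\br{\tmfu}^{\otimes n}$ with all $i_k > 0$; for $\tmfu^{\otimes n}$ you want the $n$-fold tensor power of (\ref{eq:tmfsplitting}), which allows $i_k = 0$ as well.
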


\appendix

\section{Charts for $\pi^{A(2)_*}_{*,*} (\bou_1^{\otimes k})$ for $0
\le k \le 4$ and $\pi^{A(2)_*}_{*,*}(\T)$.}\label{apx:charts}

This appendix contains the charts for the homotopy groups of the various fundamental components of the $v_2$-local algebraic $\tmf$-resolution.

\begin{figure}[ph!]
\includegraphics[angle = 90, origin=c, height =.7\textheight]{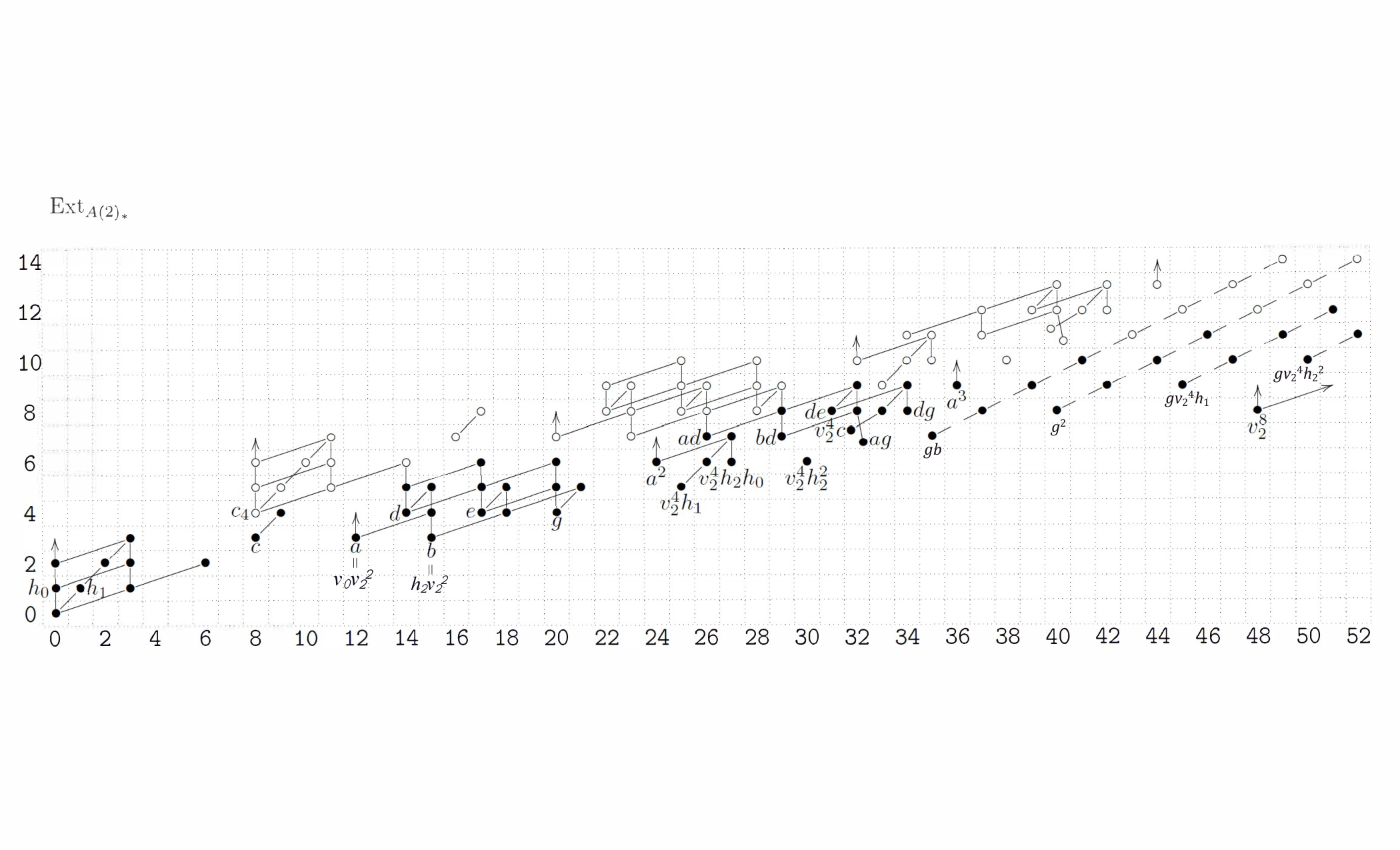}
\caption{$\pi^{A(2)_*}_{*,*}(\FF_2)$.}\label{fig:ExtA2}
\end{figure}

\afterpage{
\clearpage
\begin{figure}
\includegraphics[angle = 90, origin=c, height =.7\textheight]{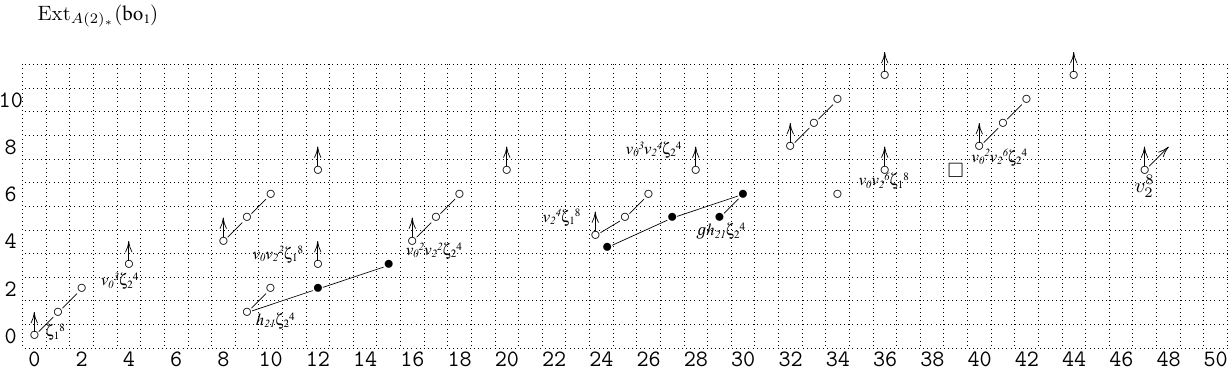}
\caption{$\pi^{A(2)_*}_{*,*}(\bou_1)$.}\label{fig:bo1}
\end{figure}
}

\afterpage{
\clearpage
\begin{figure}
\includegraphics[angle = 90, origin=c, height =.7\textheight]{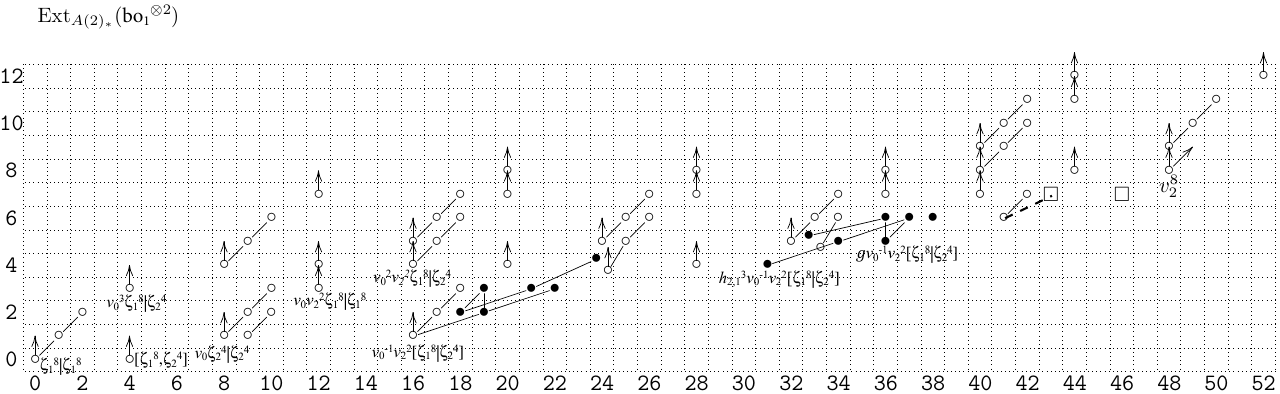}
\caption{$\pi_{*,*}^{A(2)_*}(\bou_1^{\otimes 2})$.}\label{fig:bo1^2}
\end{figure}
}

\afterpage{
\clearpage
\begin{figure}
\includegraphics[angle = 90, origin=c, height =.7\textheight]{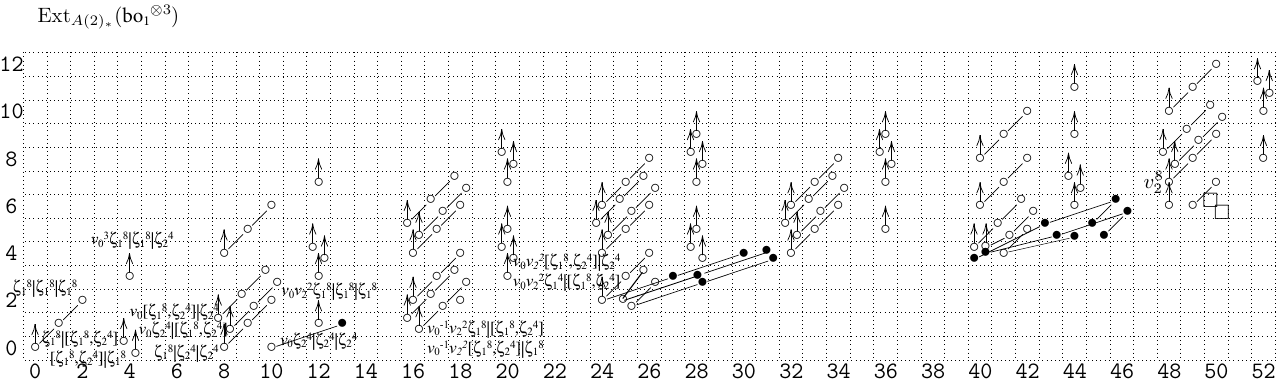}
\caption{$\pi_{*,*}^{A(2)_*}(\bou_1^{\otimes 3})$.}\label{fig:bo1^3}
\end{figure}
}

\afterpage{
\clearpage
\begin{figure}
\includegraphics[angle = 90, origin=c, height =.6\textheight]{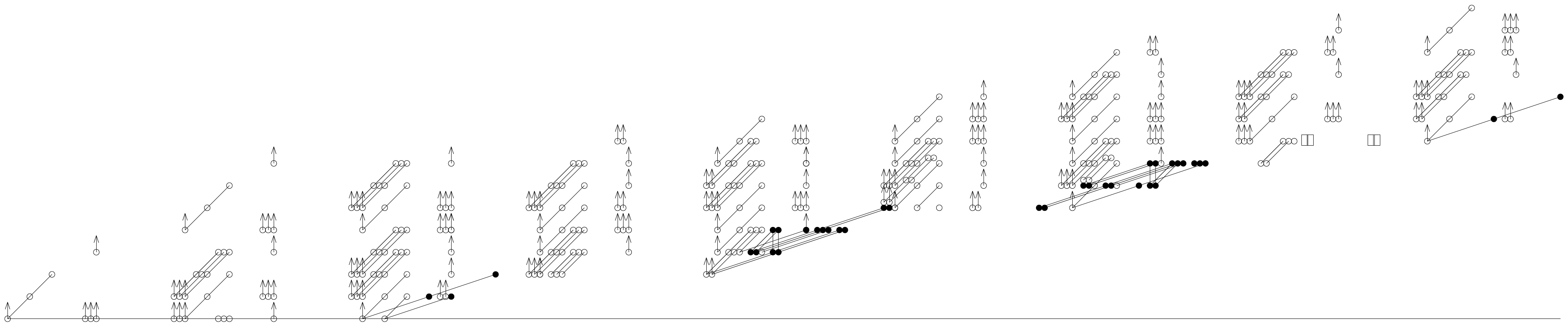}
\caption{$\pi_{*,*}^{A(2)_*}(\bou_1^{\otimes 4})$.}\label{fig:bo1^4}
\end{figure}
}

\afterpage{
\clearpage
\begin{figure}
\includegraphics[angle = 90, origin=c, height =.6\textheight]{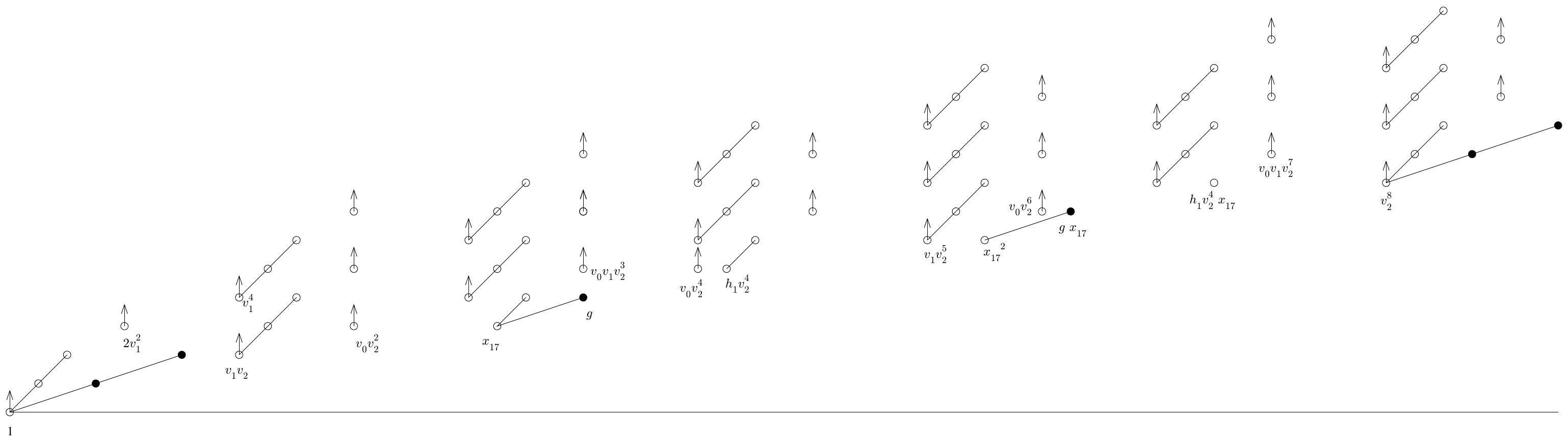}
\caption{$\pi_{*,*}^{A(2)_*}(\T)$.}\label{fig:TMF3}
\end{figure}
}

\afterpage{\clearpage}

\section{A splitting of $\bo_1^{\s 3}$ } \label{appendix}
The $v_2$-local splitting of Proposition~\ref{prop:split1} comes from a stable splitting of $\bo_1^{\s 3}$ induced by an idempotent decomposition of the identity element 
 \[ 1 = {\sf f}_1 + {\sf f}_2 + {\sf e} \in \mathbb{Z}_{(2)}[\Sigma_3]\] 
 as described in Remark~\ref{idempotent}.
\begin{figure}[h] %\label{picdeg}
\begin{small}
\begin{verbatim}
20

0 2 3 4 6 6 7 7 8 9 9 10 10 11 12 13 13 14 15 16

\end{verbatim}
\end{small} 
\begin{subfigure}{.33\textwidth}
\begin{small}
\begin{verbatim}
0 2 1 1
0 3 1 2 
0 4 1 3
0 6 1 4
0 7 1 6

1 1 1 2
1 4 1 5
1 5 1 7
1 6 1 8
1 7 1 9

2 4 1 7 
2 6 1 10
2 7 1 12

3 2 1 4
3 3 1 6
3 4 1 8
3 5 1 9
3 6 1 12

4 1 1 6
4 4 1 11
4 5 1 13
4 6 1 14

\end{verbatim}
\end{small}
\end{subfigure}
\begin{subfigure}{.33\textwidth}
\begin{small}
\begin{verbatim}

4 7 1 15

5 1 1 7
5 2 1 8
5 3 1 9
5 4 1 12

6 2 1 9
6 4 1 13
6 6 1 16
6 7 1 17

7 2 1 10
7 3 1 12

8 1 1 9
8 2 1 12
8 4 1 14
8 5 1 15
8 6 1 17

9 4 1 15
9 6 1 18
9 7 1 19

10 1 1 12

\end{verbatim}
\end{small}
\end{subfigure}
\begin{subfigure}{.32\textwidth}
\begin{small}
\begin{verbatim}

10 4 1 16
10 5 1 17

11 1 1 13
11 2 1 14
11 3 1 15
11 4 1 17

12 4 1 17
12 6 1 19

13 2 1 16
13 3 1 17
13 4 1 18
13 5 1 19

14 1 1 15
14 2 1 17

15 2 1 18
15 3 1 19

16 1 1 17

17 2 1 19
18 1 1 19


\end{verbatim}
\end{small}
\end{subfigure}
\caption{The $A(2)$-module structure of $H^*(F_1) \cong H^*(F_2)$ as an input file for Bruner's program} \label{BrunerF}
\end{figure}
 More precisely, if we set  
 \[ F_i := \hocolim \{ \bo_1^{\s 3 } \overset{{\sf f}_i}{\longrightarrow} \bo_1^{\s 3 } \overset{{\sf f}_i}{\longrightarrow} \dots  \}
 \]  for $i \in \{1, 2\}$  and \[ E :=  \hocolim \{ \bo_1^{\s 3 } \overset{{\sf e}}{\longrightarrow} \bo_1^{\s 3 } \overset{{\sf e}}{\longrightarrow} \dots  \}
,\] 
using the evident permutation action of $\Sigma_3$ on $\bo_1^{\s 3}$, then it is easy to see that 
\begin{equation} \label{splitbo1^3}
\bo_1^{\s 3} \simeq F_1 \vee F_2 \vee E. 
\end{equation}
 In fact, $F_1$, $F_2$ and $E$ are finite spectra and their mod $2$ cohomology as a Steenrod module can be easily computed  using  the cocommutativity of Steenrod operations and a K\"unneth isomorphism (see \cite[Appendix C]{Orange}).  For the purposes of this paper, we only need their underlying $A(2)$-module structure which we record in the format of a Bruner module definition file \cite[Apx.~A]{BEM} (see Figure~\ref{BrunerF} and Figure~\ref{BrunerE})
  \begin{rmk} \label{idempotent} In the group ring  $\mathbb{Z}_{(2)}[\Sigma_3]$, the identity element $1$ can be written as a sum of idempotent elements 
\[   {\sf f}_1 = \frac{1 + (1 \ 2) - (1\ 3) - (1 \ 2 \ 3)}{3},{\sf f}_2 = \frac{1 + (1 \ 3) - (1\ 2) - (1 \ 3 \ 2)}{3} \text{ and }  \]
\[ {\sf e} =\frac{ 1 + (1\ 2 \ 3) + (1\ 3 \ 2)}{3}  .\]
\end{rmk}
 \begin{figure}[h] %\label{picdeg}
 \begin{small}
\begin{verbatim}
24

0 4 6 7 8 10 10 11 11 12 12 13 13 14 14 15 16 17 17 18 18 19 20 21 

\end{verbatim}
\end{small}

 \begin{subfigure}{.33\textwidth}
\begin{small}
\begin{verbatim}

0 4 1 1 
0 6 1 2
0 7 1 3

1 2 1 2
1 3 1 3

2 1 1 3
2 4 2 5 6
2 5 2 7 8 

3 4 2 7 8
3 6 2 11 12

4 2 2 5 6
4 3 2 7 8
4 4 2 9 10
4 5 2 11 12
4 6 2 13 14
4 7 1 15

5 1 1 7
5 2 1 10
5 3 2 11 12
5 4 2 13 14
5 5 1 15

6 1 1 8
6 2 1 10
6 3 2 11 12 
6 4 2 13 14
6 5 1 15

7 2 1 11
7 3 1 14
7 4 1 15
\end{verbatim}
\end{small}
\end{subfigure}
\begin{subfigure}{.32 \textwidth}
\begin{small}
\begin{verbatim}
7 6 2 17 18

8 2 1 12
8 3 1 14
8 4 1 15
8 6 2 17 18

9 2 1 13
9 3 1 15
9 4 1 16
9 5 2 17 18
9 6 2 19 20
9 7 1 21

10 1 2 11 12
10 2 1 14
10 4 1 16
10 5 2 17 18
10 6 2 19 20
10 7 1 21

11 1 1 14
11 4 1 17
11 5 1 20
11 6 1 21

12 1 1 14
12 4 1 18
12 5 1 20
12 6 1 21


13 1 1 15
13 4 1 19
13 5 1 21
\end{verbatim}
\end{small}
\end{subfigure}
\begin{subfigure}{.33 \textwidth}
\begin{small}
\begin{verbatim}
13 6 1 22
13 7 1 23

14 4 1 20
14 6 1 22
14 7 1 23

15 2 2 17 18
15 4 1 21
15 6 1 23

16 1 2 17 18
16 2 2 19 20
16 3 1 21
16 4 1 22
16 5 1 23

17 1 1 20
17 2 1 21
17 4 1 23

18 1 1 20
18 2 1 21
18 4 1 23

19 1 1 21
19 2 1 22 
19 3 1 23

20 2 1 22
20 3 1 23

21 2 1 23

22 1 1 23
\end{verbatim}
\end{small}
\end{subfigure}
\caption{The $A(2)$-module structure of $H^*(E)$ as an input file for Bruner's program} \label{BrunerE}
\end{figure}
\begin{rmk} \label{F1=F2} Note that ${\sf f}_1$ and ${\sf f}_2$ are conjugates and therefore, $F_1\simeq F_2$.  
\end{rmk} 
 Bruner's program is capable of computing the action of $\pi_{*,*}^{A(2)_*} (\mathbb{F}_2)$ on $\pi_{*,*}^{A(2)_*} (M^{\vee})$, where $M^{\vee}$ is the $\FF_2$-linear dual of a finite $A(2)$-module $M$. Therefore, it can be used for verifying the details necessary in the proof of Proposition~\ref{prop:split1} and Proposition~\ref{prop:split2}.  
  \begin{rmk} \label{R2}
Using Bruner's program and  Figure~\ref{fig:TMF3}  one can easily verify 
\[ 
 v_2^{-1} \pi_{*,*}^{A(2)_*}(H_*(E)) \cong \pi_{*,*}^{A(2)_*}(\Sigma^{24,2} \T).
\]
Then by Theorem \ref{thm:recprinc} we get 
$\Sigma^{24,2}\T \simeq v_2^{-1} H_*(E)$ in 
$\mc{D}_{A(2)_*}$. 
\end{rmk}

\begin{rmk}[A different proof of Proposition~\ref{prop:split1}] 
Let $M_1$ denote the first integral Brown-Gitler module. It consists of three $\mathbb{F}_2$-generators $\{ x_0, x_2, x_3 \}$ where $|x_i| =i$ such that 
\[ \text{ $\mathit{Sq}^2(x_0)= x_2$ and $\mathit{Sq}^1(x_2)= x_3$.} \]
  It is tedious but straightforward to check that there is a short exact sequence 
\[ 
0 \to H^*(\Sigma^{17}\bo_1)\longrightarrow \Sigma^4 A(2) \mmod A(1) \otimes  M_1 \longrightarrow H^*E \to 0
\]
of $A(2)$-modules. This short exact sequence translates into an $\mc{D}_{A(2)_*}$-equivalence 
\[ v_2^{-1}H_*(F_1) \cong H_*(F_2)  \simeq  \Sigma^{16,1} v_2^{-1}\bou_1 \]
  which, along with Remark~\ref{R2} and  \eqref{splitbo1^3}, gives yet another proof of Proposition~\ref{prop:split1}. 
\end{rmk}

\bibliographystyle{amsalpha}
\bibliography{algTMFres}
\end{document}